\DeclareSymbolFont{cyrletters}{OT2}{wncyr}{m}{n}
\DeclareMathSymbol{\Sha}{\mathalpha}{cyrletters}{"58}
\DeclareMathAlphabet{\mathpzc}{OT1}{pzc}{m}{it}
\title[]
{On properties of $\adari(\pal)$ and $\ganit_v(\pic)$\\
}
\author{Nao Komiyama}
\address{Graduate School of Mathematics, Nagoya University, 
Furo-cho, Chikusa-ku, Nagoya 464-8602 Japan }
\email{m15027u@math.nagoya-u.ac.jp}
\thanks{}
\date{\today}
\newtheorem{thm}{Theorem}[section]
\newtheorem{lem}[thm]{Lemma}
\newtheorem{cor}[thm]{Corollary}
\newtheorem{prop}[thm]{Proposition}  
\theoremstyle{remark}
\subjclass[2010]{}
\keywords{}
\numberwithin{equation}{section}
\theoremstyle{definition}
\newtheorem{definition}[thm]{Definition}
\newtheorem{remark}[thm]{Remark}
\newtheorem{notation}[thm]{Notation}     
\newtheorem{example}[thm]{Examples}
\newcommand{\vecv}{{\bf v}}
\newcommand{\vecu}{{\bf u}}
\newcommand{\varia}[2]{({}^{#1}_{#2})}
\newcommand{\id}{{\rm id}}
\newcommand{\N}{{\mathbb N}}
\newcommand{\Z}{{\mathbb Z}}
\newcommand{\Q}{{\mathbb Q}}
\newcommand{\C}{{\mathbb C}}
\newcommand{\Sh}[3]{{\rm Sh}\binom{#1;#2}{#3}}
\newcommand{\Shstar}[3]{{\rm Sh}_*\binom{#1;#2}{#3}}
\newcommand{\shmap}{\mathpzc{Sh}}
\newcommand{\adari}{{\rm adari}}
\newcommand{\adgari}{{\rm adgari}}
\newcommand{\sh}{{\rm sh}}
\newcommand{\Bf}[1]{\mbox{\boldmath {#1}}}
\newcommand{\BIMU}{{\rm BIMU}}
\newcommand{\ARI}{{\rm ARI}}
\newcommand{\al}{{\rm al}}
\newcommand{\il}{{\rm il}}
\newcommand{\GARI}{{\rm GARI}}
\newcommand{\as}{{\rm as}}
\newcommand{\is}{{\rm is}}
\newcommand{\DIMU}{{\rm DIMU}}
\newcommand{\arit}{{\rm arit}}
\newcommand{\ari}{{\rm ari}}
\newcommand{\preari}{{\rm preari}}
\newcommand{\expari}{{\rm expari}}
\newcommand{\ganit}{{\rm ganit}}
\newcommand{\anti}{{\rm anti}}
\newcommand{\pari}{{\rm pari}}
\newcommand{\ulflex}[2]{{#1}\rceil_{#2}}
\newcommand{\lrflex}[2]{{}_{#1}\lfloor{#2}}
\newcommand{\pal}{{\rm pal}}
\newcommand{\paj}{{\rm paj}}
\newcommand{\pij}{{\rm pij}}
\newcommand{\pic}{{\rm pic}}
\newcommand{\poc}{{\rm poc}}
\begin{document}
\bibliographystyle{amsalpha+}
\maketitle

\begin{abstract}
The paper discusses properties of $\adari(\pal)$ and $\ganit_v(\pic)$ which are Ecalle's maps among certain sets of moulds related to the double shuffle relations of MZVs.
We give self-contained proof of their basic properties which are exhibited in Ecalle's papers and partially proved in Schneps' paper.
\end{abstract}

\tableofcontents
\setcounter{section}{-1}
\section{Introduction}

The \textit{multiple zeta value} (MZV for short) is a power series defined by
\begin{equation*}\label{eqn:def of MZV}
\zeta(k_1,\dots,k_r)=\sum_{0<m_1<\cdots<m_r}\frac{1}{m_1^{k_1} \cdots m_r^{k_r}},
\end{equation*}
for $k_1,\dots,k_r\in\N$ with $k_r\geq2$.
It is known that MZVs satisfy the \textit{double shuffle relations} which consist of the \textit{shuffle relations} and the \textit{harmonic relations}.
To deal with these relations systematically, the generating functions of MZVs are considered in \cite{E-ARIGARI}, \cite{E-flex}, \cite{G}, \cite{G-MPL} and \cite{IKZ}.
In these papers, the shuffle relations (resp. the harmonic relations) are lifted to the relations between the generating functions which Ecalle (\cite{E-ARIGARI}, \cite{E-flex}) calls \textit{symmetral} (resp. \textit{symmetril}) relations.\footnote{See Definition \ref{def:al,il,as,is} for detail.}
To study properties of MZVs, Ecalle introduces some groups and some Lie algebras of moulds  related with these lifted relations, and he reveals various properties of them.
Our final aim is to understand the following diagram (though which is not attained in this paper) which is displayed without proof in \cite[\S 4.7]{E-flex}:
$$
\xymatrix{
\GARI(\Gamma)_{\underline{\as}*\underline{\as}} \ar[rr]^{\adgari(\pal)} & & \GARI(\Gamma)_{\underline{\as}*\underline{\is}}  \\
\ARI(\Gamma)_{\underline{\al}*\underline{\al}} \ar[rr]_{\adari(\pal)} \ar@{->}[u]^{\expari}& & \ARI(\Gamma)_{\underline{\al}*\underline{\il}} \ar@{->}[u]_{\expari}
}
$$ 
Related to the above diagram, in \cite{SS} and \cite{S-ARIGARI}, the following property of the above map $\adari(\pal)$ is discussed.\\

\noindent
\smallskip
{\bf Theorem \ref{thm:adari(pal) induce bijection}.} (\cite[Theorem 7.2]{SS}, \cite[Theorem 4.6.1]{S-ARIGARI}) {\it The Lie algebra automorphism $\adari(\pal)$ on $\ARI(\Gamma)$ induces a bijection $\adari(\pal):\ARI(\Gamma)_{\underline{\al}*\underline{\al}} \longrightarrow \ARI(\Gamma)_{\underline{\al}*\underline{\il}}$.}\\
\smallskip

In \cite{S-ARIGARI}, the above theorem is deduced from the following.\\

\noindent
\smallskip
{\bf Theorem \ref{thm: ganit(B) is automorphism between al and il or as and is}.} (cf. \cite[Proposition 6.2]{SS}, \cite[Lemma 4.4.2]{S-ARIGARI}) {\it The map $\ganit_v(\pic)$ induces a group isomorphism from $(\overline{\GARI}(\Gamma)_\as,\times)$ to $(\overline{\GARI}(\Gamma)_\is,\times)$ and induces a Lie algebra isomorphism from $(\overline{\ARI}(\Gamma)_\al,[,])$ to $(\overline{\ARI}(\Gamma)_\il,[,])$.}\\
\smallskip

However, the proof of this statement is not fully presented.\footnote{In precise, in \cite[proof of Proposition 6.2]{SS} only the case of depth 4 is shown, and in \cite[proof of Lemma 4.4.2]{S-ARIGARI} ``A straightforward calculation'' (in \cite[the end of page 55]{S-ARIGARI}) is not straightforward at all at least for the author. We note that this issue of the calculation is justified in Remark \ref{rem: inverse map of ganit(pic)}.}
This paper gives self-contained proof of this theorem by clarifying several compatibilities of the maps $\adari(\pal)$ and $\ganit_v(\pic)$ including Theorem \ref{thm:exp(ARIal)=GARIas etc}, Corollary \ref{cor:commutative diagram} and Theorem \ref{thm: ganit(B) is automorphism between al and il or as and is}.
By using these corollary and theorems, we obtain the following commutative diagram in \cite[\S 4.7]{E-flex}. \\

\noindent
\smallskip
{\bf Corollary \ref{cor:commutative diagram of al,il,as,is}.} (\cite[\S 4.7]{E-flex}) {\it The following diagram commutes:}
$$
\xymatrix{
\overline{\GARI}(\Gamma)_\as \ar[rr]^{\ganit_v(\pic)} & & \overline{\GARI}(\Gamma)_\is  \\
\overline{\ARI}(\Gamma)_\al \ar[rr]_{\ganit_v(\pic)} \ar@{->}[u]^{\exp_\times}& & \overline{\ARI}(\Gamma)_\il \ar@{->}[u]_{\exp_\times}
}
$$
\smallskip

The construction of this paper is as follows.
In \S \ref{sec:Mould}, we review the definition (Definition \ref{def:mould}) of moulds introduced in \cite{E-ARIGARI} and \cite{E-flex} and we explain several fundamental properties (Definition \ref{def:al,il,as,is}) of moulds.
We also give some examples (Example \ref{ex:al,il,as,is mould}) of moulds, which satisfies the above properties.
In \S \ref{sec:Exponential map}, we recall the definition (Definition \ref{def:2.2.1}) of dimoulds introduced in \cite{Sau}, and by using these, we will show Theorem \ref{thm:exp(ARIal)=GARIas etc}, a correspondence result of certain sets of moulds.
In \S \ref{sec:Automorphism}, we review the definition (Definition \ref{def:ganit}) of the map $\ganit_v(B)$ introduced in \cite[(2.36)]{E-flex}, and we prove Theorem \ref{thm: ganit(B) is automorphism}, that is, $\ganit_v(B)$ forms an automorphism on $\overline{\GARI}(\Gamma)$ and $\overline{\ARI}(\Gamma)$.
In \S \ref{subsec:Algebraic preparation}, we introduce elements $g_B(\vecv_r) \in \mathcal K\langle V_\Z \rangle$ (Definition \ref{def:definition of map g}) as an analogue of $\ganit_v(B)$.
In \S \ref{subsec:map between alternal and alternil}, we prove the recurrence formula (Proposition \ref{prop:recurrence formula of map g}) of $g_B(\vecv_r)$ for $B=\pic$.
By using this recurrence formula, we prove Theorem \ref{thm: ganit(B) is automorphism between al and il or as and is}, $\ganit_v(\pic)$ induces certain isomorphisms.
As a corollary, we obtain compatibilities of $\ganit_v(\pic)$ with exponential maps (Corollary \ref{cor:commutative diagram of al,il,as,is}) and obtain a bijection between $\ARI(\Gamma)_{\underline{\al}*\underline{\al}}$ and $\ARI(\Gamma)_{\underline{\al}*\underline{\il}}$ (Theorem \ref{thm:adari(pal) induce bijection}).
As an appendix, we prove $\expari(\ARI(\Gamma)_\al)=\GARI(\Gamma)_\as$ (Theorem \ref{thm:expari(ARIal) = GARIas}), which is used to show Theorem \ref{thm:adari(pal) induce bijection} (cf. \cite[Proposition 2.6.1]{S-ARIGARI}).

\section{Moulds}\label{sec:Mould}

In \S \ref{subsec:Definition of mould}, we review the definition of moulds introduced in \cite{E-ARIGARI} and \cite{E-flex}.
For our convenience, we recall the definition by following \cite{FK}.
We introduce the alternality, the symmetrality, the alternility and the symmetrility in \S \ref{subsec:al,il,as,is}.
In \S \ref{subsec:al,il,as,is}, we give several examples of moulds, some of which will play an important role in later sections.

\subsection{Definition of moulds}\label{subsec:Definition of mould}

Let $\Gamma$ be a finite abelian group.
We set\footnote{We may take $\mathcal F$ as the field of all meromorphic functions or the Laurent series ring.}
$\mathcal{F}:=\bigcup_{m\geqslant 1}\Q (x_1,\dots,x_m)$.

\begin{definition}[{\cite[Definition 1.1]{FK}}]\label{def:mould}
A {\it mould} on $\Z_{\geqslant0}$ with values in $\mathcal{F}$ 
is  a collection (a sequence) 
\begin{equation*}
	M=(M(x_1,\dots, x_m))_{m\in\Z_{\geqslant0}}=\bigl(M(\emptyset),\ M(x_1),\ M(x_1, x_2),\ \dots\bigr),
\end{equation*}
with $M(\emptyset)\in\mathbb Q$ and $M(x_1,\dots,x_m)$
$\in\mathbb Q(x_1,\dots,x_m)^{\oplus\Gamma^{\oplus m}}$ for $m\geqslant 1$,
which is described by a summation
$$
M(x_1,\dots, x_m)=
\underset{(\sigma_1,\dots,\sigma_m)\in\Gamma^{\oplus m}}{\oplus}
M_{\sigma_1, \dots, \sigma_m}(x_1,\ \dots,\ x_m)
$$
where each $M_{\sigma_1, \dots, \sigma_m}(x_1,\ \dots,\ x_m)\in\mathbb Q(x_1,\dots,x_m)$. 
We denote the set of all moulds with values
in $\mathcal{F}$ by $\mathcal{M}(\mathcal{F};\Gamma)$. 
The set $\mathcal M(\mathcal F;\Gamma)$ forms a $\Q$-linear space by
\begin{align*}
	A+ B
	&:= (A(x_1,\dots, x_m)+ B(x_1,\dots, x_m))_{m\in\Z_{\geqslant0}}, \\
	c A
	&:= (c A(x_1,\dots, x_m))_{m\in\Z_{\geqslant0}},
\end{align*}
for $A, B\in\mathcal M(\mathcal F;\Gamma)$ and $c\in \Q$,
namely the addition and the scalar are taken componentwise.
We define a product on $\mathcal M(\mathcal F;\Gamma)$ by
\begin{equation*}
	 (A\times B)_{\sigma_1, \dots, \sigma_m}(x_1, \dots, x_m)
	:=\sum_{i=0}^mA_{\sigma_1, \dots, \sigma_i}(x_1, \dots, x_{i})
	B_{\sigma_{i+1}, \dots, \sigma_m}(x_{i+1}, \dots, x_m),
\end{equation*}
for $A,B\in\mathcal M(\mathcal F;\Gamma)$ and for $m\geqslant0$ and for $(\sigma_1,\dots,\sigma_m)\in\Gamma^{\oplus m}$. 
Then the pair $(\mathcal M(\mathcal F;\Gamma),\times)$ is a non-commutative, associative, unital $\Q$-algebra. Here, the unit $ I\in\mathcal M(\mathcal F;\Gamma)$ is given by 
$ I:=(1,0,0,\dots)$.
\end{definition}

We prepare $\BIMU(\Gamma)$ and $\overline{\BIMU}(\Gamma)$ as copies of $\mathcal M(\mathcal F;\Gamma)$.
\begin{remark}\label{rmk:biomould}
We often regard our moulds in Definition \ref{def:mould} as if a bimould introduced in \cite{E-ARIGARI} and \cite{E-flex}, and we sometimes denote
$M_{\sigma_1, \dots, \sigma_m}(x_1,\ \dots,\ x_m)$ by $M\varia{x_1,\ \dots,\ x_m}{\sigma_1,\ \dots,\ \sigma_m}$ for $M\in\BIMU(\Gamma)$ and denote
$M\varia{\sigma_1,\ \dots,\ \sigma_m}{x_1,\ \dots,\ x_m}$ for $M\in\overline{\BIMU}(\Gamma)$.
Moreover, we sometimes use $u_i$ instead of $x_i$ for $\BIMU(\Gamma)$ and use $v_i$ instead of $x_i$ for $\overline{\BIMU}(\Gamma)$.
\end{remark}

\begin{notation}\label{not:on expression of elements in V_Z}
We often use the following algebraic formulation which is useful to describe several properties of moulds:
Put $U:=\left\{\binom{u_i}{\sigma}\right\}_{i\in\N,\sigma\in\Gamma}$ and $V:=\left\{\binom{\sigma}{v_i}\right\}_{i\in\N,\sigma\in\Gamma}$. Let $U_\Z$ and $V_\Z$ be the sets such that
\begin{align*}
	&U_\Z:=\left\{\varia{u}{\sigma}\ \middle|\ u=a_1u_1+\cdots +a_ku_k,\ k\in\N,\ a_j\in\Z,\ \sigma\in\Gamma\right\}, \\
	&V_\Z:=\left\{\varia{\sigma}{v}\ \middle|\ v=a_1v_1+\cdots +a_kv_k,\ k\in\N,\ a_j\in\Z,\ \sigma\in\Gamma\right\}.
\end{align*}
and let
$U_\Z^\bullet$ (resp. $V_\Z^\bullet$)
be the non-commutative free monoid generated by all elements of $U_\Z$ (resp. $V_\Z$)
with the empty word $\emptyset$ as the unit. 
Occasionally we denote each element $\omega=u_1\cdots u_m\in U_\Z^\bullet$
with $u_1,\dots,u_m\in U_\Z$ by
$\omega=(u_1,\dots,u_m)$
as a sequence.
The {\it length} of $\omega=(u_1,\dots,u_m)$ is defined to be $l(\omega):=m$.
We define the length $l(\omega)$ for any elements $\omega\in V_\Z^\bullet$ in the same way.
\end{notation}

For our simplicity we  occasionally denote $M\in\BIMU(\Gamma)$ (resp. $\in\overline{\BIMU}(\Gamma)$) by
\begin{align}\label{eqn:notation of bimould}
	M=( M(\vecu_m))_{m\in\Z_{\geqslant0}},\qquad
	(\mbox{resp. } M=( M(\vecv_m))_{m\in\Z_{\geqslant0}})
\end{align}
where $\vecu_0:=\emptyset$ and $\vecu_m:=\varia{u_1,\ \dots,\ u_m}{\sigma_1,\ \dots,\ \sigma_m}$ (resp. $\vecv_0:=\emptyset$ and $\vecv_m:=\varia{\sigma_1,\ \dots,\ \sigma_m}{v_1,\ \dots,\ v_m}$) for $m\geqslant1$. Under the notations,  the product of $ A, B\in\BIMU(\Gamma)$ (resp. $\in\overline{\BIMU}(\Gamma)$) is expressed as
\begin{equation*}
	 A\times  B
	=\left(\sum_{\substack{
		\vecu_m=\alpha\beta}}
	A(\alpha)B(\beta)\right)_{m\in\Z_{\geqslant0}}
	(\mbox{resp. }=\left(\sum_{\substack{
		\vecv_m=\alpha\beta}}
	A(\alpha)B(\beta)\right)_{m\in\Z_{\geqslant0}})
\end{equation*}
where $\alpha$ and $\beta$ run over $U_{\Z}^\bullet$ (resp. $V_{\Z}^\bullet$).

We put
\begin{align*}
	\ARI(\Gamma)&:=\{ M\in\BIMU(\Gamma)\ |\  M(\emptyset)=0\}, \\
	\GARI(\Gamma)&:=\{ M\in\BIMU(\Gamma)\ |\  M(\emptyset)=1\}.
\end{align*}
By replacing $\BIMU(\Gamma)$ to $\overline{\BIMU}(\Gamma)$, two sets $\overline{\ARI}(\Gamma)$ and $\overline{\GARI}(\Gamma)$ are also defined.
We set 
\begin{equation}\label{eqn:bracket product}
	[A,B]:=A\times B - B\times A,
\end{equation}
for $A,B\in\mathcal M(\mathcal F;\Gamma)$.
Then we see that $(\ARI(\Gamma), [,])$ and $(\overline{\ARI}(\Gamma), [,])$ form Lie algebras and $(\GARI(\Gamma),\times)$ and $(\overline{\GARI}(\Gamma),\times)$ form groups.

\subsection{Alternality and symmetrality, alternility and symmetrility}\label{subsec:al,il,as,is}

We put
$\mathcal A_U:=\Q \langle U_\Z \rangle$
to be the non-commutative polynomial $\Q$-algebra generated by 
$U_\Z$
(i.e. $\mathcal A_U$ is the $\Q$-linear space generated by $U_\Z^\bullet$).
We equip $\mathcal A_U$ a product $\shuffle:\mathcal A_U^{\otimes2} \rightarrow \mathcal A_U$ 
which is linearly defined by $\emptyset\, \shuffle\, \omega:=\omega\, \shuffle\, \emptyset:=w$ and
\begin{equation}\label{eqn:shuffle product}
	a\omega\ \shuffle\ b\eta
	:=a(\omega\, \shuffle\, b\eta)+b(a\omega\, \shuffle\, \eta),
\end{equation}
for $a,b\in U_\Z$
and $\omega,\eta\in U_\Z^\bullet$.
Then the pair $(\mathcal A_U,\shuffle)$ forms a commutative, associative, unital $\Q$-algebra.\footnote{See \cite{Re} for detail.}
Let $\left\{ \Sh{\omega}{\eta}{\alpha} \right\}_{\omega,\eta,\alpha\in U_\Z^\bullet}$ to be the family in $\Z$ defined by
\begin{equation}\label{eqn:shuffle product, coefficient def}
\omega \shuffle \eta
=\sum_{\alpha\in U_\Z^\bullet} \Sh{\omega}{\eta}{\alpha}\alpha.
\end{equation}

We put $\mathcal K:=\Q(v_i\ |\ i\in\N)$, that is, the commutative field generated by all $v_i$ over $\Q$.
We define
$\mathcal A_V:=\mathcal K \langle V_\Z \rangle$
to be the non-commutative polynomial $\mathcal K$-algebra generated by 
$V_\Z$
(i.e. $\mathcal A_V$ is the $\mathcal K$-linear space generated by $V_\Z^\bullet$).
As with $\mathcal A_U$, the algebra $\mathcal A_V$ is equipped the product $\shuffle$ and the pair $(\mathcal A_V,\shuffle)$ forms a commutative, associative, unital $\mathcal K$-algebra.
While, we also equip $\mathcal A_V$ a product $\shuffle_*:\mathcal A_V^{\otimes2} \rightarrow \mathcal A_V$ 
which is linearly defined by $\emptyset\, \shuffle_*\, \omega:=\omega\, \shuffle_*\, \emptyset:=w$ and $\binom{\sigma}{v}\omega\, \shuffle_*\, \binom{\sigma'}{v'}\eta:=0$ for $\binom{\sigma}{v},\binom{\sigma'}{v'}\in V_\Z$ with $v= v'$
and $\omega,\eta\in V_\Z^\bullet$, and
\begin{align}\label{eqn:shufflestar product}
	\varia{\sigma}{v}\omega\, \shuffle_*\, \varia{\sigma'}{v'}\eta
	&:=\varia{\sigma}{v}\Bigl( \omega\, \shuffle_*\, \varia{\sigma'}{v'}\eta \Bigr)
		+\varia{\sigma'}{v'}\Bigl( \varia{\sigma}{v}\omega\, \shuffle_*\, \eta \Bigr) \\
	&\hspace{1.5cm}+\frac{1}{v-v'}\left\{\varia{\sigma\sigma'}{\ v}(\omega\, \shuffle_*\, \eta)
	-\varia{\sigma\sigma'}{\ v'}(\omega\, \shuffle_*\, \eta)\right\} \nonumber
\end{align}
for $\binom{\sigma}{v},\binom{\sigma'}{v'}\in V_\Z$ with $v\neq v'$
and $\omega,\eta\in V_\Z^\bullet$.
Then the pair $(\mathcal A_V,\shuffle_*)$ forms a commutative, non-associative, unital $\mathcal K$-algebra.
Let $\left\{ \Shstar{\omega}{\eta}{\alpha} \right\}_{\omega,\eta,\alpha\in V_\Z^\bullet}$ to be the family in $\mathcal K$ defined by
$$
\omega \shuffle_* \eta
=\sum_{\alpha\in V_\Z^\bullet} \Shstar{\omega}{\eta}{\alpha}\alpha.
$$

For $\omega=(u_1,\cdots, u_m)\in V_\Z^\bullet$ with $u_1,\dots,u_m\in V_\Z$, we denote $L(\omega):=\{u_1,\dots,u_m\}$ to be the set of all letters appearing in $\omega$.
Then for $\omega,\eta\in V_\Z^\bullet$, two words $\omega$ and $\eta$ have no same letters if and only if $L(\omega)\cap L(\eta)=\emptyset$ holds.
\begin{lem}[{cf. \cite[Lemma A.7]{FK}}]\label{lem:shuffle coefficient}
Let $r\geq2$. For $\omega,\eta,\alpha_1,\dots,\alpha_r\in U_{\Z}^\bullet$, we have
\begin{equation}\label{eqn:shuffle coefficient}
	\Sh{\omega}{\eta}{\alpha_1\cdots\alpha_r}
	=\sum_{\substack{
	\omega=\omega_1\cdots\omega_r \\	
	\eta=\eta_1\cdots\eta_r	}}
	\Sh{\omega_1}{\eta_1}{\alpha_1}\cdots\Sh{\omega_r}{\eta_r}{\alpha_r},
\end{equation}
and for $\omega,\eta\in V_{\Z}^\bullet$ with $L(\omega)\cap L(\eta)=\emptyset$ and for $\alpha_1,\dots,\alpha_r\in V_{\Z}^\bullet$, we have
\begin{equation}\label{eqn:shufflestar coefficient}
	\Shstar{\omega}{\eta}{\alpha_1\cdots\alpha_r}
	=\sum_{\substack{
	\omega=\omega_1\cdots\omega_r \\	
	\eta=\eta_1\cdots\eta_r	}}
	\Shstar{\omega_1}{\eta_1}{\alpha_1}\cdots\Shstar{\omega_r}{\eta_r}{\alpha_r},
\end{equation}
where $\omega_1,\dots,\omega_r,\eta_1,\dots,\eta_r$ run over $U_\Z^\bullet$.
\end{lem}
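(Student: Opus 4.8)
The plan is to reduce both \eqref{eqn:shuffle coefficient} and \eqref{eqn:shufflestar coefficient} to the case $r=2$ and to prove that case from the defining recursions \eqref{eqn:shuffle product} and \eqref{eqn:shufflestar product}.

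Granting the case $r=2$, the general statement follows by induction on $r$. For $r\geq 3$ set $\beta:=\alpha_2\cdots\alpha_r$ and apply the $r=2$ identity to $\alpha_1\cdots\alpha_r=\alpha_1\beta$; this writes $\Sh{\omega}{\eta}{\alpha_1\cdots\alpha_r}$ (resp. $\Shstar{\omega}{\eta}{\alpha_1\cdots\alpha_r}$) as a sum over factorizations $\omega=\omega_1\omega'$ and $\eta=\eta_1\eta'$ of the products $\Sh{\omega_1}{\eta_1}{\alpha_1}\,\Sh{\omega'}{\eta'}{\beta}$ (resp. $\Shstar{\omega_1}{\eta_1}{\alpha_1}\,\Shstar{\omega'}{\eta'}{\beta}$). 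Since $\omega'$ and $\eta'$ are factors of $\omega$ and $\eta$, they again satisfy $L(\omega')\cap L(\eta')=\emptyset$, so the induction hypothesis applies to $\Sh{\omega'}{\eta'}{\beta}$ (resp. $\Shstar{\omega'}{\eta'}{\beta}$); concatenating the resulting factorizations of $\omega'$ and $\eta'$ with $\omega_1$ and $\eta_1$ gives the formula for $r$. This step uses only the numerical $r=2$ identity and never rebrackets $\shuffle_*$-products, so the non-associativity of $\shuffle_*$ is irrelevant.

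For \eqref{eqn:shuffle coefficient} with $r=2$ one may argue combinatorially: $\Sh{\omega}{\eta}{\alpha}$ is the number of subsets $S\subseteq\{1,\dots,l(\alpha)\}$ of size $l(\omega)$ whose induced subword of $\alpha$ is $\omega$ and whose complement induces $\eta$, and restricting $S$ to the two blocks of $\alpha=\alpha_1\alpha_2$ gives a bijection onto pairs of such data for $(\omega_1,\eta_1,\alpha_1)$ and $(\omega_2,\eta_2,\alpha_2)$, where $\omega=\omega_1\omega_2$ and $\eta=\eta_1\eta_2$ are the induced factorizations; this is exactly \eqref{eqn:shuffle coefficient}. (Equivalently, this is the fact that deconcatenation is an algebra morphism for $\shuffle$; see \cite{Re}.) The route that also covers $\shuffle_*$ is induction on $l(\omega)+l(\eta)$: when $\omega$, $\eta$, $\alpha_1$ or $\alpha_2$ is empty the identity is immediate, and otherwise, writing $\omega=\binom{\sigma}{v}\omega'$ and $\eta=\binom{\sigma'}{v'}\eta'$, one expands $\omega\shuffle_*\eta$ by \eqref{eqn:shufflestar product}, extracts the coefficient of $\alpha_1\alpha_2$ according to which of the leading letters $\binom{\sigma}{v}$, $\binom{\sigma'}{v'}$, $\binom{\sigma\sigma'}{v}$, $\binom{\sigma\sigma'}{v'}$ of the four terms matches the first letter of $\alpha_1$, and matches each contribution with the induction hypothesis applied to a strictly shorter pair.

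The crux is the $\shuffle_*$ case of this last induction. The two "contracting" terms of \eqref{eqn:shufflestar product} carry the rational prefactors $\pm\frac{1}{v-v'}$, and one has to check that these recombine correctly on the right-hand side of \eqref{eqn:shufflestar coefficient} once the induction hypothesis is applied; one must also allow for coincidences among the four leading letters listed above and handle their interaction with the convention that $\binom{\sigma}{v}\omega\shuffle_*\binom{\sigma'}{v'}\eta=0$ when $v=v'$. The hypothesis $L(\omega)\cap L(\eta)=\emptyset$ enters precisely here: it keeps the denominators $\frac{1}{v-v'}$ well-defined throughout the recursion and prevents the case analysis from degenerating. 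I expect this bookkeeping, rather than any conceptual point, to be the main obstacle; the shuffle case itself is essentially standard.
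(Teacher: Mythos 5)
Your approach is correct, and it is essentially the argument the paper leaves implicit: the paper's own proof consists of citing \cite[Lemma A.7]{FK} for \eqref{eqn:shuffle coefficient} and asserting that \eqref{eqn:shufflestar coefficient} follows ``by the same way'', so your reduction to $r=2$ followed by induction on $l(\omega)+l(\eta)$ from the defining recursions is the natural fleshing-out of that. The one step you leave as ``bookkeeping to be checked'' does go through. Writing $\omega=\binom{\sigma}{v}\omega'$, $\eta=\binom{\sigma'}{v'}\eta'$ with $v\neq v'$ and $\alpha_1=x\alpha_1'$ with $x\in V_\Z$ (the cases where $\omega$, $\eta$, $\alpha_1$ or $\alpha_2$ is empty being immediate), extraction of the coefficient of $x\alpha_1'\alpha_2$ from \eqref{eqn:shufflestar product} gives
\begin{align*}
\Shstar{\omega}{\eta}{\alpha_1\alpha_2}
=\delta_{x,\binom{\sigma}{v}}\Shstar{\omega'}{\eta}{\alpha_1'\alpha_2}
+\delta_{x,\binom{\sigma'}{v'}}\Shstar{\omega}{\eta'}{\alpha_1'\alpha_2}
+\frac{\delta_{x,\binom{\sigma\sigma'}{v}}-\delta_{x,\binom{\sigma\sigma'}{v'}}}{v-v'}\Shstar{\omega'}{\eta'}{\alpha_1'\alpha_2},
\end{align*}
while on the right-hand side of \eqref{eqn:shufflestar coefficient} one splits the sum over $(\omega_1,\eta_1)$ according to whether each of $\omega_1$, $\eta_1$ is empty or begins with $\binom{\sigma}{v}$, resp.\ $\binom{\sigma'}{v'}$, and expands $\Shstar{\omega_1}{\eta_1}{x\alpha_1'}$ by the same recursion when both are nonempty; after applying the induction hypothesis to the shorter pairs $(\omega',\eta)$, $(\omega,\eta')$, $(\omega',\eta')$, the four indicator-weighted pieces match term by term, and this matching is insensitive to coincidences among the letters $\binom{\sigma}{v},\binom{\sigma'}{v'},\binom{\sigma\sigma'}{v},\binom{\sigma\sigma'}{v'}$, so your plan is sound as stated. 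One caveat: your remark that $L(\omega)\cap L(\eta)=\emptyset$ ``prevents the case analysis from degenerating'' is not quite accurate, since disjointness of letters still allows a letter of $\omega$ and a letter of $\eta$ to share the same lower entry $v$ with different upper entries; in that situation the convention $\binom{\sigma}{v}\omega\,\shuffle_*\,\binom{\sigma'}{v}\eta:=0$ makes the left side of \eqref{eqn:shufflestar coefficient} vanish while the right side need not (take $\omega=(\binom{\sigma}{v_1})$, $\eta=(\binom{\sigma'}{v_1})$, $\alpha_1=\eta$, $\alpha_2=\omega$). So the induction really requires the lower entries occurring in $\omega$ to be distinct from those in $\eta$, which is exactly the situation in which the lemma is applied (independent variables $v_1,\dots,v_{p+q}$ in Definition \ref{def:al,il,as,is}); this looseness is already present in the paper's statement and is not introduced by your argument.
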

\begin{proof}
The equation \eqref{eqn:shuffle coefficient} is proved in \cite[Lemma A.7]{FK}.
By the same way, we obtain \eqref{eqn:shufflestar coefficient}.
\end{proof}

\begin{definition}[{cf. \cite[Definition 1.4]{FK}}]\label{def:al,il,as,is}
A mould $M\in \ARI(\Gamma)$ (resp. $\in\GARI(\Gamma)$) is called {\it alternal} (resp. {\it symmetral}) if we have
\begin{align}\label{eqn:def of al,as}
	&\sum_{\alpha\in U_\Z^\bullet}
	\Sh{\varia{u_1,\ \dots,\ u_p}{\sigma_1,\ \dots,\ \sigma_p}}{\varia{u_{p+1},\ \dots,\ u_{p+q}}	{\sigma_{p+1},\ \dots,\ \sigma_{p+q}}}{\alpha} M(\alpha)=0 \\
	&\hspace{5cm}
	(\mbox{resp. } =M\varia{u_1,\ \dots,\ u_p}{\sigma_1,\ \dots,\ \sigma_p}M\varia{u_{p+1},\ \dots,\ u_{p+q}}{\sigma_{p+1},\ \dots,\ \sigma_{p+q}}) \nonumber
\end{align}
for $p,q\geq1$.
We denote $\ARI(\Gamma)_\al$ (resp. $\GARI(\Gamma)_\as$) to be the subset consisting of alternal (resp. symmetral) moulds.
While, a mould $M\in \overline{\ARI}(\Gamma)$ (resp. $\in\overline{\GARI}(\Gamma)$) is called {\it alternil} (resp. {\it symmetril}) if we have
\begin{align}\label{eqn:def of il,is}
	&\sum_{\alpha\in V_\Z^\bullet}
	\Shstar{\varia{\sigma_1,\ \dots,\ \sigma_p}{v_1,\ \dots,\ v_p}}{\varia{\sigma_{p+1},\ \dots,\ \sigma_{p+q}}{v_{p+1},\ \dots,\ v_{p+q}}}{\alpha} M(\alpha)=0 \\
	&\hspace{5cm}
	(\mbox{resp. } =M\varia{\sigma_1,\ \dots,\ \sigma_p}{v_1,\ \dots,\ v_p}M\varia{\sigma_{p+1},\ \dots,\ \sigma_{p+q}}{v_{p+1},\ \dots,\ v_{p+q}}) \nonumber
\end{align}
for $p,q\geq1$.
\end{definition}
\begin{remark}
For any mould $M\in \overline{\ARI}(\Gamma)$ (resp. $\in\overline{\GARI}(\Gamma)$), the alternality (resp. symmetrality) is also defined, that is, $M$ is called {\it alternal} (resp. {\it symmetral}) if we have
\begin{align}\label{eqn:def of al,as on overline(ARI)}
	&\sum_{\alpha\in V_\Z^\bullet}
	\Sh{\varia{\sigma_1,\ \dots,\ \sigma_p}{v_1,\ \dots,\ v_p}}{\varia	{\sigma_{p+1},\ \dots,\ \sigma_{p+q}}{v_{p+1},\ \dots,\ v_{p+q}}}{\alpha} M(\alpha)=0 \\
	&\hspace{5cm}
	(\mbox{resp. } =M\varia{\sigma_1,\ \dots,\ \sigma_p}{v_1,\ \dots,\ v_p}M\varia{\sigma_{p+1},\ \dots,\ \sigma_{p+q}}{v_{p+1},\ \dots,\ v_{p+q}}) \nonumber
\end{align}
for $p,q\geq1$.
\end{remark}

We denote $\overline{\ARI}(\Gamma)_\al$ (resp. $\overline{\ARI}(\Gamma)_\il$) to be the subset of $\overline{\ARI}(\Gamma)$ consisting of alternal (resp. alternil) moulds, and we denote $\overline{\GARI}(\Gamma)_\as$ (resp. $\overline{\GARI}(\Gamma)_\is$) to be the subset of $\overline{\GARI}(\Gamma)$ consisting of symmetral (resp. symmetril) moulds.

Our purpose in this paper is to give self-contained proof of various properties of these sets (see \S \ref{sec:Exponential map} and \S \ref{sec:Automorphism}).

\begin{example}\label{ex:al,il,as,is mould}
We give examples of Definition \ref{def:al,il,as,is}.
\begin{enumerate}
\renewcommand{\labelenumi}{(\alph{enumi}).}
	\item The mould $A\in \ARI(\Gamma)$ is defined by $A(\vecu_0):=A(\vecu_1):=0$ and for $m\geq2$
	\begin{equation*}
		A(\vecu_m):=\frac{1}{(u_2-u_1)\cdots(u_m-u_{m-1})}.
	\end{equation*}
	Then this mould $A$ is alternal.
	\item The mould $\paj\in \GARI(\Gamma)$ is defined by $\paj(\vecu_0):=1$ and for $m\geq1$
	\begin{equation*}
		\paj(\vecu_m):=\frac{1}{u_1(u_1+u_2)\cdots(u_1+\cdots+u_m)}.
	\end{equation*}
	Then this mould $\paj$ is symmetral.
	\item The mould $C\in \overline{\ARI}(\Gamma)$ is defined by $C(\vecv_0):=0$ and $C(\vecv_1):=1$ and for $m\geq2$
	\begin{equation*}
		C(\vecv_m):=\frac{1}{(v_2-v_1) \cdots (v_m-v_1)}.
	\end{equation*}
	Then this mould $C$ is alternil.
	\item The mould $\pic\in \overline{\GARI}(\Gamma)$ is defined by $\pic(\vecv_0):=1$ and for $m\geq1$
	\begin{equation*}
		\pic(\vecv_m):=\frac{1}{v_1v_2 \cdots v_m}.
	\end{equation*}
	Then this mould $\pic$ is symmetril.
	\item The mould $\pij\in \overline{\GARI}(\Gamma)$ is defined by $\pij(\vecv_0):=1$ and $\pij(\vecv_1):=\frac{1}{v_1}$ and for $m\geq2$
	\begin{equation*}
		\pij(\vecv_m):=\frac{1}{(v_1-v_2)(v_2-v_3) \cdots (v_{m-1}-v_m)v_m}.
	\end{equation*}
	Then this mould $\pij$ is symmetral.
\end{enumerate}
\end{example}

\begin{remark}\label{rem:shufflestar product-pic ver.}
We mention two properties of the mould $\pic\in \overline{\GARI}(\Gamma)$:
\begin{enumerate}
\renewcommand{\labelenumi}{(\alph{enumi}).}
\item We put $\omega_1=\varia{\sigma}{v}$ and $\eta_1=\varia{\sigma'}{v'}$.
By using the mould $\pic$ and using flexions in \cite[Definition 1.8]{FK}, we can rewrite \eqref{eqn:shufflestar product} as
\begin{align}\label{eqn:shufflestar product-pic ver.}
	(\omega_1,\omega)\, \shuffle_*\, (\eta_1,\eta)
	&:=(\omega_1)\left( \omega\, \shuffle_*\, (\eta_1,\eta) \right)
		+(\eta_1)\left((\omega_1,\omega)\, \shuffle_*\, \eta \right) \\
	&\quad-\pic(\lrflex{\omega_1}{\eta_1})\ (\ulflex{\omega_1}{\eta_1},\,\omega\, \shuffle_*\, \eta)
	-\pic(\lrflex{\eta_1}{\omega_1})\ (\ulflex{\eta_1}{\omega_1},\,\omega\, \shuffle_*\, \eta). \nonumber
\end{align}
\item By the definition of the mould $\pic$, the following relations hold:
\begin{align}
\label{eqn:pic,upperflex}& \pic(\ulflex{\omega_1}{\omega_2})=\pic(\omega_1), \\
\label{eqn:pic,lowerflex}& \pic(\lrflex{\omega_1}{\omega_2})=-\pic(\lrflex{\omega_2}{\omega_1}), \\
\label{eqn:pic,Fay identity}& \pic(\lrflex{\omega_1}{(\omega_2,\omega_3)})+\pic(\lrflex{\omega_2}{(\omega_3,\omega_1)})+\pic(\lrflex{\omega_3}{(\omega_1,\omega_2)})=0, \\
\label{eqn:pic,decomposition}& \pic(\omega)\pic(\eta)=\pic(\omega,\eta),
\end{align}
for $\omega_1,\omega_2,\omega_3\in V_\Z$ and $\omega,\eta\in V_\Z^\bullet$.
\end{enumerate}
We often use these properties to prove the alternility and the symmetrility.
\end{remark}

\section{Exponential maps}\label{sec:Exponential map}

In this section, we recall the definition of dimoulds introduced in \cite{Sau}, and by using dimoulds, we prove theorem on the exponential map.
In \S \ref{subsec:Dimould}, we review the definition of dimoulds, and we show equivalent conditions of the alternality, the symmetrality and the alternility, the symmetrility for moulds.
In \S \ref{subsec:Definition of exp}, by using these equivalent conditions, we prove Theorem \ref{thm:exp(ARIal)=GARIas etc}, which is not presented in \cite{E-flex}, but is required in \S \ref{sec:Automorphism}.

\subsection{Dimoulds}\label{subsec:Dimould}
\begin{definition}[cf. {\cite[Definition 5.2]{Sau}}]\label{def:2.2.1}
A {\it dimould}
\footnote{We note that the definition of the dimould is different from the definition of the bimould introduced in \cite{E-ARIGARI} and \cite{E-flex}.}
$M^{\bullet,\bullet}$ with values in $\mathcal F$ is a sequence:
\begin{equation*}
	M^{\bullet,\bullet}:=(M^{\bullet,\bullet}(x_1,\dots,x_r ;\ x_{r+1},\dots,x_{r+s}))_{r,s\in\mathbb{N}_0}.
\end{equation*}
with $M^{\bullet,\bullet}(\emptyset;\emptyset) \in \Q$ and $M^{\bullet,\bullet}(x_1,\dots,x_r ;\ x_{r+1},\dots,x_{r+s}) \in \mathbb Q(x_1,\dots,x_{r+s})^{\oplus\Gamma^{\oplus r+s}}$ for $r\geq1$ or $s\geq1$, which is described by a summation
\begin{align*}
	&M^{\bullet,\bullet}(x_1,\dots,x_r ;\ x_{r+1},\dots,x_{r+s}) \\
	&=\underset{(\sigma_1,\dots,\sigma_{r+s})\in\Gamma^{\oplus {r+s}}}{\oplus}
	M^{\bullet,\bullet}_{\sigma_1, \dots, \sigma_r;\sigma_{r+1}, \dots, \sigma_{r+s}}(x_1,\dots,x_r ;\ x_{r+1},\dots,x_{r+s})
\end{align*}
where each $M^{\bullet,\bullet}_{\sigma_1, \dots, \sigma_r;\sigma_{r+1}, \dots, \sigma_{r+s}}(x_1,\dots,x_r ;\ x_{r+1},\dots,x_{r+s}) \in \mathbb Q(x_1,\dots,x_{r+s})$. 
We denote the set of all dimoulds with values in $\mathcal F$ by $\mathcal{M}_2(\mathcal{F};\Gamma)$.
By the component-wise summation and the component-wise scalar multiple, the set $\mathcal{M}_2(\mathcal{F};\Gamma)$ forms a $\Q$-linear space.
The product of $\mathcal{M}_2(\mathcal{F};\Gamma)$ which is denoted by the same symbol $\times$ as the product of $\mathcal{M}(\mathcal{F};\Gamma)$ is defined by
\begin{align}\label{eqn:2.2.1}
	&(A^{\bullet,\bullet}\times B^{\bullet,\bullet})_{\sigma_1, \dots, \sigma_r;\sigma_{r+1}, \dots, \sigma_{r+s}}(x_1,\dots,x_r ;\ x_{r+1},\dots,x_{r+s}) \\
	&:=\sum_{i=0}^r\sum_{j=0}^s
	A^{\bullet,\bullet}_{\sigma_1, \dots, \sigma_i;\sigma_{r+1}, \dots, \sigma_{r+j}}(x_1,\dots,x_i ;\ x_{r+1},\dots,x_{r+j}) \nonumber \\
	&\hspace{2cm} \cdot B^{\bullet,\bullet}_{\sigma_{i+1}, \dots, \sigma_r;\sigma_{r+j+1}, \dots, \sigma_{r+s}}(x_{i+1},\dots,x_r\ ;\ x_{r+j+1},\dots,x_{r+s}), \nonumber
\end{align}
for $A^{\bullet,\bullet},B^{\bullet,\bullet}\in\mathcal{M}_2(\mathcal{F};\Gamma)$ and for $r,s\geqslant0$ and for $(\sigma_1,\dots,\sigma_{r+s}) \in \Gamma^{\oplus r+s}$.
Then $(\mathcal{M}_2(\mathcal{F};\Gamma), \times)$ is non-commutative, associative $\Q$-algebra.
Here, the unit $I^{\bullet,\bullet}$ of $(\mathcal{M}_2(\mathcal{F};\Gamma), \times)$ is defined as follows:
\begin{equation*}
	I^{\bullet,\bullet}(\omega;\eta):=\left\{
	\begin{array}{ll}
	1 & (\omega=\eta=\emptyset), \\
	0 & (\mbox{otherwise}).
	\end{array}\right.
\end{equation*}
\end{definition}

We prepare $\DIMU(\Gamma)$ and $\overline{\DIMU}(\Gamma)$ as copies of $\mathcal{M}_2(\mathcal{F};\Gamma)$.
\begin{remark}
Similar to the notation of Remark \ref{rmk:biomould}, we sometimes denote \\
$M^{\bullet,\bullet}_{\sigma_1, \dots, \sigma_r;\sigma_{r+1}, \dots, \sigma_{r+s}}(x_1,\dots,x_r ;\ x_{r+1},\dots,x_{r+s})$ by
$$
M^{\bullet,\bullet}\varia{u_1,\ \dots,\ u_r;\ u_{r+1},\ \dots,\ u_{r+s}}{\sigma_1,\ \dots,\ \sigma_r;\ \sigma_{r+1},\ \dots,\ \sigma_{r+s}}
\quad (\mbox{resp.  } M^{\bullet,\bullet}\varia{\sigma_1,\ \dots,\ \sigma_r;\ \sigma_{r+1},\ \dots,\ \sigma_{r+s}}{v_1,\ \dots,\ v_r\,;\ v_{r+1},\ \dots,\ v_{r+s}})
$$
for $M^{\bullet,\bullet}\in \DIMU(\Gamma)$ (resp. $\in \overline{\DIMU}(\Gamma)$).
As with the notation \eqref{eqn:notation of bimould}, we often denote $M^{\bullet,\bullet}\in\DIMU(\Gamma)$ (resp. $\in\overline{\DIMU}(\Gamma)$) as
\begin{align*}
	M^{\bullet,\bullet}=( M^{\bullet,\bullet}(\omega; \eta))_{\omega, \eta}
\end{align*}
by putting $\omega:=\varia{u_1,\ \dots,\ u_r}{\sigma_1,\ \dots,\ \sigma_r}$ and $\eta:=\varia{u_{r+1},\ \dots,\ u_{r+s}}{\sigma_{r+1},\ \dots,\ \sigma_{r+s}}$ (resp. $\omega:=\varia{\sigma_1,\ \dots,\ \sigma_r}{v_1,\ \dots,\ v_r}$ and $\eta:=\varia{\sigma_{r+1},\ \dots,\ \sigma_{r+s}}{v_{r+1},\ \dots,\ v_{r+s}}$).
By using this notation, we can rewrite \eqref{eqn:2.2.1} by
\begin{align*}
	A^{\bullet,\bullet}\times B^{\bullet,\bullet}
	=\left(A^{\bullet,\bullet}\times B^{\bullet,\bullet}(\omega;\eta)\right)_{\omega,\eta}
	=\left(\sum_{\substack{
		\omega=\omega_1\omega_2 \\
		\eta=\eta_1\eta_2}}
	A^{\bullet,\bullet}(\omega_1;\eta_1)B^{\bullet,\bullet}(\omega_2;\eta_2)\right)_{\omega,\eta},
\end{align*}
where $\omega_1,\omega_2,\eta_1,\eta_2$ run over $U_\Z^\bullet$ (resp. $V_\Z^\bullet$).
\end{remark}
We introduce two maps from the set $\overline{\BIMU}(\Gamma)$ of moulds to the set $\overline{\DIMU}(\Gamma)$ of dimoulds.
\begin{definition}[{\cite[\S 5.2]{Sau}}]
(i): The $\Q$-linear map
$\shmap: \overline{\BIMU}(\Gamma) \rightarrow\overline{\DIMU}(\Gamma)$ is defined by
\begin{align*}
	\shmap(M)
	&=\left(\shmap(M)(\omega;\eta)\right)_{\omega,\eta}
	:=\left(\sum_{\alpha\in V_\Z^\bullet}\Sh{\omega}{\eta}{\alpha}M(\alpha)\right)_{\omega,\eta}
\end{align*}
for $M\in\overline{\BIMU}(\Gamma)$.\\
(ii): The $\Q$-linear map $i:\overline{\BIMU}(\Gamma)\otimes_\Q \overline{\BIMU}(\Gamma)\rightarrow\overline{\DIMU}(\Gamma)$ is defined by
\begin{align*}
	i(M\otimes N)
	&=\left(i(M\otimes N)(\omega;\eta)\right)_{\omega,\eta} \\
	&:=\left(M(\omega)N(\eta)\right)_{\omega,\eta}
\end{align*}
for $M,N\in\overline{\BIMU}(\Gamma)$.
It is clear that $i$ is injective. For our simplicity, we denote $i(M\otimes N)$ by $M\otimes N$.
\end{definition}
By using these two maps, we reformulate the symmetral moulds and the alternal moulds in terms of dimoulds.
\begin{prop}[{\cite[Lemma 5.2.]{Sau}}]\label{prop:gp-like, Lie-like}
For mould $M\in\overline{\BIMU}(\Gamma)$, we have
\begin{align*}
	{\rm (i)}:&M\in\overline{\ARI}(\Gamma)_\al\Longleftrightarrow
	\shmap(M)=M\otimes I+I\otimes M, \\
	{\rm (ii)}:&M\in\overline{\GARI}(\Gamma)_\as\Longleftrightarrow
	\shmap(M)=M\otimes M.
\end{align*}
\end{prop}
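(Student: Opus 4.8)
The plan is to prove both equivalences by unwinding the definition of $\shmap$ and comparing the two dimoulds in question component by component, i.e.\ at each pair $(\omega;\eta)\in V_\Z^\bullet\times V_\Z^\bullet$. Conceptually, if we regard a mould $M$ as the $\mathcal{K}$-linear form on $(\mathcal{A}_V,\shuffle)$ whose value on a word $\omega$ is $M(\omega)$, then $\shmap(M)(\omega;\eta)=M(\omega\shuffle\eta)$, so that ``$\shmap(M)=M\otimes M$'' expresses that $M$ is a character of $(\mathcal{A}_V,\shuffle)$ and ``$\shmap(M)=M\otimes I+I\otimes M$'' that $M$ is an infinitesimal character; I will, however, argue directly. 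First I would record the relevant components. Since $\emptyset\shuffle\omega=\omega\shuffle\emptyset=\omega$ we have $\Sh{\omega}{\emptyset}{\alpha}=\Sh{\emptyset}{\omega}{\alpha}=\delta_{\alpha,\omega}$, hence $\shmap(M)(\omega;\emptyset)=\shmap(M)(\emptyset;\omega)=M(\omega)$ and $\shmap(M)(\emptyset;\emptyset)=M(\emptyset)$, while for $l(\omega),l(\eta)\geq1$ the component $\shmap(M)(\omega;\eta)$ is exactly the sum $\sum_{\alpha\in V_\Z^\bullet}\Sh{\omega}{\eta}{\alpha}M(\alpha)$ appearing in \eqref{eqn:def of al,as on overline(ARI)}. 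On the other side, since $I(\xi)=0$ for $\xi\neq\emptyset$ and $I(\emptyset)=1$, the dimould $M\otimes I+I\otimes M$ takes the value $M(\omega)$ at $(\omega;\emptyset)$, the value $M(\eta)$ at $(\emptyset;\eta)$, the value $2M(\emptyset)$ at $(\emptyset;\emptyset)$, and the value $0$ whenever $l(\omega),l(\eta)\geq1$; whereas $M\otimes M$ takes the value $M(\omega)M(\eta)$ at every $(\omega;\eta)$.

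For (i): if $M$ is alternal then $M\in\overline{\ARI}(\Gamma)$, so $M(\emptyset)=0$; the three boundary components then agree (in particular $0=2\cdot0$ at $(\emptyset;\emptyset)$), and for $l(\omega),l(\eta)\geq1$ both sides vanish by alternality, so $\shmap(M)=M\otimes I+I\otimes M$. Conversely, given this dimould identity, its $(\emptyset;\emptyset)$-component forces $M(\emptyset)=2M(\emptyset)$, hence $M(\emptyset)=0$ and $M\in\overline{\ARI}(\Gamma)$, while its components with $l(\omega),l(\eta)\geq1$ read $\sum_{\alpha}\Sh{\omega}{\eta}{\alpha}M(\alpha)=0$, which is precisely alternality.

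For (ii): if $M$ is symmetral then $M\in\overline{\GARI}(\Gamma)$, so $M(\emptyset)=1$; the boundary components agree (in particular $M(\omega)=M(\omega)\cdot1$ at $(\omega;\emptyset)$), and for $l(\omega),l(\eta)\geq1$ the identity $\sum_{\alpha}\Sh{\omega}{\eta}{\alpha}M(\alpha)=M(\omega)M(\eta)$ is precisely symmetrality, so $\shmap(M)=M\otimes M$. Conversely, given $\shmap(M)=M\otimes M$, the $(\emptyset;\emptyset)$-component gives $M(\emptyset)^2=M(\emptyset)$; the possibility $M(\emptyset)=0$ is degenerate, since then the $(\omega;\emptyset)$-components give $M(\omega)=M(\omega)M(\emptyset)=0$ for all $\omega$, i.e.\ $M=0\notin\overline{\GARI}(\Gamma)$, so $M(\emptyset)=1$, $M\in\overline{\GARI}(\Gamma)$, and the components with $l(\omega),l(\eta)\geq1$ give symmetrality. (Accordingly, the equivalence in (ii) is to be read for $M\in\overline{\GARI}(\Gamma)$.)

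I do not anticipate a genuine obstacle here: the statement is essentially a reorganization of the definitions. The only points calling for care are the bookkeeping of the empty-word boundary components, where the normalizations $M(\emptyset)=0$ (resp.\ $M(\emptyset)=1$) enter and where one must observe that the defining relations, stated in Definition \ref{def:al,il,as,is} only for $p,q\geq1$, together with the value at $\emptyset$, already capture the entire dimould identity. Note in particular that Lemma \ref{lem:shuffle coefficient} is \emph{not} needed here, as only the two-slot (bilinear) shuffle intervenes; that lemma, with $r=2$, would enter only if one preferred to first show that $\shmap$ is a $\Q$-algebra homomorphism $(\overline{\BIMU}(\Gamma),\times)\to(\overline{\DIMU}(\Gamma),\times)$ and then deduce the proposition from the general correspondence between characters and group-like elements — an alternative but less direct route.
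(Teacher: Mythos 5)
Your proof is correct and follows essentially the same route as the paper's: a component-wise comparison of $\shmap(M)$ with $M\otimes I+I\otimes M$ (resp.\ $M\otimes M$) in the four cases determined by which of $\omega,\eta$ are empty, with the normalization $M(\emptyset)=0$ (resp.\ $M(\emptyset)=1$) handling the boundary components. You in fact spell out slightly more than the paper (which proves only (i) and leaves (ii) and the converse to the reader), including the harmless $M(\emptyset)=0$ degenerate case in (ii), which is treated correctly.
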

\begin{proof}
Because we can prove (i) as with (ii), we only prove (i). \\
($\Leftarrow$): Let $M\in\overline{\BIMU}(\Gamma)$. By the definition of the map $\mathpzc{Sh}$, we have
\begin{equation*}
	\shmap(M)(\omega;\eta)
	=\left\{\begin{array}{ll}
		M(\emptyset) & (\omega=\eta=\emptyset), \\
		M(\eta) & (\omega=\emptyset,\ \eta\neq\emptyset), \\
		M(\omega) & (\omega\neq\emptyset,\ \eta=\emptyset), \\
		\displaystyle\sum_{\alpha\in V_\Z^\bullet}\Sh{\omega}{\eta}{\alpha}M(\alpha) & (\omega,\eta\neq\emptyset).
	\end{array}\right.
\end{equation*}
While, we have
\begin{equation*}
	(M\otimes I+I\otimes M)(\omega;\eta)
	=M(\omega)I(\eta)+I(\omega)M(\eta)
	=\left\{\begin{array}{ll}
		2\cdot M(\emptyset) & (\omega=\eta=\emptyset), \\
		M(\eta) & (\omega=\emptyset,\ \eta\neq\emptyset), \\
		M(\omega) & (\omega\neq\emptyset,\ \eta=\emptyset), \\
		0 & (\omega,\eta\neq\emptyset).
	\end{array}\right.
\end{equation*}
If we have $\shmap(M)=M\otimes I+I\otimes M$, then we get $M(\emptyset)=0$ and
\begin{align*}
	\sum_{\alpha\in V_\Z^\bullet}\Sh{\omega}{\eta}{\alpha}M(\alpha)=0,
\end{align*}
for $\omega,\eta\neq\emptyset$.
Therefore, we obtain $M\in\overline{\ARI}(\Gamma)_\al$.
By observing this discussion, we get the proof of $(\Rightarrow)$.
\end{proof}

\begin{cor}\label{cor:2.3.1}
We have
\begin{equation*}
	\mathpzc{Sh}(I)=I \otimes I = I^{\bullet,\bullet}.
\end{equation*}
\end{cor}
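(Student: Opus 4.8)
The plan is to combine Proposition \ref{prop:gp-like, Lie-like}(ii) with one elementary observation about the shuffle coefficients. First I would remark that the unit mould $I=(1,0,0,\dots)$ is trivially symmetral: in the defining identity \eqref{eqn:def of al,as on overline(ARI)} with $M=I$ and $p,q\geq1$, the left-hand side is $\sum_{\alpha\in V_\Z^\bullet}\Sh{\omega}{\eta}{\alpha}I(\alpha)=\Sh{\omega}{\eta}{\emptyset}$, which vanishes whenever $\omega,\eta\neq\emptyset$ because a shuffle of two words has length equal to the sum of their lengths, so $\emptyset$ never occurs in $\omega\shuffle\eta$ unless $\omega=\eta=\emptyset$; and the right-hand side $I(\omega)I(\eta)$ also vanishes for $\omega,\eta\neq\emptyset$. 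Together with $I(\emptyset)=1$ this shows $I\in\overline{\GARI}(\Gamma)_\as$, hence Proposition \ref{prop:gp-like, Lie-like}(ii) yields $\shmap(I)=I\otimes I$.

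It then remains to identify $I\otimes I$ with $I^{\bullet,\bullet}$. By definition $(I\otimes I)(\omega;\eta)=I(\omega)I(\eta)$, which equals $1$ when $\omega=\eta=\emptyset$ and $0$ otherwise; this is exactly the formula defining $I^{\bullet,\bullet}$ in Definition \ref{def:2.2.1}. Alternatively, one can bypass Proposition \ref{prop:gp-like, Lie-like} altogether and argue directly: $\shmap(I)(\omega;\eta)=\sum_{\alpha\in V_\Z^\bullet}\Sh{\omega}{\eta}{\alpha}I(\alpha)=\Sh{\omega}{\eta}{\emptyset}$, and the length argument above shows this is $1$ for $\omega=\eta=\emptyset$ and $0$ in every other case, i.e. $\shmap(I)=I^{\bullet,\bullet}$.

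There is no genuine obstacle in this corollary; it is a bookkeeping check. The only point that deserves to be stated explicitly is the claim that $\Sh{\omega}{\eta}{\emptyset}=\delta_{\omega,\emptyset}\delta_{\eta,\emptyset}$, which follows immediately from the inductive definition \eqref{eqn:shuffle product} of the shuffle product (each application of the recursion strictly decreases total length, so the empty word is produced only from $\emptyset\shuffle\emptyset$, with coefficient $1$).
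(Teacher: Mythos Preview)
Your proposal is correct and follows the paper's own approach: the paper's proof simply says that the first equality comes from $I\in\overline{\GARI}(\Gamma)$ together with Proposition \ref{prop:gp-like, Lie-like}, and that the second equality is by definition. You have merely spelled out the implicit step that $I$ is symmetral (which the paper takes as evident), and your alternative direct computation of $\shmap(I)(\omega;\eta)=\Sh{\omega}{\eta}{\emptyset}$ is a welcome but inessential bonus.
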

\begin{proof}
The first equality is from $I\in\overline{\GARI}(\Gamma)$ and Proposition \ref{prop:gp-like, Lie-like}, the second equality is from the definition of the both side.
\end{proof}
We prove two important lemmas.
\begin{lem}[{\cite[Lemma 5.1]{Sau}}]\label{lem:shuffle map is alg. hom.}
The map $\mathpzc{Sh}$ is an algebra homomorphism, that is, we have
\begin{equation}\label{eqn:shuffle map is alg. hom.}
	\shmap(M\times N)=\shmap(M)\times \shmap(N),
\end{equation}
for $M,N\in\overline{\BIMU}(\Gamma)$.
\end{lem}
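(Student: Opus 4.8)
The plan is to verify the identity $\shmap(M\times N)=\shmap(M)\times\shmap(N)$ componentwise, i.e.\ to evaluate both sides at an arbitrary pair $(\omega;\eta)$ with $\omega,\eta\in V_\Z^\bullet$ and check they agree. The left-hand side at $(\omega;\eta)$ is, by definition of $\shmap$ and of the mould product on $\overline{\BIMU}(\Gamma)$,
\begin{equation*}
	\sum_{\alpha\in V_\Z^\bullet}\Sh{\omega}{\eta}{\alpha}(M\times N)(\alpha)
	=\sum_{\alpha\in V_\Z^\bullet}\Sh{\omega}{\eta}{\alpha}\sum_{\alpha=\alpha_1\alpha_2}M(\alpha_1)N(\alpha_2).
\end{equation*}
The right-hand side at $(\omega;\eta)$ is, by definition of the dimould product,
\begin{equation*}
	\sum_{\substack{\omega=\omega_1\omega_2\\ \eta=\eta_1\eta_2}}\shmap(M)(\omega_1;\eta_1)\,\shmap(N)(\omega_2;\eta_2)
	=\sum_{\substack{\omega=\omega_1\omega_2\\ \eta=\eta_1\eta_2}}\Bigl(\sum_{\alpha_1}\Sh{\omega_1}{\eta_1}{\alpha_1}M(\alpha_1)\Bigr)\Bigl(\sum_{\alpha_2}\Sh{\omega_2}{\eta_2}{\alpha_2}N(\alpha_2)\Bigr).
\end{equation*}
So the whole lemma reduces to the purely combinatorial identity
\begin{equation*}
	\sum_{\alpha=\alpha_1\alpha_2}\Sh{\omega}{\eta}{\alpha_1\alpha_2}
	=\sum_{\substack{\omega=\omega_1\omega_2\\ \eta=\eta_1\eta_2}}\Sh{\omega_1}{\eta_1}{\alpha_1}\Sh{\omega_2}{\eta_2}{\alpha_2}
\end{equation*}
for each fixed pair $(\alpha_1,\alpha_2)$, which is exactly the $r=2$ case of the shuffle-coefficient deconcatenation formula \eqref{eqn:shuffle coefficient} in Lemma \ref{lem:shuffle coefficient} (applied over $V_\Z^\bullet$ rather than $U_\Z^\bullet$, which is harmless since \eqref{eqn:shuffle product} and \eqref{eqn:shuffle product, coefficient def} have the identical form in both alphabets).

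Concretely, I would first fix $\omega,\eta\in V_\Z^\bullet$ and expand $\shmap(M\times N)(\omega;\eta)$ as above, then interchange the two finite sums so that the outer sum runs over ordered pairs $(\alpha_1,\alpha_2)\in V_\Z^\bullet\times V_\Z^\bullet$ with the weight $\Sh{\omega}{\eta}{\alpha_1\alpha_2}$. Next I would apply Lemma \ref{lem:shuffle coefficient} to rewrite $\Sh{\omega}{\eta}{\alpha_1\alpha_2}$ as $\sum_{\omega=\omega_1\omega_2,\ \eta=\eta_1\eta_2}\Sh{\omega_1}{\eta_1}{\alpha_1}\Sh{\omega_2}{\eta_2}{\alpha_2}$, and finally re-sum: pushing the sum over $(\alpha_1,\alpha_2)$ inside factors the expression as a product of an $M$-sum over $\alpha_1$ and an $N$-sum over $\alpha_2$, which is precisely $\shmap(M)(\omega_1;\eta_1)\,\shmap(N)(\omega_2;\eta_2)$ summed over the deconcatenations of $\omega$ and $\eta$ — i.e.\ $(\shmap(M)\times\shmap(N))(\omega;\eta)$. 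One should also separately (and trivially) treat the boundary cases where $\omega$ or $\eta$ is empty, using $\Sh{\emptyset}{\eta}{\eta}=1$ and $\Sh{\omega}{\emptyset}{\omega}=1$ and all other such coefficients zero.

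The only real subtlety — and the one point I would make sure to state — is that Lemma \ref{lem:shuffle coefficient} as quoted is phrased for $U_\Z^\bullet$, whereas here we need it for $V_\Z^\bullet$ with the same (ordinary) shuffle product $\shuffle$, not the deformed $\shuffle_*$; since the recursive definition \eqref{eqn:shuffle product} of $\shuffle$ on $\mathcal A_V$ is word-for-word the same as on $\mathcal A_U$, the proof of \eqref{eqn:shuffle coefficient} carries over verbatim, so no new argument is needed. Aside from that, everything is a finite reindexing of sums; there is no analytic or convergence issue because for fixed $\omega,\eta$ only finitely many $\alpha$ have nonzero shuffle coefficient. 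Hence I do not anticipate a genuine obstacle — the "hard part" is merely bookkeeping the deconcatenation-versus-shuffle reindexing cleanly and not conflating $\shuffle$ with $\shuffle_*$.
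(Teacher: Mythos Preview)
Your proposal is correct and follows essentially the same route as the paper: expand $\shmap(M\times N)(\omega;\eta)$ by the definitions, apply the $r=2$ case of the deconcatenation identity \eqref{eqn:shuffle coefficient}, and re-sum to recognize $(\shmap(M)\times\shmap(N))(\omega;\eta)$. Your remark that \eqref{eqn:shuffle coefficient} is stated over $U_\Z^\bullet$ but needed over $V_\Z^\bullet$ is a fair caveat the paper leaves implicit, and the boundary cases need no separate treatment since the general computation already covers them.
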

\begin{proof}
By using the definition of the map $\shmap$ and the product $\times$, we have
\begin{align*}
	\shmap(M\times N) (\omega;\eta)
	&=\sum_{\alpha\in V_\Z^\bullet}\Sh{\omega}{\eta}{\alpha}
	\sum_{\alpha=\alpha_1\alpha_2} M(\alpha_1)N(\alpha_2) \\
	&=\sum_{\alpha_1,\alpha_2\in V_\Z^\bullet}\Sh{\omega}{\eta}{\alpha_1\alpha_2}
	M(\alpha_1)N(\alpha_2).
\intertext{By using \eqref{eqn:shuffle coefficient} for $r=2$, we get}
	&=\sum_{\alpha_1,\alpha_2\in V_\Z^\bullet}
	\sum_{\substack{
	\omega=\omega_1\omega_2 \\	
	\eta=\eta_1\eta_2	}}
	\Sh{\omega_1}{\eta_1}{\alpha_1}\Sh{\omega_2}{\eta_2}{\alpha_2}
	M(\alpha_1)N(\alpha_2) \\
	&=\sum_{\substack{
	\omega=\omega_1\omega_2 \\	
	\eta=\eta_1\eta_2	}} \shmap(M)(\omega_1;\eta_1)\shmap(N)(\omega_2;\eta_2) \\
	&=(\shmap(M)\times \shmap(N))(\omega;\eta).
\end{align*}
Hence, we finish the proof.
\end{proof}
\begin{remark}\label{rem:gp-like, Lie-like of il, is}
We define the $\Q$-linear map $\shmap_*:\overline{\BIMU}(\Gamma) \rightarrow\overline{\DIMU}(\Gamma)$ by
\begin{align}\label{eqn:shuffle* map}
	\shmap_*(M)
	=\left(\mathpzc{Sh}_*(M)(\omega;\eta)\right)_{\omega,\eta}
	:=\left(\sum_{\alpha\in V_\Z^\bullet}\Shstar{\omega}{\eta}{\alpha}M(\alpha)\right)_{\omega,\eta},
\end{align}
for $M\in\overline{\BIMU}(\Gamma)$.
Then we get the following two:
\begin{align*}
	{\rm (i).\ } &M\in\overline{\ARI}(\Gamma)_\il\Longleftrightarrow
	\mathpzc{Sh}_*(M)=M\otimes I+I\otimes M, \\
	{\rm (ii).\ } &M\in\overline{\GARI}(\Gamma)_\is\Longleftrightarrow
	\mathpzc{Sh}_*(M)=M\otimes M.
\end{align*}
Moreover, the map $\shmap_*$ forms an algebra homomorphism.
\end{remark}

\begin{lem}[{\cite[(5.7)]{Sau}}]\label{lem:tensor is commu.}
For $M_1,M_2,N_1,N_2\in\overline{\BIMU}(\Gamma)$, we have
\begin{equation}\label{eqn:tensor is commu.}
	(M_1\otimes M_2)\times(N_1\otimes N_2)
	=(M_1\times N_1)\otimes(M_2\times N_2).
\end{equation}
\end{lem}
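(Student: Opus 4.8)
The plan is to prove the identity \eqref{eqn:tensor is commu.} by a direct componentwise computation, using only the definition of the dimould product $\times$ from \eqref{eqn:2.2.1} (equivalently, its reformulation
$(A^{\bullet,\bullet}\times B^{\bullet,\bullet})(\omega;\eta)=\sum_{\omega=\omega_1\omega_2,\ \eta=\eta_1\eta_2}A^{\bullet,\bullet}(\omega_1;\eta_1)B^{\bullet,\bullet}(\omega_2;\eta_2)$)
and the definition of the embedding $i$, namely $i(M\otimes N)(\omega;\eta)=M(\omega)N(\eta)$. First I would fix words $\omega,\eta\in V_\Z^\bullet$ (the $\overline{\BIMU}$ case; the $\BIMU$ case is identical with $U_\Z^\bullet$) and expand the left-hand side:
\begin{align*}
	\bigl((M_1\otimes M_2)\times(N_1\otimes N_2)\bigr)(\omega;\eta)
	&=\sum_{\substack{\omega=\omega_1\omega_2\\ \eta=\eta_1\eta_2}}
	(M_1\otimes M_2)(\omega_1;\eta_1)\,(N_1\otimes N_2)(\omega_2;\eta_2) \\
	&=\sum_{\substack{\omega=\omega_1\omega_2\\ \eta=\eta_1\eta_2}}
	M_1(\omega_1)M_2(\eta_1)N_1(\omega_2)N_2(\eta_2).
\end{align*}

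Next I would expand the right-hand side. Since $M_1\times N_1$ and $M_2\times N_2$ are ordinary mould products in $\overline{\BIMU}(\Gamma)$, the definition of $\times$ there gives $(M_1\times N_1)(\omega)=\sum_{\omega=\omega_1\omega_2}M_1(\omega_1)N_1(\omega_2)$ and $(M_2\times N_2)(\eta)=\sum_{\eta=\eta_1\eta_2}M_2(\eta_1)N_2(\eta_2)$, whence
\begin{align*}
	\bigl((M_1\times N_1)\otimes(M_2\times N_2)\bigr)(\omega;\eta)
	&=(M_1\times N_1)(\omega)\,(M_2\times N_2)(\eta) \\
	&=\Bigl(\sum_{\omega=\omega_1\omega_2}M_1(\omega_1)N_1(\omega_2)\Bigr)
	\Bigl(\sum_{\eta=\eta_1\eta_2}M_2(\eta_1)N_2(\eta_2)\Bigr).
\end{align*}
Distributing the product of the two sums over a single double sum indexed by the pair of decompositions $\omega=\omega_1\omega_2$, $\eta=\eta_1\eta_2$, and commuting the scalar factors (everything lives in the commutative field $\mathcal K$, or $\Q(x_1,\dots)$), yields exactly the expression obtained for the left-hand side. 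Since $\omega,\eta$ were arbitrary, the two dimoulds agree componentwise, proving \eqref{eqn:tensor is commu.}.

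There is essentially no obstacle here: the only thing to be a little careful about is the bookkeeping of the index sets — the dimould product splits each of the two slots $\omega$ and $\eta$ independently into two factors, which is precisely the data of an ordered pair of decompositions, and this is the same index set that appears when one expands the product of the two single-slot mould-product sums. I would also remark that \eqref{eqn:tensor is commu.} immediately generalizes to the "$\shuffle_*$-tensor" setting and, by induction on the number of factors, shows that $i$ is a morphism of algebras onto its image; but for the present purpose the two-factor statement suffices, and it is what gets combined with Lemma \ref{lem:shuffle map is alg. hom.} and Proposition \ref{prop:gp-like, Lie-like} in \S \ref{subsec:Definition of exp}.
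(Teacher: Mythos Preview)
Your proof is correct and follows exactly the approach indicated in the paper, which simply states ``By the definition of the product $\times$, it is easy to prove this equation.'' You have merely written out the straightforward componentwise computation that the paper leaves to the reader.
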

\begin{proof}
By the definition of the product $\times$, it is easy to prove this equation.
\end{proof}

\begin{prop}
The following two hold:
\footnote{In \cite[Proposition 5.1]{Sau}, it is proved that the pair $(\overline{\GARI}(\Gamma)_\as,\times)$ forms a subgroup of $(\overline{\GARI}(\Gamma),\times)$ and the pair $(\overline{\ARI}(\Gamma)_\al,[,])$ forms a Lie subalgebra of $(\overline{\ARI}(\Gamma),[,])$.}
\begin{enumerate}
\renewcommand{\labelenumi}{(\alph{enumi}).}
	\item The pairs $(\overline{\GARI}(\Gamma)_\as,\times)$ and $(\overline{\GARI}(\Gamma)_\is,\times)$ form subgroups of $(\overline{\GARI}(\Gamma),\times)$.
	\item The pairs $(\overline{\ARI}(\Gamma)_\al,[,])$ and $(\overline{\ARI}(\Gamma)_\il,[,])$ form Lie subalgebras of $(\overline{\ARI}(\Gamma),[,])$.
\end{enumerate}
\end{prop}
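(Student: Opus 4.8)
The plan is to derive both parts formally from the dictionary between moulds and dimoulds. The key inputs are: Proposition~\ref{prop:gp-like, Lie-like} and Remark~\ref{rem:gp-like, Lie-like of il, is}, which translate symmetrality/alternality (resp.\ symmetrility/alternility) into the dimould identities $\shmap(M)=M\otimes M$ and $\shmap(M)=M\otimes I+I\otimes M$ (resp.\ the same with $\shmap_*$ in place of $\shmap$); the fact that $\shmap$ and $\shmap_*$ are $\Q$-algebra homomorphisms (Lemma~\ref{lem:shuffle map is alg. hom.} and Remark~\ref{rem:gp-like, Lie-like of il, is}) sending $I$ to the unit $I^{\bullet,\bullet}$ (Corollary~\ref{cor:2.3.1}); and the factorization identity of Lemma~\ref{lem:tensor is commu.}. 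Since in each part the two moulds are handled by identical arguments (one using $\shmap$, the other $\shmap_*$), I describe only the $\shmap$-versions. In every case the mould produced automatically lies in $\overline{\GARI}(\Gamma)$ resp.\ $\overline{\ARI}(\Gamma)$, for the trivial reason that $(M\times N)(\emptyset)=M(\emptyset)N(\emptyset)$ and $[M,N](\emptyset)=0$.

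For part (a), first take $M,N\in\overline{\GARI}(\Gamma)_\as$. Then, by Lemma~\ref{lem:shuffle map is alg. hom.} and then Lemma~\ref{lem:tensor is commu.},
\[
\shmap(M\times N)=\shmap(M)\times\shmap(N)=(M\otimes M)\times(N\otimes N)=(M\times N)\otimes(M\times N),
\]
so $M\times N$ is symmetral. For the inverse, $M^{-1}$ already exists inside the group $(\overline{\GARI}(\Gamma),\times)$; applying $\shmap$ to $M\times M^{-1}=M^{-1}\times M=I$ and using Corollary~\ref{cor:2.3.1} shows that $\shmap(M^{-1})$ is a two-sided inverse of $\shmap(M)=M\otimes M$ in the associative unital algebra $\overline{\DIMU}(\Gamma)$. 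On the other hand, Lemma~\ref{lem:tensor is commu.} gives $(M^{-1}\otimes M^{-1})\times(M\otimes M)=(M^{-1}\times M)\otimes(M^{-1}\times M)=I\otimes I=I^{\bullet,\bullet}$ and symmetrically on the other side, so $M^{-1}\otimes M^{-1}$ is also a two-sided inverse of $M\otimes M$; by uniqueness of inverses in a monoid, $\shmap(M^{-1})=M^{-1}\otimes M^{-1}$, i.e.\ $M^{-1}$ is symmetral. Since $I$ is symmetral by Corollary~\ref{cor:2.3.1}, this proves $(\overline{\GARI}(\Gamma)_\as,\times)$ is a subgroup. Re-running the same computation with $\shmap_*$, the characterization $M\in\overline{\GARI}(\Gamma)_\is\Longleftrightarrow\shmap_*(M)=M\otimes M$, the homomorphism property of $\shmap_*$ from Remark~\ref{rem:gp-like, Lie-like of il, is}, and $\shmap_*(I)=I^{\bullet,\bullet}$ (immediate from the definitions), gives the statement for $(\overline{\GARI}(\Gamma)_\is,\times)$.

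For part (b), closure of $\overline{\ARI}(\Gamma)_\al$ under $\Q$-linear combinations is immediate, since $\shmap$ and $\alpha\mapsto\alpha\otimes I+I\otimes\alpha$ are both $\Q$-linear, so the condition of Proposition~\ref{prop:gp-like, Lie-like}(i) is preserved. For closure under the bracket, take $M,N\in\overline{\ARI}(\Gamma)_\al$ and expand
\[
\shmap(M\times N)=\shmap(M)\times\shmap(N)=(M\otimes I+I\otimes M)\times(N\otimes I+I\otimes N)
\]
into four terms by Lemma~\ref{lem:tensor is commu.}, namely $(M\times N)\otimes I$, $M\otimes N$, $N\otimes M$, and $I\otimes(M\times N)$. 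Subtracting the analogous expansion of $\shmap(N\times M)$ and using $\Q$-linearity of $\shmap$, the two cross terms $M\otimes N$ and $N\otimes M$ cancel, leaving $\shmap([M,N])=[M,N]\otimes I+I\otimes[M,N]$, so $[M,N]$ is alternal. The same argument with $\shmap_*$ in place of $\shmap$ and Remark~\ref{rem:gp-like, Lie-like of il, is}(i) in place of Proposition~\ref{prop:gp-like, Lie-like}(i) handles $\overline{\ARI}(\Gamma)_\il$.

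I do not anticipate a real obstacle: the whole argument is formal once the homomorphism properties of $\shmap$ and $\shmap_*$ are in hand. The only points deserving a sentence of care are (i) the inverse step in part (a), where one argues via uniqueness of inverses in the monoid $(\overline{\DIMU}(\Gamma),\times)$ rather than by "dividing", having first observed that $M^{-1}$ lies in $\overline{\GARI}(\Gamma)$ precisely because the latter is a group; and (ii) the fact that, although $(\mathcal A_V,\shuffle_*)$ is non-associative, $\shmap_*$ is nonetheless an algebra homomorphism into the \emph{associative} algebra $\overline{\DIMU}(\Gamma)$ --- this is exactly the content of Remark~\ref{rem:gp-like, Lie-like of il, is} that we invoke, so no independent verification is needed.
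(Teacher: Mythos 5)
Your proposal is correct and follows essentially the same route as the paper's proof: the dimould characterizations of Proposition \ref{prop:gp-like, Lie-like} and Remark \ref{rem:gp-like, Lie-like of il, is}, the homomorphism property of $\shmap$ and $\shmap_*$, and the factorization of Lemma \ref{lem:tensor is commu.}. You are in fact slightly more thorough than the paper, whose proof only verifies closure under $\times$ and under the bracket; your extra step establishing closure under inverses via uniqueness of two-sided inverses in the associative unital algebra $(\overline{\DIMU}(\Gamma),\times)$ (together with $\shmap(I)=I^{\bullet,\bullet}$) is a valid and welcome completion of the subgroup claim.
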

\begin{proof}
Let $M,N\in\overline{\GARI}(\Gamma)_\as$.
By using Lemma \ref{lem:shuffle map is alg. hom.} and Lemma \ref{lem:tensor is commu.}, we have
\begin{align*}
	\shmap(M\times N) = \shmap(M)\times \shmap(N)
	 = (M\times M)\otimes (N\times N) = (M\times N)\otimes (M\times N).
\end{align*}
By using Proposition \ref{prop:gp-like, Lie-like}.(ii), we get $M\times N\in\overline{\GARI}(\Gamma)_\as$.
Let $M,N\in \overline{\ARI}(\Gamma)_\al$.
Then we have
\begin{align*}
	&\shmap([M,N]) \\
	&= \shmap(M)\times \shmap(N)-\shmap(N)\times \shmap(M) \\
	&= (M\otimes I+ I\otimes M)\times (N\otimes I+ I\otimes N)
	- (N\otimes I+ I\otimes N)\times (M\otimes I+ I\otimes M) \\
	&= (M\times N)\otimes I + M\otimes N + N\otimes M + I\otimes (M\times N) \\
	&\quad- (N\times M)\otimes I - N\otimes M - M\otimes N - I\otimes (N\times M) \\
	&= [M,N]\otimes I + I\otimes [M,N].
\end{align*}
Hence, we get $[M,N]\in \overline{\ARI}(\Gamma)_\al$.
By using the map $\shmap_*$, we can prove that $(\overline{\GARI}(\Gamma)_\is,\times)$ forms a subgroup and $(\overline{\ARI}(\Gamma)_\il,[,])$ forms a Lie subalgebra similarly.
\end{proof}

\subsection{Exponential map $\exp_\times$}\label{subsec:Definition of exp}

\begin{definition}
We define the map $\exp_\times:\ARI(\Gamma) \rightarrow \GARI(\Gamma)$ by
\begin{equation}\label{eqn:exp}
	\exp_\times(A)
	:=\sum_{k\geq0} \frac{1}{k!}A^{\times k},
\end{equation}
for $A\in\ARI(\Gamma)$. Here, we use the notation $A^{\times k}:=\underbrace{A\times \cdots \times A}_{k}$.
\end{definition}
There is the inverse map $\log_\times:\GARI(\Gamma) \rightarrow \ARI(\Gamma)$ of map $\exp_\times$ defined by
\begin{equation}\label{eqn:log}
\log_\times(S):=\sum_{k\geq1}\frac{(-1)^{k-1}}{k}(S-I)^{\times k},
\end{equation}
for $S\in\GARI(\Gamma)$, and so we have $\exp_\times\circ\log_\times=\id_{\GARI(\Gamma)}$ and $\log_\times\circ\exp_\times=\id_{\ARI(\Gamma)}$.

We note that
\begin{equation}\label{eqn:exp on commutative case}
	\exp_\times(M+ N)=\exp_\times(M)\times \exp_\times(N)
\end{equation}
for $M,N\in\ARI(\Gamma)$ with $M\times N=N\times M$.
\begin{remark}
We also have the following exponential maps:
\renewcommand{\labelenumi}{(\alph{enumi}).}
\begin{enumerate}
	\item For $\overline{\ARI}(\Gamma)$ and $\overline{\GARI}(\Gamma)$, the similar exponential and the similar logarithm maps are defined.
	\item By replacing the product $\times$ of $\BIMU(\Gamma)$ in \eqref{eqn:exp} and \eqref{eqn:log} to the product of $\DIMU(\Gamma)$, the exponential and the logarithm maps of dimoulds are defined.
\end{enumerate}
For our simplicity, we denote these map by the same symbols $\exp_\times$ and $\log_\times$.
\end{remark}

\begin{thm}\label{thm:exp(ARIal)=GARIas etc}
The following three hold:
\footnote{The equation \eqref{eqn:exp:ARIal->GARIas} is proved in \cite[Proposition 5.1]{Sau}.}
\begin{align}\label{eqn:exp:ARIal->GARIas}
	&\exp_\times(\ARI(\Gamma)_\al)=\GARI(\Gamma)_\as, \\
	\label{eqn:exp:barARIal->barGARIas}
	&\exp_\times(\overline{\ARI}(\Gamma)_\al)=\overline{\GARI}(\Gamma)_\as, \\
	\label{eqn:exp:barARIil->barGARIis}
	&\exp_\times(\overline{\ARI}(\Gamma)_\il)=\overline{\GARI}(\Gamma)_\is.
\end{align}
\end{thm}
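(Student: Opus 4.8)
The plan is to reduce all three statements to the dimould reformulation provided by Proposition \ref{prop:gp-like, Lie-like} and Remark \ref{rem:gp-like, Lie-like of il, is}, and then exploit that $\shmap$ and $\shmap_*$ are algebra homomorphisms (Lemma \ref{lem:shuffle map is alg. hom.}) which intertwine the exponential maps. Concretely, for \eqref{eqn:exp:barARIal->barGARIas}: if $M\in\overline{\ARI}(\Gamma)_\al$, then $\shmap(M)=M\otimes I+I\otimes M$, and the two summands $M\otimes I$ and $I\otimes M$ commute under $\times$ by Lemma \ref{lem:tensor is commu.} (their product in either order equals $M\otimes M$). Hence, applying $\shmap$ to $\exp_\times(M)$ and using that $\shmap$ is a $\times$-homomorphism, $\shmap(\exp_\times(M))=\exp_\times(\shmap(M))=\exp_\times(M\otimes I+I\otimes M)=\exp_\times(M\otimes I)\times\exp_\times(I\otimes M)$ by \eqref{eqn:exp on commutative case}. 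The key computation is that $\exp_\times(M\otimes I)=\exp_\times(M)\otimes I$ and $\exp_\times(I\otimes M)=I\otimes\exp_\times(M)$, which follows from Lemma \ref{lem:tensor is commu.} by induction on the power (since $(M\otimes I)^{\times k}=M^{\times k}\otimes I$), together with $\shmap(I)=I\otimes I$ from Corollary \ref{cor:2.3.1}. Then $(\exp_\times(M)\otimes I)\times(I\otimes\exp_\times(M))=\exp_\times(M)\otimes\exp_\times(M)$, again by Lemma \ref{lem:tensor is commu.}, so $\shmap(\exp_\times(M))=\exp_\times(M)\otimes\exp_\times(M)$, which by Proposition \ref{prop:gp-like, Lie-like}.(ii) says exactly $\exp_\times(M)\in\overline{\GARI}(\Gamma)_\as$. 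This gives the inclusion $\exp_\times(\overline{\ARI}(\Gamma)_\al)\subseteq\overline{\GARI}(\Gamma)_\as$.

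For the reverse inclusion I would run the same argument through $\log_\times$. Given $S\in\overline{\GARI}(\Gamma)_\as$, so $\shmap(S)=S\otimes S$, write $S\otimes S=(S\otimes I)\times(I\otimes S)$ with the two factors commuting, hence $\log_\times(\shmap(S))=\log_\times(S\otimes I)+\log_\times(I\otimes S)=\log_\times(S)\otimes I+I\otimes\log_\times(S)$, where I use that $\log_\times$ of a ``single-slot'' dimould stays in that slot (by the same inductive argument on $(S-I)^{\times k}$, noting $S\otimes I-I^{\bullet,\bullet}=(S-I)\otimes I$ by Corollary \ref{cor:2.3.1}). Since $\shmap$ is an algebra homomorphism it commutes with $\log_\times$, so $\shmap(\log_\times(S))=\log_\times(S)\otimes I+I\otimes\log_\times(S)$, i.e. $\log_\times(S)\in\overline{\ARI}(\Gamma)_\al$ by Proposition \ref{prop:gp-like, Lie-like}.(i). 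Applying $\exp_\times$ gives $S\in\exp_\times(\overline{\ARI}(\Gamma)_\al)$. The statement \eqref{eqn:exp:barARIil->barGARIis} is proved verbatim with $\shmap$ replaced by $\shmap_*$, using Remark \ref{rem:gp-like, Lie-like of il, is} (which records that $\shmap_*$ is also an algebra homomorphism and gives the analogous dimould characterizations of $\overline{\ARI}(\Gamma)_\il$ and $\overline{\GARI}(\Gamma)_\is$); note Lemma \ref{lem:tensor is commu.} is about the product $\times$ of dimoulds and is insensitive to which map produced them. The first statement \eqref{eqn:exp:ARIal->GARIas} is the already-cited result of \cite[Proposition 5.1]{Sau}, proved the same way with $\BIMU$ in place of $\overline{\BIMU}$.

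The main obstacle, such as it is, is the bookkeeping lemma that $\exp_\times$ and $\log_\times$ respect the tensor decomposition, i.e. that $\exp_\times(M\otimes I)=\exp_\times(M)\otimes I$ and similarly for $\log_\times$ and for the second slot. This is where Lemma \ref{lem:tensor is commu.} and Corollary \ref{cor:2.3.1} do the real work: one checks $(M\otimes I)^{\times k}=M^{\times k}\otimes I$ by induction (the base case $k=0$ being $I^{\bullet,\bullet}=I\otimes I$), and then sums the series. A secondary point needing care is the justification of \eqref{eqn:exp on commutative case} in the dimould algebra, but since $(\mathcal M_2(\mathcal F;\Gamma),\times)$ is an associative unital algebra the usual formal power series identity applies once the two factors are known to commute. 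Everything else is a direct translation through the equivalences already established, so no genuinely new idea is required beyond assembling these pieces.
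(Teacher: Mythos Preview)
Your proposal is correct and follows essentially the same route as the paper's proof: reduce to the dimould characterizations via $\shmap$ (resp.\ $\shmap_*$), use that these maps are $\times$-homomorphisms to pass $\exp_\times$/$\log_\times$ through them, and exploit the commutativity of $M\otimes I$ and $I\otimes M$ together with $\exp_\times(M\otimes I)=\exp_\times(M)\otimes I$. You spell out the bookkeeping (the inductive check of $(M\otimes I)^{\times k}=M^{\times k}\otimes I$ and the use of Corollary \ref{cor:2.3.1}) a bit more explicitly than the paper does, but the argument is the same.
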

\begin{proof}
Because the above three are similarly shown, we only prove $\exp_\times(\ARI(\Gamma)_\al)=\GARI(\Gamma)_\as$.\\
($\subset$): Let $A\in\ARI(\Gamma)_\al$. Then we will show $\exp_\times(A)\in\GARI(\Gamma)_\as$.
By using Lemma \ref{lem:shuffle map is alg. hom.} and Proposition \ref{prop:gp-like, Lie-like}, we have
\begin{align*}
	\mathpzc{Sh}(\exp_\times(A))
	=& \exp_\times(\mathpzc{Sh}(A)) \\
	=& \exp_\times(A\otimes I+I\otimes A).
\intertext{Because we have $(A\otimes I)\times(I\otimes A)=(I\otimes A)\times(A\otimes I)$, by \eqref{eqn:exp on commutative case} we get}
	=& \exp_\times(A\otimes I)\times \exp(I\otimes A) \\
	=& (\exp_\times(A\otimes I)\times(I\otimes \exp_\times(A)) \\
	=& \exp_\times(A)\otimes \exp_\times(A).
\end{align*}
Hence, by Proposition \ref{prop:gp-like, Lie-like}, we obtain $\exp(A)\in\GARI(\Gamma)_\as$.\\
($\supset$): Let $S\in\GARI(\Gamma)_\as$. Similarly, we have
\begin{align*}
	\mathpzc{Sh}(\log_\times(S))
	=& \log_\times(\mathpzc{Sh}(S)) \\
	=& \log_\times(S\otimes S) \\
	=& \log_\times((I\otimes S)\times (S\otimes I)) \\
	=& \log_\times(I\otimes S)+\log_\times(S\otimes I) \\
	=& I\otimes \log_\times(S)+\log_\times(S)\otimes I.
\end{align*}
Hence, we get $\log_\times(S)\in\ARI(\Gamma)_\al$.
\end{proof}

\section{Automorphisms}\label{sec:Automorphism}

In this section, we recall the definition of the map $\ganit_v(B)$ (for $B\in\overline{\GARI}(\Gamma)$) introduced in \cite[(2.36)]{E-flex}, and we prove several properties of this map.
In \S \ref{subsec:Definition of ganit}, we review its definition (Definition \ref{def:ganit}) and we prove  that the map $\ganit_v(B)$ forms an algebra automorphism on $(\overline{\BIMU}(\Gamma),\times)$ (Proposition \ref{prop:alg aut of ganit B}), and that this map induces a group automorphism $(\overline{\GARI}(\Gamma),\times)$ and induces a Lie algebra automorphism on $(\overline{\ARI}(\Gamma),[,])$ (Theorem \ref{thm: ganit(B) is automorphism}).
As a corollary, we get a commutativity of $\ganit_v(\pic)$ and $\exp_\times$ (Corollary \ref{cor:commutative diagram}).
In \S \ref{subsec:Algebraic preparation}, we introduce elements $g_B(\vecv_r) \in \mathcal K\langle V_\Z \rangle$ (Definition \ref{def:definition of map g}) as an analogue of $\ganit_v(B)$.
In \S \ref{subsec:map between alternal and alternil}, we prove the recurrence formula (Proposition \ref{prop:recurrence formula of map g}) of $g_B(\vecv_r)$ for $B=\pic$.
By using this recurrence formula, we prove Theorem \ref{thm: ganit(B) is automorphism between al and il or as and is}, that is, the map $\ganit_v(B)$ induces a group isomorphism from $\overline{\GARI}(\Gamma)_\as$ to $\overline{\GARI}(\Gamma)_\is$ and induces a Lie algebra isomorphism from $\overline{\ARI}(\Gamma)_\al$ to $\overline{\ARI}(\Gamma)_\il$.
As a corollary, we obtain its commutativity (Corollary \ref{cor:commutative diagram of al,il,as,is}) with the exponential map $\exp_\times$ and we obtain a bijection between $\ARI(\Gamma)_{\underline{\al}*\underline{\al}}$ and $\ARI(\Gamma)_{\underline{\al}*\underline{\il}}$ (Theorem \ref{thm:adari(pal) induce bijection}).

\subsection{Definition of $\ganit_v(B)$}\label{subsec:Definition of ganit}

Hereafter, for our simplicity, we denote $\omega_i:=\binom{\sigma_i}{v_i}$ for $i\geq1$ and we denote $\vecv_r=(\omega_1,\dots,\omega_r)$ for $r\geq1$.
\begin{definition}[{\cite[(2.36)]{E-flex}, \cite[(2.7.2)]{S-ARIGARI}}]\label{def:ganit}
Let $B\in\overline{\GARI}(\Gamma)$. We define the $\Q$-linear map $\ganit_v(B):\overline{\BIMU}(\Gamma) \rightarrow \overline{\BIMU}(\Gamma)$ by $(\ganit_v(B)(A))(\vecv_0):=A(\emptyset)$ and 
\begin{align}\label{eqn:def of ganit}
	(\ganit_v(B)(A))(\vecv_r):=\sum_{s\geq1}\sum_{\substack{
		\vecv_r=a_1b_1\cdots a_sb_s \\
		a_1,\dots,a_s\neq\emptyset \\
		b_1,\dots,b_{s-1}\neq\emptyset}}
	B(\lrflex{a_1}{b_1})\cdots B(\lrflex{a_s}{b_s})
	\ A(\ulflex{a_1}{b_1}\cdots \ulflex{a_s}{b_s}),
\end{align}
for $r\geq1$ and for $A\in\overline{\BIMU}(\Gamma)$.
\end{definition}

\begin{example}
Because the definition of $\ganit_v(B)(A)$ is complicated, we explicitly give some examples of $\ganit_v(B)(A)$.
\begin{align*}
	&(\ganit_v(B)(A))(\vecv_1)=A(\omega_1), \\
	&(\ganit_v(B)(A))(\vecv_2)
	=A(\omega_1,\omega_2)+B(\lrflex{\omega_1}{\omega_2})A(\ulflex{\omega_1}{\omega_2}), \\
	&(\ganit_v(B)(A))(\vecv_3) 
	=A(\omega_1,\omega_2,\omega_3)
		+B(\lrflex{\omega_2}{\omega_3})A(\ulflex{\omega_1,\omega_2}{\omega_3}) \\
	&\hspace{3.5cm}+B(\lrflex{\omega_1}{\omega_2})A(\ulflex{\omega_1}{\omega_2},\omega_3)
		+B(\lrflex{\omega_1}{(\omega_2,\omega_3)})A(\ulflex{\omega_1}{(\omega_2,\omega_3)}), \\
	&(\ganit_v(B)(A))(\vecv_4) \\
	&=A(\omega_1,\omega_2,\omega_3,\omega_4)
		+B(\lrflex{\omega_3}{\omega_4})A(\ulflex{\omega_1,\omega_2,\omega_3}{\omega_4}) \\
	&\quad+B(\lrflex{\omega_2}{\omega_3})A(\ulflex{\omega_1,\omega_2}{\omega_3},\omega_4)
		+B(\lrflex{\omega_2}{(\omega_3,\omega_4)})A(\ulflex{\omega_1,\omega_2}{(\omega_3,\omega_4)}) \\
	&\quad\quad\quad+B(\lrflex{\omega_1}{\omega_2})A(\ulflex{\omega_1}{\omega_2},\omega_3,\omega_4)
		+B(\lrflex{\omega_1}{\omega_2})B(\lrflex{\omega_3}{\omega_4})A(\ulflex{\ulflex{\omega_1}{\omega_2},\omega_3}{\omega_4}) \\
	&\quad\quad\quad\quad+B(\lrflex{\omega_1}{(\omega_2,\omega_3)})A(\ulflex{\omega_1}{(\omega_2,\omega_3)},\omega_4)
		+B(\lrflex{\omega_1}{(\omega_2,\omega_3,\omega_4)})A(\ulflex{\omega_1}{(\omega_2,\omega_3,\omega_4)}).
\end{align*}
\end{example}

\begin{remark}
Let $B\in\overline{\GARI}(\Gamma)$. By Definition \ref{def:ganit}, we have
\begin{align*}
	&\ganit_v(B)(A)(\emptyset)=A(\emptyset)=0, \\
	&\ganit_v(B)(S)(\emptyset)=S(\emptyset)=1,
\end{align*}
for $A\in\overline{\ARI}(\Gamma)$ and $S\in\overline{\GARI}(\Gamma)$.
Therefore, $\ganit_v(B)$ induces the $\Q$-linear maps on $\overline{\ARI}(\Gamma)$ and on $\overline{\GARI}(\Gamma)$.
\end{remark}

\begin{remark}\label{rem:ganit induce map on ARI and GARI}
Let $B,C\in\overline{\GARI}(\Gamma)$. Assume that we have $\ganit_v(B)(\ganit_v(C)(A))=A$ for any $A\in\overline{\BIMU}(\Gamma)$.
Then by the above examples, we get
\begin{align*}
	A(\omega_1,\omega_2)+\left\{B(\lrflex{\omega_1}{\omega_2})+C(\lrflex{\omega_1}{\omega_2})\right\}A(\ulflex{\omega_1}{\omega_2})
	=A(\omega_1,\omega_2).
\end{align*}
Therefore, we obtain $C(\omega_1)=-B(\omega_1)$. In general, by computing 
$$
\ganit_v(B)(\ganit_v(C)(A))=A,
$$
we see that the component $C(\vecv_r)$ can be algebraically expressed by using $B(\vecv_m)$ ($1\leq  m\leq r$).
Hence, for any $B\in\overline{\GARI(\Gamma)}$, there is the inverse map of $\ganit_v(B)$.

In \cite[\S 4.7]{E-flex}, it is mentioned that the inverse map of $\ganit_v(\pic)$ is given by
$$
\ganit_v(\pari\circ\anti(\pij)),
$$
where the map $\pari$ and $\anti$ are defined by
\begin{align*}
&\pari(M)^r\varia{\sigma_1,\ \dots,\ \sigma_r}{v_1,\ \dots,\ v_r}
:=(-1)^rM^r\varia{\sigma_1,\ \dots,\ \sigma_r}{v_1,\ \dots,\ v_r}, \\
&\anti(M)^r\varia{\sigma_1,\ \dots,\ \sigma_r}{v_1,\ \dots,\ v_r}
:=M^r\varia{\sigma_r,\ \dots,\ \sigma_1}{v_r,\ \dots,\ v_1}.
\end{align*}
In fact, by direct calculation, we can check that $\ganit_v(\pari\circ\anti(\pij))$ is the inverse map of $\ganit_v(\pic)$.
In \cite{B} and \cite{S-ARIGARI}, the mould $\poc\in\overline{\GARI}(\Gamma)$ is introduced by
\begin{equation}\label{eqn:def of poc}
\poc:=\pari\circ\anti(\pij),
\end{equation}
that is, $\poc(\vecv_0):=1$ and $\poc(\vecv_1):=-\frac{1}{v_1}$ and
$$
\poc(\vecv_r):=-\frac{1}{v_1(v_1-v_2)(v_2-v_3)\cdots(v_{r-1}-v_r)}.
$$
By using this mould $\poc$, the inverse map of $\ganit_v(\pic)$ is denoted by $\ganit_v(\poc)$.
\end{remark}

\begin{prop}\label{prop:alg aut of ganit B}
For any $B\in\overline{\GARI}(\Gamma)$, the map $\ganit_v(B)$ forms an algebra automorphism on $(\overline{\BIMU}(\Gamma),\times)$, that is, for any $A_1,A_2\in\overline{\BIMU}(\Gamma)$, we have
$$
\ganit_v(B)(A_1\times A_2)
=\ganit_v(B)(A_1)\times\ganit_v(B)(A_2).
$$
\end{prop}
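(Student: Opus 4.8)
The plan is to show that $\ganit_v(B)$ is multiplicative by unwinding both sides of the claimed identity on a fixed sequence $\vecv_r = (\omega_1,\dots,\omega_r)$ and matching the resulting sums term by term. First I would expand the left-hand side: by the definition of the product $\times$ on $\overline{\BIMU}(\Gamma)$ and the defining formula \eqref{eqn:def of ganit}, $\ganit_v(B)(A_1\times A_2)(\vecv_r)$ is a sum over decompositions $\vecv_r = a_1 b_1 \cdots a_s b_s$ (with the $a_i$ nonempty and $b_1,\dots,b_{s-1}$ nonempty), with weight $B(\lrflex{a_1}{b_1})\cdots B(\lrflex{a_s}{b_s})$, of $(A_1\times A_2)$ evaluated at $\ulflex{a_1}{b_1}\cdots\ulflex{a_s}{b_s}$; and the latter is itself a sum over all ways of cutting the word $\ulflex{a_1}{b_1}\cdots\ulflex{a_s}{b_s}$ into a left piece and a right piece, fed to $A_1$ and $A_2$ respectively. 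On the right-hand side, $\ganit_v(B)(A_1)\times\ganit_v(B)(A_2)$ evaluated at $\vecv_r$ is a sum over splittings $\vecv_r = \alpha\beta$ of $\ganit_v(B)(A_1)(\alpha)\cdot\ganit_v(B)(A_2)(\beta)$, each factor being again a $\ganit$-type sum over decompositions of $\alpha$ (resp. $\beta$).

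The key step is then a bijection between the indexing data of the two expansions. A decomposition $\vecv_r = a_1 b_1 \cdots a_s b_s$ together with a cut of $\ulflex{a_1}{b_1}\cdots\ulflex{a_s}{b_s}$ into two consecutive blocks of flexed letters corresponds, on the other side, to writing $\vecv_r = \alpha\beta$ where $\alpha$ collects the first blocks $a_1 b_1 \cdots$ and $\beta$ collects the remaining ones, \emph{except} that a cut falling in the \emph{middle} of a single flexed factor $\ulflex{a_i}{b_i}$ must be accounted for. Here the crucial structural fact is that the flexions are compatible with concatenation inside the "$a$"-slot: if $a_i = a_i' a_i''$ then $\ulflex{a_i}{b_i} = \ulflex{a_i'}{(a_i'' b_i)}\,\ulflex{a_i''}{b_i}$ and $\lrflex{a_i}{b_i}$ factors correspondingly through $\lrflex{a_i'}{(a_i'' b_i)}$ and $\lrflex{a_i''}{b_i}$ — this is precisely the kind of flexion identity recorded for $\pic$ in \eqref{eqn:pic,upperflex}--\eqref{eqn:pic,decomposition}, but here only the abstract flexion algebra is needed, not the specific mould $B$. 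Granting this, a cut of $\ulflex{a_1}{b_1}\cdots\ulflex{a_s}{b_s}$ into a left word of $k$ flexed letters and a right word of $s-k$ flexed letters — possibly splitting the $(k{+}1)$-st factor — produces exactly a pair (decomposition of $\alpha$, decomposition of $\beta$) indexing one term of the right-hand side, and the weights $B(\lrflex{\cdot}{\cdot})$ match on the nose.

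The main obstacle I anticipate is bookkeeping the boundary case cleanly: when the cut of the word $\ulflex{a_1}{b_1}\cdots\ulflex{a_s}{b_s}$ lands strictly inside some $\ulflex{a_i}{b_i}$, one must split $a_i = a_i' a_i''$ and also decide whether the separating sub-block $b_i$ (and the letters of $a_i''$, $b_i$ that now straddle the two halves) belongs to $\alpha$ or $\beta$; getting the nonemptiness conditions ($a$-parts nonempty, interior $b$-parts nonempty) to line up on both sides without double-counting or omission is the delicate point. I would handle this by first proving the concatenation–flexion identity as a separate lemma (or citing the flexion formalism of \cite{FK}), then setting up an explicit map from the left-hand indexing set to the right-hand one, exhibiting its inverse, and checking weight-preservation; the associativity of $\times$ on $\overline{\BIMU}(\Gamma)$ (Definition \ref{def:mould}) guarantees the two nested sums collapse correctly. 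Once the bijection is in place the equality of the two sums is immediate, and since both sides agree trivially on $\vecv_0$ (both give $A_1(\emptyset)A_2(\emptyset)$), this proves $\ganit_v(B)(A_1\times A_2) = \ganit_v(B)(A_1)\times\ganit_v(B)(A_2)$; combined with the existence of the inverse map $\ganit_v$ (as noted in Remark \ref{rem:ganit induce map on ARI and GARI}), this yields that $\ganit_v(B)$ is an algebra automorphism of $(\overline{\BIMU}(\Gamma),\times)$.
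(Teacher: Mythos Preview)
Your overall strategy is correct and coincides with the paper's: expand both sides at $\vecv_r$, and match (a decomposition $\vecv_r=a_1b_1\cdots a_sb_s$ together with a cut of the flexed word $\ulflex{a_1}{b_1}\cdots\ulflex{a_s}{b_s}$) on the left against (a splitting $\vecv_r=\alpha\beta$ together with independent decompositions of $\alpha$ and $\beta$) on the right. The paper packages the cut analysis as Lemma~\ref{lem: lemma for alg aut of ganit B}, separating cuts that fall between blocks from cuts that fall strictly inside some $\ulflex{a_p}{b_p}$, and then reindexes exactly as you outline.

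However, the flexion identity you invoke is misstated, and as written it would break your bijection. In the $v$-variable setting the flexion $\ulflex{a}{b}$ modifies only the \emph{last} letter of $a$ (cf.\ the identity $\ulflex{(u\omega)}{\eta}=(u)(\ulflex{\omega}{\eta})$ recorded in Notation~\ref{not:index symbols}), so for $a_i=a_i'a_i''$ with both pieces nonempty one has
\[
\ulflex{(a_i'a_i'')}{b_i}=a_i'\cdot\ulflex{a_i''}{b_i},
\qquad
\lrflex{(a_i'a_i'')}{b_i}=\lrflex{a_i''}{b_i},
\]
not $\ulflex{a_i}{b_i}=\ulflex{a_i'}{(a_i''b_i)}\,\ulflex{a_i''}{b_i}$. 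Your version is false whenever $\Gamma$ is nontrivial, and more importantly it cannot support the bijection: the left fragment $\ulflex{a_i'}{(a_i''b_i)}$ would have to be produced by $\ganit_v(B)(A_1)$ evaluated at $\alpha=a_1b_1\cdots a_{i-1}b_{i-1}a_i'$, yet $\alpha$ carries no information about $a_i''b_i$. With the correct identity everything lines up as in the paper: a cut of $\ulflex{a_p}{b_p}$ at $a_p=a_p'a_p''$ yields an $A_1$-argument ending in the bare word $a_p'$, matched on the right by taking $\alpha=\cdots a_{p-1}b_{p-1}a_p'$ with final block $(a_p';\emptyset)$ contributing weight $B(\emptyset)=1$; the $A_2$-argument starts with $\ulflex{a_p''}{b_p}$, matched by $\beta=a_p''b_p\cdots$ with first block $(a_p'';b_p)$ contributing $B(\lrflex{a_p''}{b_p})=B(\lrflex{a_p}{b_p})$, so the $B$-weights agree on the nose.
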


We use the following lemma to prove this proposition.
\begin{lem}\label{lem: lemma for alg aut of ganit B}
Let $A_1,A_2\in\overline{\BIMU}(\Gamma)$ and $s\geq1$. For $a_1,\dots,a_s\in V_\Z^\bullet\setminus\{\emptyset\}$ and for $b_1,\dots,b_{s-1}\in V_\Z^\bullet\setminus\{\emptyset\}$ and for $b_s\in V_\Z^\bullet$, we put $\omega=\ulflex{a_1}{b_1} \cdots \ulflex{a_s}{b_s}$. Then we have
\begin{align*}
	&(A_1\times A_2)(\omega) \\
	&=A_1(\omega)A_2(\emptyset) + A_1(\emptyset)A_2(\omega)
	+\sum_{p=1}^{s-1}
	A_1(\ulflex{a_1}{b_1}\cdots \ulflex{a_p}{b_p})
	A_2(\ulflex{a_{p+1}}{b_{p+1}}\cdots \ulflex{a_s}{b_s}) \\
	&\quad+ \sum_{p=1}^s\sum_{\substack{
		\ulflex{a_p}{b_p}=\alpha\beta \\
		\alpha,\beta\neq\emptyset}}
	A_1(\ulflex{a_1}{b_1}\cdots \ulflex{a_{p-1}}{b_{p-1}}\alpha)
	A_2(\beta\ulflex{a_{p+1}}{b_{p+1}}\cdots \ulflex{a_s}{b_s}).
\end{align*}
\end{lem}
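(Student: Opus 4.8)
The plan is to expand $(A_1 \times A_2)(\omega)$ directly from the definition of the product $\times$ and then reorganize the resulting sum according to where the splitting point of $\omega$ falls relative to the block decomposition $\omega = \ulflex{a_1}{b_1} \cdots \ulflex{a_s}{b_s}$. By definition,
\begin{align*}
	(A_1 \times A_2)(\omega) = \sum_{\omega = \gamma\delta} A_1(\gamma) A_2(\delta),
\end{align*}
where $\gamma, \delta$ run over $V_\Z^\bullet$. Since each factor $\ulflex{a_p}{b_p}$ is a (possibly empty, only for $p = s$) word in $V_\Z^\bullet$, any decomposition $\omega = \gamma\delta$ is obtained by choosing an index $p \in \{1, \dots, s\}$ together with a splitting $\ulflex{a_p}{b_p} = \alpha\beta$ of the $p$-th block, and then setting $\gamma = \ulflex{a_1}{b_1}\cdots\ulflex{a_{p-1}}{b_{p-1}}\alpha$ and $\delta = \beta\ulflex{a_{p+1}}{b_{p+1}}\cdots\ulflex{a_s}{b_s}$. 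First I would make this bookkeeping precise: every prefix of $\omega$ is uniquely of the form (full blocks $1$ through $p-1$)(a prefix $\alpha$ of block $p$), with $p$ determined by requiring $\alpha$ to be a proper-or-full nonempty... — here care is needed because the empty prefix and the block boundaries are counted twice if one is not careful, which is exactly why the statement separates out the ``boundary'' terms ($\alpha$ or $\beta$ empty) from the ``interior'' terms ($\alpha, \beta \neq \emptyset$).

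Concretely, I would split the sum over decompositions $\omega = \gamma\delta$ into three families. The \emph{extreme} terms: $\gamma = \emptyset$ (giving $A_1(\emptyset)A_2(\omega)$) and $\delta = \emptyset$ (giving $A_1(\omega)A_2(\emptyset)$). The \emph{block-boundary} terms: $\gamma = \ulflex{a_1}{b_1}\cdots\ulflex{a_p}{b_p}$ for $p = 1, \dots, s-1$, with $\delta$ the complementary product of blocks $p+1, \dots, s$; this yields $\sum_{p=1}^{s-1} A_1(\ulflex{a_1}{b_1}\cdots\ulflex{a_p}{b_p}) A_2(\ulflex{a_{p+1}}{b_{p+1}}\cdots\ulflex{a_s}{b_s})$. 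The \emph{interior} terms: for each $p = 1, \dots, s$ a genuinely internal splitting $\ulflex{a_p}{b_p} = \alpha\beta$ with $\alpha, \beta \neq \emptyset$, contributing the last double sum. I would then check that these three families are pairwise disjoint and exhaust all decompositions. Disjointness between the interior family and the other two is clear since interior splittings have $\alpha,\beta\neq\emptyset$; the only subtlety is whether a block-boundary split could coincide with an extreme split — this happens exactly when $s = 1$ (then there are no boundary terms, consistent with the empty range $p = 1$ to $s-1 = 0$) or when some block $\ulflex{a_p}{b_p}$ is empty, which by hypothesis can only occur for $p = s$ (where $b_s \in V_\Z^\bullet$ is allowed to be empty, i.e.\ $\ulflex{a_s}{b_s} = a_s$ could in principle... actually $a_s \neq \emptyset$ so $\ulflex{a_s}{b_s}$ has length $\geq 1$ regardless). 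Since $a_1, \dots, a_s \neq \emptyset$, every block has length $\geq 1$, so no spurious coincidences occur and the three families partition the set of decompositions.

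The main obstacle — though it is more of a careful-bookkeeping point than a deep one — is the uniqueness of the assignment ``decomposition $\omega = \gamma\delta$ $\leftrightarrow$ (index $p$, internal split)''. This relies on the block structure being recoverable from $\omega$, which it is not literally (concatenation forgets where blocks begin); rather, the correct statement is that we are summing over the \emph{formal} data of the decomposition together with the given block structure, and each formal prefix $\gamma$ of the concatenated word $\omega$ arises from a unique choice of $p$ and prefix $\alpha$ of block $p$ provided we adopt the convention that $\alpha$ ranges over \emph{nonempty} prefixes and treat $\alpha = \emptyset$ (i.e.\ $\gamma$ ends exactly at a block boundary) as belonging to the boundary family. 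Once this convention is fixed the identity is a purely combinatorial rearrangement of the finite sum, and no cancellation or algebraic identity beyond associativity of concatenation is needed. I would conclude by remarking that the formula is exactly the shape required to feed into the proof of Proposition \ref{prop:alg aut of ganit B}, since the three term-types match the three ways a flexion-decomposition of $\vecv_r$ can interact with the product.
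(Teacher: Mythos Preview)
Your proposal is correct and follows essentially the same approach as the paper: both arguments expand $(A_1\times A_2)(\omega)=\sum_{\omega=\gamma\delta}A_1(\gamma)A_2(\delta)$ and classify each split according to whether it falls at the ends, at a block boundary, or strictly inside some block $\ulflex{a_p}{b_p}$. The paper organises this slightly differently---first isolating the $\delta=\emptyset$ term, then indexing the remaining splits by $(p,\alpha,\beta)$ with $\beta\neq\emptyset$, and finally separating $\alpha=\emptyset$ from $\alpha\neq\emptyset$---but the content is identical, and your worry about ``recoverability of the block structure'' is unnecessary: since each $a_p\neq\emptyset$ forces $l(\ulflex{a_p}{b_p})\geq 1$, the prefixes of $\omega$ are partitioned by length alone into the three families, with no ambiguity.
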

\begin{proof}
By definition of the product $\times$, we have
\begin{align*}
	&(A_1\times A_2)(\omega)
	=\sum_{\omega=\alpha\beta}A_1(\alpha)A_2(\beta) \\
	&=A_1(\omega)A_2(\emptyset) 
	+\sum_{p=1}^s\sum_{\substack{
		\ulflex{a_p}{b_p}=\alpha\beta \\
		\beta\neq\emptyset}}
	A_1(\ulflex{a_1}{b_1}\cdots \ulflex{a_{p-1}}{b_{p-1}}\alpha)
	A_2(\beta\ulflex{a_{p+1}}{b_{p+1}}\cdots \ulflex{a_s}{b_s}) \\
	&=A_1(\omega)A_2(\emptyset) 
	+\sum_{p=1}^s
	A_1(\ulflex{a_1}{b_1}\cdots \ulflex{a_{p-1}}{b_{p-1}})
	A_2(\ulflex{a_p}{b_p}\ulflex{a_{p+1}}{b_{p+1}}\cdots \ulflex{a_s}{b_s}) \\
	&\quad +\sum_{p=1}^s\sum_{\substack{
		\ulflex{a_p}{b_p}=\alpha\beta \\
		\alpha,\beta\neq\emptyset}}
	A_1(\ulflex{a_1}{b_1}\cdots \ulflex{a_{p-1}}{b_{p-1}}\alpha)
	A_2(\beta\ulflex{a_{p+1}}{b_{p+1}}\cdots \ulflex{a_s}{b_s}).
\end{align*}
The case of $p=1$ in the second term coincides with $A_1(\emptyset)A_2(\omega)$.
Hence, we obtain the claim.
\end{proof}
By using Lemma \ref{lem: lemma for alg aut of ganit B}, we prove Proposition \ref{prop:alg aut of ganit B}.\\

\noindent
{\it Proof of Proposition \ref{prop:alg aut of ganit B}}.
Let $B\in\overline{\GARI}(\Gamma)$ and $A_1,A_2\in\overline{\BIMU}(\Gamma)$.
When $r=0,1$, it is easy to get
$$
(\ganit_v(B)(A_1\times A_2))(\vecv_r)=(A_1\times A_2)(\vecv_r)
=(\ganit_v(B)(A_1)\times\ganit_v(B)(A_2))(\vecv_r).
$$
For $r\geq2$, by definition we have
\begin{align*}
	&(\ganit_v(B)(A_1\times A_2))(\vecv_r) \\
	&=\sum_{s\geq1}\sum_{\substack{
		\vecv_r=a_1b_1\cdots a_sb_s \\
		a_1,\dots,a_s\neq\emptyset \\
		b_1,\dots,b_{s-1}\neq\emptyset}}
	B(\lrflex{a_1}{b_1})\cdots B(\lrflex{a_s}{b_s})
	\ (A_1\times A_2)(\ulflex{a_1}{b_1}\cdots \ulflex{a_s}{b_s}).
\end{align*}
By using Lemma \ref{lem: lemma for alg aut of ganit B}, we calculate
\begin{align}\label{eqn: ganit(B)(A1 times A2)=}
	&(\ganit_v(B)(A_1\times A_2))(\vecv_r) \\
	&=(\ganit(B)(A_1))(\vecv_r)A_2(\emptyset) + A_1(\emptyset)(\ganit(B)(A_2))(\vecv_r) \nonumber\\
	&\quad +\sum_{s\geq1}\sum_{\substack{
		\vecv_r=a_1b_1\cdots a_sb_s \\
		a_1,\dots,a_s\neq\emptyset \\
		b_1,\dots,b_{s-1}\neq\emptyset}}
	B(\lrflex{a_1}{b_1})\cdots B(\lrflex{a_s}{b_s})
	\ \left\{\sum_{p=1}^{s-1}
	A_1(\ulflex{a_1}{b_1}\cdots \ulflex{a_p}{b_p})
	A_2(\ulflex{a_{p+1}}{b_{p+1}}\cdots \ulflex{a_s}{b_s}) \right. \nonumber\\
	&\quad\quad \left.+ \sum_{p=1}^s\sum_{\substack{
		\ulflex{a_p}{b_p}=\alpha\beta \\
		\alpha,\beta\neq\emptyset}}
	A_1(\ulflex{a_1}{b_1}\cdots \ulflex{a_{p-1}}{b_{p-1}}\alpha)
	A_2(\beta\ulflex{a_{p+1}}{b_{p+1}}\cdots \ulflex{a_s}{b_s})\right\}. \nonumber
\end{align}
Here, by putting $q=s-p$, we calculate the third term of the right hand side of \eqref{eqn: ganit(B)(A1 times A2)=} as
\begin{align*}
	&\sum_{s\geq1}\sum_{\substack{
		\vecv_r=a_1b_1\cdots a_sb_s \\
		a_1,\dots,a_s\neq\emptyset \\
		b_1,\dots,b_{s-1}\neq\emptyset}}
	B(\lrflex{a_1}{b_1})\cdots B(\lrflex{a_s}{b_s})
	\ \sum_{p=1}^{s-1}
	A_1(\ulflex{a_1}{b_1}\cdots \ulflex{a_p}{b_p})
	A_2(\ulflex{a_{p+1}}{b_{p+1}}\cdots \ulflex{a_s}{b_s}) \\
	&=\sum_{p,q\geq1}\sum_{\substack{
		\vecv_r=a_1b_1\cdots a_{p+q}b_{p+q} \\
		a_1,\dots,a_{p+q}\neq\emptyset \\
		b_1,\dots,b_{p-1}\neq\emptyset \\
		b_{p+1},\dots,b_{p+q-1}\neq\emptyset \\
		b_p\neq\emptyset}}
	B(\lrflex{a_1}{b_1})\cdots B(\lrflex{a_{p+q}}{b_{p+q}})
	\ A_1(\ulflex{a_1}{b_1}\cdots \ulflex{a_p}{b_p})
	A_2(\ulflex{a_{p+1}}{b_{p+1}}\cdots \ulflex{a_{p+q}}{b_{p+q}}).
\end{align*}
While, we put $q=s-p+1$ and we replace
\begin{align*}
	& a_i \longrightarrow a_{i+1}, \qquad (p+1 \leqslant i \leqslant s) \\
	& b_j \longrightarrow b_{j+1}, \qquad (p \leqslant j \leqslant s)
\end{align*}
and replace $\alpha=a_p$, $\beta=\ulflex{a_{p+1}}{b_p+1}$.
Then we calculate the fourth term of the right hand side of \eqref{eqn: ganit(B)(A1 times A2)=} as
\begin{align*}
	&\sum_{s\geq1}\sum_{\substack{
		\vecv_r=a_1b_1\cdots a_sb_s \\
		a_1,\dots,a_s\neq\emptyset \\
		b_1,\dots,b_{s-1}\neq\emptyset}}
	B(\lrflex{a_1}{b_1})\cdots B(\lrflex{a_s}{b_s})
	\ \sum_{p=1}^s\sum_{\substack{
		\ulflex{a_p}{b_p}=\alpha\beta \\
		\alpha,\beta\neq\emptyset}}
	A_1(\ulflex{a_1}{b_1}\cdots \ulflex{a_{p-1}}{b_{p-1}}\alpha)
	A_2(\beta\ulflex{a_{p+1}}{b_{p+1}}\cdots \ulflex{a_s}{b_s}) \\
	&=\sum_{p,q\geq1}\sum_{\substack{
		\vecv_r=a_1b_1\cdots a_{p+q}b_{p+q} \\
		a_1,\dots,a_{p+q}\neq\emptyset \\
		b_1,\dots,b_{p-1}\neq\emptyset \\
		b_{p+1},\dots,b_{p+q-1}\neq\emptyset \\
		b_p=\emptyset}}
	B(\lrflex{a_1}{b_1})\cdots B(\lrflex{a_{p+q}}{b_{p+q}})
	\ A_1(\ulflex{a_1}{b_1}\cdots \ulflex{a_p}{b_p})
	A_2(\ulflex{a_{p+1}}{b_{p+1}}\cdots \ulflex{a_{p+q}}{b_{p+q}}).
\end{align*}
Hence, we get
\begin{align*}
	&(\ganit_v(B)(A_1\times A_2))(\vecv_r) \\
	&=(\ganit(B)(A_1))(\vecv_r)A_2(\emptyset) + A_1(\emptyset)(\ganit(B)(A_2))(\vecv_r) \\
	&\quad +\sum_{p,q\geq1}
	\sum_{\substack{
		\vecv_r=a_1b_1\cdots a_{p+q}b_{p+q} \\
		a_1,\dots,a_{p+q}\neq\emptyset \\
		b_1,\dots,b_{p-1}\neq\emptyset \\
		b_{p+1},\dots,b_{p+q-1}\neq\emptyset}}
	B(\lrflex{a_1}{b_1})\cdots B(\lrflex{a_{p+q}}{b_{p+q}}) \\
	&\hspace{5.3cm} \cdot A_1(\ulflex{a_1}{b_1}\cdots \ulflex{a_p}{b_p})
	\ A_2(\ulflex{a_{p+1}}{b_{p+1}}\cdots \ulflex{a_{p+q}}{b_{p+q}}).
\intertext{By factoring the summation of the third term, we calculate}
	&=(\ganit(B)(A_1))(\vecv_r)A_2(\emptyset) + A_1(\emptyset)(\ganit(B)(A_2))(\vecv_r) \\
	&\quad +\sum_{\substack{
		\vecv_r=\alpha\beta \\
		\alpha,\beta\neq\emptyset}}
	\left\{
	\sum_{p\geq1}\sum_{\substack{
		\alpha=a_1b_1\cdots a_pb_p \\
		a_1,\dots,a_p\neq\emptyset \\
		b_1,\dots,b_{p-1}\neq\emptyset}}
	B(\lrflex{a_1}{b_1})\cdots B(\lrflex{a_p}{b_p})
	\ A_1(\ulflex{a_1}{b_1}\cdots \ulflex{a_p}{b_p})
	\right\} \\
	&\quad \cdot \left\{
	\sum_{q\geq1}\sum_{\substack{
		\beta=a_{p+1}b_{p+1}\cdots a_{p+q}b_{p+q} \\
		a_{p+1},\dots,a_{p+q}\neq\emptyset \\
		b_{p+1},\dots,b_{p+q-1}\neq\emptyset}}
	B(\lrflex{a_{p+1}}{b_{p+1}})\cdots B(\lrflex{a_{p+q}}{b_{p+q}})
	\ A_2(\ulflex{a_{p+1}}{b_{p+1}}\cdots \ulflex{a_{p+q}}{b_{p+q}})
	\right\} \\
	&=\sum_{\vecv_r=\alpha\beta}(\ganit_v(B)(A_1))(\alpha)(\ganit_v(B)(A_2))(\beta) \\
	&=(\ganit_v(B)(A_1)\times\ganit_v(B)(A_2))(\vecv_r),
\end{align*}
Hence, we finish the proof.
\hfill $\Box$

\begin{thm}\label{thm: ganit(B) is automorphism}
For $B\in\overline{\GARI}(\Gamma)$, the $\Q$-linear map $\ganit_v(B)$ induces a group automorphism on $(\overline{\GARI}(\Gamma),\times)$ and induces a Lie algebra automorphism on $(\overline{\ARI}(\Gamma),[,])$.
\end{thm}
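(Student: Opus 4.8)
The plan is to derive both statements from Proposition \ref{prop:alg aut of ganit B}, which already gives that $\ganit_v(B)$ is an algebra endomorphism of $(\overline{\BIMU}(\Gamma),\times)$, together with Remark \ref{rem:ganit induce map on ARI and GARI}, which supplies a two-sided inverse $\ganit_v(C)$ for a suitable $C\in\overline{\GARI}(\Gamma)$ (and notes that $\ganit_v(B)$ preserves the values at $\emptyset$, hence restricts to $\overline{\ARI}(\Gamma)$ and $\overline{\GARI}(\Gamma)$). So $\ganit_v(B)$ is a bijective algebra homomorphism of $(\overline{\BIMU}(\Gamma),\times)$, i.e. an algebra automorphism with inverse $\ganit_v(C)$, which is itself an algebra homomorphism by the same Proposition.

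First I would record that for $S_1,S_2\in\overline{\GARI}(\Gamma)$ we have $(S_1\times S_2)(\emptyset)=S_1(\emptyset)S_2(\emptyset)=1$, so $\overline{\GARI}(\Gamma)$ is closed under $\times$; being a group follows since $I\in\overline{\GARI}(\Gamma)$ is the unit and inverses exist in the group $(\overline{\GARI}(\Gamma),\times)$ already asserted in \S\ref{subsec:Definition of mould}. Then, for $S\in\overline{\GARI}(\Gamma)$, the identity $(\ganit_v(B)(S))(\emptyset)=S(\emptyset)=1$ shows $\ganit_v(B)$ maps $\overline{\GARI}(\Gamma)$ into itself, the same holds for its inverse $\ganit_v(C)$, and multiplicativity on $\overline{\BIMU}(\Gamma)$ restricts to multiplicativity on $\overline{\GARI}(\Gamma)$. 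Hence $\ganit_v(B)\colon(\overline{\GARI}(\Gamma),\times)\to(\overline{\GARI}(\Gamma),\times)$ is a bijective group homomorphism, i.e. a group automorphism.

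Next I would treat the Lie algebra $(\overline{\ARI}(\Gamma),[,])$. Here $\ganit_v(B)$ is a $\Q$-linear map sending $\overline{\ARI}(\Gamma)$ into itself, again because $(\ganit_v(B)(A))(\emptyset)=A(\emptyset)=0$, with $\Q$-linear inverse $\ganit_v(C)$ preserving $\overline{\ARI}(\Gamma)$ as well. Compatibility with the bracket is immediate from the definition \eqref{eqn:bracket product} of $[,]$ and the multiplicativity of $\ganit_v(B)$ for $\times$: for $A_1,A_2\in\overline{\ARI}(\Gamma)$,
\begin{align*}
\ganit_v(B)([A_1,A_2])
&=\ganit_v(B)(A_1\times A_2)-\ganit_v(B)(A_2\times A_1)\\
&=\ganit_v(B)(A_1)\times\ganit_v(B)(A_2)-\ganit_v(B)(A_2)\times\ganit_v(B)(A_1)\\
&=[\ganit_v(B)(A_1),\ganit_v(B)(A_2)].
\end{align*}
Thus $\ganit_v(B)$ is a bijective Lie algebra homomorphism on $(\overline{\ARI}(\Gamma),[,])$, i.e. a Lie algebra automorphism.

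The only real content beyond bookkeeping is the invertibility of $\ganit_v(B)$, and this is exactly what Remark \ref{rem:ganit induce map on ARI and GARI} provides: writing $\ganit_v(B)\circ\ganit_v(C)=\id$ componentwise and solving recursively shows each $C(\vecv_r)$ is determined by $B(\vecv_1),\dots,B(\vecv_r)$, so a two-sided inverse of the form $\ganit_v(C)$ exists for every $B\in\overline{\GARI}(\Gamma)$. I therefore do not expect a genuine obstacle; the main thing to be careful about is confirming that the inverse $\ganit_v(C)$ is again given by the same construction (so that Proposition \ref{prop:alg aut of ganit B} applies to it) and that the $\emptyset$-component bookkeeping correctly places the inverse inside $\overline{\GARI}(\Gamma)$ resp. $\overline{\ARI}(\Gamma)$, both of which are transparent from the formula \eqref{eqn:def of ganit}.
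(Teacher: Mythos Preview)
Your proof is correct and follows essentially the same approach as the paper: combine Proposition \ref{prop:alg aut of ganit B} (multiplicativity on $(\overline{\BIMU}(\Gamma),\times)$) with Remark \ref{rem:ganit induce map on ARI and GARI} (preservation of the $\emptyset$-component and existence of an inverse of the form $\ganit_v(C)$) to conclude that $\ganit_v(B)$ restricts to a bijective group homomorphism on $\overline{\GARI}(\Gamma)$ and, via the commutator formula, to a bijective Lie algebra homomorphism on $\overline{\ARI}(\Gamma)$. The paper's proof is simply a more compressed version of exactly this argument.
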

\begin{proof}
By combining Remark \ref{rem:ganit induce map on ARI and GARI} and Proposition \ref{prop:alg aut of ganit B}, we see that the map $\ganit_v(B)$ induces a group homomorphism on $(\overline{\GARI}(\Gamma),\times)$ and induces a Lie algebra homomorphism on $(\overline{\ARI}(\Gamma),[,])$.
Because the map $\ganit_v(B)$ has the inverse map by Remark \ref{rem:ganit induce map on ARI and GARI}, we get the claim.
\end{proof}

\begin{cor}\label{cor:commutative diagram}
For $B\in\overline{\GARI}(\Gamma)$, the following diagram commutes:
$$
\xymatrix{
\overline{\GARI}(\Gamma) \ar[rr]^{\ganit_v(B)} & & \overline{\GARI}(\Gamma)  \\
\overline{\ARI}(\Gamma) \ar[rr]_{\ganit_v(B)} \ar@{->}[u]^{\exp_\times}& & \overline{\ARI}(\Gamma) \ar@{->}[u]_{\exp_\times}
}
$$
\end{cor}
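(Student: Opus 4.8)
The goal is to prove that $\exp_\times$ intertwines $\ganit_v(B)$ on $\overline{\ARI}(\Gamma)$ with $\ganit_v(B)$ on $\overline{\GARI}(\Gamma)$; that is, for every $A \in \overline{\ARI}(\Gamma)$ we have $\ganit_v(B)(\exp_\times(A)) = \exp_\times(\ganit_v(B)(A))$. The plan is to reduce everything to the two facts already established: Proposition \ref{prop:alg aut of ganit B}, which says $\ganit_v(B)$ is an algebra automorphism of $(\overline{\BIMU}(\Gamma),\times)$, and the definition \eqref{eqn:exp} of $\exp_\times$ as a formal power series in the product $\times$.

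First I would observe that, by Proposition \ref{prop:alg aut of ganit B} and $\Q$-linearity of $\ganit_v(B)$, for any $A \in \overline{\BIMU}(\Gamma)$ and any $k \geq 0$ we have $\ganit_v(B)(A^{\times k}) = (\ganit_v(B)(A))^{\times k}$ (the case $k=0$ being $\ganit_v(B)(I) = I$, which follows since $\ganit_v(B)$ is a unital algebra homomorphism, or directly from the definition). Next I would note that for $A \in \overline{\ARI}(\Gamma)$, so $A(\emptyset) = 0$, the product $A^{\times k}$ is supported in lengths $\geq k$, hence the infinite sum $\sum_{k \geq 0} \frac{1}{k!} A^{\times k}$ defining $\exp_\times(A)$ converges componentwise (only finitely many terms contribute to each $\vecv_r$); the same holds for $\ganit_v(B)(A) \in \overline{\ARI}(\Gamma)$. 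Then, since $\ganit_v(B)$ is $\Q$-linear and acts componentwise-finitely, it commutes with this convergent sum:
\begin{align*}
\ganit_v(B)(\exp_\times(A))
&= \ganit_v(B)\Bigl(\sum_{k \geq 0} \tfrac{1}{k!} A^{\times k}\Bigr)
= \sum_{k \geq 0} \tfrac{1}{k!} \ganit_v(B)(A^{\times k}) \\
&= \sum_{k \geq 0} \tfrac{1}{k!} (\ganit_v(B)(A))^{\times k}
= \exp_\times(\ganit_v(B)(A)).
\end{align*}

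The only point requiring a little care is the interchange of $\ganit_v(B)$ with the infinite sum: I would spell out that for a fixed $\vecv_r$, the value $(\ganit_v(B)(M))(\vecv_r)$ depends only on the components $M(\vecv_0), \dots, M(\vecv_r)$ (visible from \eqref{eqn:def of ganit}, where the arguments $\ulflex{a_1}{b_1}\cdots\ulflex{a_s}{b_s}$ have length at most $r$), and that $(A^{\times k})(\vecv_m) = 0$ for $m < k$; hence on each component only $k = 0, \dots, r$ contribute on both sides, reducing the identity to the finite-sum case where $\Q$-linearity of $\ganit_v(B)$ applies directly. I do not expect a genuine obstacle here — this corollary is a formal consequence of Proposition \ref{prop:alg aut of ganit B} — so the "hard part" is merely bookkeeping the convergence/finiteness so that applying $\ganit_v(B)$ term by term is justified.
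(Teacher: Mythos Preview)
Your proof is correct and follows essentially the same approach as the paper: both use the definition of $\exp_\times$ as a series and the fact that $\ganit_v(B)$ is an algebra automorphism (the paper cites Theorem \ref{thm: ganit(B) is automorphism}, you cite the underlying Proposition \ref{prop:alg aut of ganit B}) to push $\ganit_v(B)$ through the sum term by term. Your additional justification for interchanging $\ganit_v(B)$ with the infinite sum via componentwise finiteness is a welcome detail that the paper omits.
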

\begin{proof}
Let $B\in\overline{\GARI}(\Gamma)$ and $A\in \overline{\ARI}(\Gamma)$.
By using the definition \eqref{eqn:exp} of the map $\exp_\times$ and Theorem \ref{thm: ganit(B) is automorphism}, we have
\begin{align*}
\ganit_v(B)\left( \exp_\times(A) \right)
&=\ganit_v(B)\left( \sum_{k\geq0} \frac{1}{k!}A^{\times k} \right)
	= \sum_{k\geq0} \frac{1}{k!}\ganit_v(B)\left(A^{\times k} \right) \\
&= \sum_{k\geq0} \frac{1}{k!}\ganit_v(B)\left(A \right)^{\times k}
	=\exp_\times\left( \ganit_v(B)(A) \right).
\end{align*}
Hence, we obtain the above diagram.
\end{proof}

\subsection{Reformulation of $\ganit_v(B)$}\label{subsec:Algebraic preparation}
In this subsection, we introduce elements $g_B(\vecv_r) \in \mathcal K\langle V_\Z \rangle$ (Definition \ref{def:definition of map g}) as an analogue of $\ganit_v(B)$.
We prove Lemma \ref{lem:equivalent equation of g}, which is essential to prove several theorems in \S \ref{subsec:map between alternal and alternil}.

\begin{definition}\label{def:definition of map g}
For $B\in\overline{\GARI}(\Gamma)$ and $r\geq0$, we define $g_B(\vecv_r)\in\mathcal K \langle V_\Z\rangle$ by $g_B(\vecv_0):=\emptyset$ and for $r\geq1$
\begin{align}\label{eqn:definition of map g}
	g_B(\vecv_r):=\sum_{s\geq1}\sum_{\substack{
		\vecv_r=a_1b_1\cdots a_sb_s \\
		a_1,\dots,a_s\neq\emptyset \\
		b_1,\dots,b_{s-1}\neq\emptyset}}
	B(\lrflex{a_1}{b_1})\cdots B(\lrflex{a_s}{b_s})
	\ \bigl(\ulflex{a_1}{b_1}\cdots \ulflex{a_s}{b_s}\bigr).
\end{align}
\end{definition}
\begin{remark}
The above definition \eqref{eqn:definition of map g} is an analogue of \eqref{eqn:def of ganit}.
In fact, for $r=1,2,3$, we have
\begin{align*}
	g_B(\vecv_1)&=(\omega_1), \\
	g_B(\vecv_2)
	&=(\omega_1,\omega_2)+B(\lrflex{\omega_1}{\omega_2})(\ulflex{\omega_1}{\omega_2}), \\
	g_B(\vecv_3) 
	&=(\omega_1,\omega_2,\omega_3)
		+B(\lrflex{\omega_2}{\omega_3})(\ulflex{\omega_1,\omega_2}{\omega_3}) \\
	&\quad +B(\lrflex{\omega_1}{\omega_2})(\ulflex{\omega_1}{\omega_2},\omega_3)
		+B(\lrflex{\omega_1}{(\omega_2,\omega_3)})(\ulflex{\omega_1}{(\omega_2,\omega_3)}).
\end{align*}
This $g_B$ is useful to prove Theorem \ref{thm: ganit(B)(ARIal) subset ARIil}.
\end{remark}
\begin{remark}
Assume that $u_1,\dots,u_r\in \mathcal F$ are algebraically independent over $\Q$, and put $\alpha_i:=\varia{\sigma_i}{u_i}$ for $1\leq i\leq r$.
We denote $g_B(\alpha_1,\dots,\alpha_r)$ to be the image of $g_B(\vecv_r)$ under the field embedding $\Q(v_1,\dots,v_r) \hookrightarrow \mathcal F$ sending $v_i\mapsto u_i$.
For $c_1,\dots,c_m\in\mathcal K $ and for $\beta_1,\dots,\beta_m\in V_\Z^\bullet$, we also denote
$$
g_B( c_1\beta_1 + \cdots + c_m\beta_m ):=c_1g_B(\beta_1) + \cdots + c_mg_B(\beta_m).
$$
\end{remark}

We introduce several notations which are used to prove claims of this subsection.
\begin{notation}\label{not:index symbols}
In Notation \ref{not:on expression of elements in V_Z}, we denote the element $u_1\cdots u_m\in V_\Z^\bullet$ by $(u_1,\dots,u_m)$ for $u_1,\dots,u_m\in V_\Z$.
To avoid confusion, we sometimes denote the element $(c_1,\dots,c_t)\in(V_{\Z}^\bullet)^t$ by
$(c_1;\cdots;c_t)$ for $c_1,\dots,c_t\in V_\Z^\bullet$.
By using notation, we denote
$$
W_B(\vecu):=W_B(a_1;b_1;\cdots;a_s;b_s)
:=B(\lrflex{a_1}{b_1})\cdots B(\lrflex{a_s}{b_s})
	\ \bigl(\ulflex{a_1}{b_1}\cdots \ulflex{a_s}{b_s}\bigr),
$$
for $s\geq1$ and $\vecu=(a_1;b_1;\cdots;a_s;b_s)\in (V_\Z^\bullet)^{2s}$ and $B\in\overline{\GARI}(\Gamma)$.
Because we have $B(\lrflex{u}{\emptyset})=1$ and $\ulflex{(u\omega)}{\eta}=(u)(\ulflex{\omega}{\eta})$ for $u\in V_\Z$ and $\omega\in V_\Z^\bullet\setminus\{\emptyset\}$, it is easy to show
\begin{equation}\label{eqn:W(uomega;vecu)=W(u;empty;omega;vecu)}
W_B(u\omega;\vecu)=W_B(u;\emptyset;\omega;\vecu).
\end{equation}
\end{notation}

\begin{definition}
For any word $\omega\in V_{\Z}^\bullet$ and for $t\in\N$, we define two sets $D_t(\omega)$ and $E_t(\omega)$ consisting of decompositions of $\omega$ by
\begin{align*}
&D_t(\omega)
:=\left\{ (c_1;\cdots;c_t)\in (V_{\Z}^\bullet)^t\ |\ \omega=c_1\cdots c_t,\ c_1,\dots,c_{t-1}\neq\emptyset \right\}, \\
&E_t(\omega)
:=\left\{ (c_1;\cdots;c_t)\in (V_{\Z}^\bullet)^t\ |\ \omega=c_1\cdots c_t,\ c_2,\dots,c_{t-1}\neq\emptyset \right\}.
\end{align*}
For $t\geq2$, we define two subsets $D_t^{\geq2}(\omega)$ and $D_t^1(\omega)$ of $D_t(\omega)$ by
\begin{align*}
D_t^{\geq2}(\omega)
&:=\left\{ (c_1;\cdots;c_t)\in D_t(\omega)\ |\ l(c_1)\geq2 \right\}, \\
D_t^1(\omega)
&:=\left\{ (c_1;\cdots;c_t)\in D_t(\omega)\ |\ l(c_1)=1 \right\}.
\end{align*}
Note that $E_1(\omega)=D_1(\omega)$.
\end{definition}

\begin{remark}
By the above notation and definition, we have
\begin{align}\label{eqn:definition of map g, by using D}
g_B(\vecv_r)
=\sum_{s\geq1}\sum_{\vecu \in D_{2s}(\vecv_r)}
	W_B(\vecu).
\end{align}
\end{remark}

It is clear to show the following lemma.
\begin{lem}
For $t\geq2$, we have 
\begin{equation}\label{eqn:partition of D}
D_t(\omega)=D_t^{\geq2}(\omega)\,\sqcup\, D_t^1(\omega),
\end{equation}
that is, a family of sets $\{D_t^{\geq2}(\omega), D_t^1(\omega)\}$ is a partition of $D_t(\omega)$, and we have 
\begin{equation}\label{eqn:partition of E}
E_t(\omega)=(\{\emptyset\}\times D_{t-1}(\omega))\sqcup D_t(\omega),
\end{equation}
that is, a family of sets $\{\{\emptyset\}\times D_{t-1}(\omega), D_t(\omega)\}$ is a partition of $E_t(\omega)$.
\end{lem}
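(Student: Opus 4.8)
The plan is to prove both identities purely set-theoretically, by classifying the decompositions of $\omega$ according to their first component $c_1$. For \eqref{eqn:partition of D}, I would first observe that any tuple $(c_1;\cdots;c_t)\in D_t(\omega)$ satisfies $c_1\neq\emptyset$ by definition, since for $t\geq2$ the word $c_1$ is among $c_1,\dots,c_{t-1}$; hence $l(c_1)\geq1$. The conditions $l(c_1)=1$ and $l(c_1)\geq2$ are then mutually exclusive and, together, exhaust this situation, so $D_t^1(\omega)$ and $D_t^{\geq2}(\omega)$ are disjoint and their union is all of $D_t(\omega)$. As both are \emph{defined} as subsets of $D_t(\omega)$, this is exactly the asserted partition.

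For \eqref{eqn:partition of E}, I would split a tuple $(c_1;\cdots;c_t)\in E_t(\omega)$ according to whether $c_1=\emptyset$ or $c_1\neq\emptyset$; these two cases are obviously disjoint and exhaustive. If $c_1=\emptyset$, then $\omega=c_2\cdots c_t$ with $c_2,\dots,c_{t-1}\neq\emptyset$, so $(c_2;\cdots;c_t)\in D_{t-1}(\omega)$, i.e. the tuple lies in $\{\emptyset\}\times D_{t-1}(\omega)$; conversely every element of $\{\emptyset\}\times D_{t-1}(\omega)$ lies in $E_t(\omega)$ because the defining condition of $E_t$ constrains only $c_2,\dots,c_{t-1}$. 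If $c_1\neq\emptyset$, then combining this with $c_2,\dots,c_{t-1}\neq\emptyset$ gives $c_1,\dots,c_{t-1}\neq\emptyset$, i.e. $(c_1;\cdots;c_t)\in D_t(\omega)$, and the reverse inclusion $D_t(\omega)\subset E_t(\omega)$ is immediate from the definitions. Combining the two cases yields \eqref{eqn:partition of E}.

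The argument presents no real obstacle; the only point demanding a little care is bookkeeping of the index ranges — checking, for instance, that when $t=2$ the ``interior'' conditions $c_2,\dots,c_{t-1}\neq\emptyset$ are vacuous and that $E_1(\omega)=D_1(\omega)$ (as already noted in the text), so that the displayed decomposition of $E_2(\omega)$ reads $\{(\emptyset;\omega)\}\sqcup D_2(\omega)$, which is correct. I would therefore keep the proof to a couple of lines, treating $t\geq2$ uniformly and not singling out small cases.
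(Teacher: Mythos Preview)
Your argument is correct and is precisely the unpacking of the definitions that the paper has in mind; the paper's own proof is the single line ``These partitions follow from the definitions.'' Your write-up is simply a more explicit rendering of the same trivial verification.
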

\begin{proof}
These partitions follow from the definitions.
\end{proof}

For $r\geq1$ and $\omega=(\alpha_1,\dots,\alpha_r)$ with $\alpha_1,\dots,\alpha_r\in V_{\Z}$, we sometimes denote
$$
\omega':=\left\{\begin{array}{ll}
	(\alpha_2,\dots,\alpha_r) & (r\geq2), \\
	\emptyset & (r=1).
\end{array}\right.
$$
By using these symbols, we have
\begin{align*}
D_t^{\geq2}(\omega)
&=\left\{ (\alpha_1c_1;c_2;\cdots;c_t)\ |\ \omega'=c_1\cdots c_t,\ c_1,\dots,c_{t-1}\neq\emptyset \right\}, \\
D_t^1(\omega)
&=\left\{ (\alpha_1;c_2;\cdots;c_t)\ |\ \omega'=c_2\cdots c_t,\ c_2,\dots,c_{t-1}\neq\emptyset \right\},
\end{align*}
\begin{lem}
Let $\omega=(\alpha_1,\dots,\alpha_r)$ with $\alpha_1,\dots,\alpha_r\in V_{\Z}$.
Then there exist two bijections
\begin{align}
\label{eqn:bijection from Dgeq2 to D}
D_t^{\geq2}(\omega) \longrightarrow D_t(\omega')
&\ ;\ (\alpha_1c_1;c_2;\cdots;c_t) \longmapsto (c_1;c_2;\cdots;c_t), \\
\label{eqn:bijection from D1 to D}
D_t^1(\omega) \longrightarrow D_{t-1}(\omega')
&\ ;\ (\alpha_1;c_2;\cdots;c_t) \longmapsto (c_2;\cdots;c_t),
\end{align}
for $t\geq2$, and there exists a bijection
\begin{align}\label{eqn:bijection from E to D}
E_t(\omega') &\longrightarrow D_t(\omega_1,\omega')=D_t(\omega) \\
(b_0;\vecu) &\longmapsto \ (\alpha_1b_0;\vecu) \nonumber
\end{align}
for $t\geq2$.
\end{lem}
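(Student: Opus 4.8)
I would prove the three bijections by writing down explicit inverse maps and checking that the membership conditions transform correctly under both directions. The key observation is that all three statements are purely combinatorial facts about decompositions of a word $\omega=(\alpha_1,\dots,\alpha_r)$ with its first letter $\alpha_1$ stripped off, so the whole proof reduces to carefully tracking which tuple components are allowed to be empty.

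First I would treat \eqref{eqn:bijection from Dgeq2 to D}. An element of $D_t^{\geq2}(\omega)$ has, by the displayed reformulation preceding the lemma, the form $(\alpha_1 c_1; c_2; \cdots; c_t)$ with $\omega' = c_1\cdots c_t$ and $c_1,\dots,c_{t-1}\neq\emptyset$; the map sends it to $(c_1;c_2;\cdots;c_t)$. That these are exactly the defining conditions of $D_t(\omega')$ is immediate, and the inverse prepends $\alpha_1$ to the first slot, which automatically lands in $D_t^{\geq2}$ because $l(\alpha_1 c_1)\geq 2$. So this is a bookkeeping check with no real content. The map \eqref{eqn:bijection from D1 to D} is handled the same way: an element of $D_t^1(\omega)$ is $(\alpha_1;c_2;\cdots;c_t)$ with $\omega'=c_2\cdots c_t$ and $c_2,\dots,c_{t-1}\neq\emptyset$; deleting the first slot $\alpha_1$ gives $(c_2;\cdots;c_t)$, a $(t-1)$-tuple whose product is $\omega'$ with all but the last entry nonempty, i.e. an element of $D_{t-1}(\omega')$, and the inverse inserts $\alpha_1$ in front.

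For \eqref{eqn:bijection from E to D} the content is that $E_t(\omega')$, by definition, consists of tuples $(b_0;c_1;\cdots;c_{t-1})$ with $\omega'=b_0 c_1\cdots c_{t-1}$ and $c_1,\dots,c_{t-2}\neq\emptyset$ (so $b_0$ and the last entry may be empty), while $D_t(\omega_1,\omega') = D_t(\omega)$ consists of $(d_1;\cdots;d_t)$ with $\omega = d_1\cdots d_t$ and $d_1,\dots,d_{t-1}\neq\emptyset$. Prepending $\alpha_1$ to $b_0$ produces $d_1 := \alpha_1 b_0$, which is nonempty regardless of whether $b_0$ is, so the ``$b_0$ may be empty'' freedom in $E_t$ matches precisely the ``$d_1\neq\emptyset$'' constraint in $D_t$; the remaining entries are untouched, and the map is clearly invertible by stripping $\alpha_1$ from the first slot. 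I would also note that the partition \eqref{eqn:partition of D} and the bijections \eqref{eqn:bijection from Dgeq2 to D}, \eqref{eqn:bijection from D1 to D} together reprove $|D_t(\omega)| = |D_t(\omega')| + |D_{t-1}(\omega')|$, which is the combinatorial shadow of the recurrence used later.

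**The main obstacle.** There is no serious obstacle; the only thing requiring care is the boundary bookkeeping of empty slots — in particular remembering that in $D_t$ the \emph{last} entry $c_t$ may be empty while $c_1,\dots,c_{t-1}$ may not, and that in $E_t$ it is the \emph{first} entry that is additionally allowed to be empty. One must verify in each direction that prepending or deleting $\alpha_1$ neither creates a forbidden empty slot nor destroys a required nonempty one, and that the value $t$ (resp. $t-1$) of the index is preserved. Once these are checked, well-definedness and mutual inverseness are formal.
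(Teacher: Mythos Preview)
Your proposal is correct and matches the paper's approach exactly: the paper's proof is the single line ``These three bijections follow from the definitions,'' and what you have written is simply an explicit unpacking of that claim, carefully tracking the empty-slot conditions in $D_t$, $D_t^{\geq2}$, $D_t^1$, and $E_t$. There is nothing to add or correct.
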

\begin{proof}
These three bijections follow from the definitions.
\end{proof}

\begin{lem}\label{lem:equivalent equation of g}
For $r\geq 1$, we have
\begin{align*}
g_B(\vecv_r)
=\sum_{s\geq0}\sum_{(b_0;\vecu)\in E_{2s+1}(\vecv_r')}
	B(\lrflex{\omega_1}{b_0})\ (\ulflex{\omega_1}{b_0})
	W_B(\vecu).
\end{align*}
\end{lem}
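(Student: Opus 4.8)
The plan is to start from the reformulation \eqref{eqn:definition of map g, by using D}, namely $g_B(\vecv_r)=\sum_{s\geq1}\sum_{\vecu\in D_{2s}(\vecv_r)}W_B(\vecu)$, and to split each index set according to the length of its first block via the partition \eqref{eqn:partition of D}, $D_{2s}(\vecv_r)=D_{2s}^{\geq2}(\vecv_r)\sqcup D_{2s}^1(\vecv_r)$, handling the two pieces separately. For a decomposition $(\omega_1;c_2;\cdots;c_{2s})\in D_{2s}^1(\vecv_r)$ (so that $c_2\cdots c_{2s}=\vecv_r'$), the definition of $W_B$ in Notation \ref{not:index symbols} gives at once
\[
	W_B(\omega_1;c_2;\cdots;c_{2s})=B(\lrflex{\omega_1}{c_2})\,(\ulflex{\omega_1}{c_2})\,W_B(c_3;\cdots;c_{2s}),
\]
which is precisely a summand on the right-hand side of the Lemma, with $b_0=c_2$ and $\vecu=(c_3;\cdots;c_{2s})$. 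For a decomposition $(\omega_1 e;c_2;\cdots;c_{2s})\in D_{2s}^{\geq2}(\vecv_r)$ with $e\neq\emptyset$ (so that $ec_2\cdots c_{2s}=\vecv_r'$), I would first use \eqref{eqn:W(uomega;vecu)=W(u;empty;omega;vecu)} to write $W_B(\omega_1 e;c_2;\cdots;c_{2s})=W_B(\omega_1;\emptyset;e;c_2;\cdots;c_{2s})$, and then, since $B(\lrflex{\omega_1}{\emptyset})=1$, the definition of $W_B$ gives
\[
	W_B(\omega_1 e;c_2;\cdots;c_{2s})=B(\lrflex{\omega_1}{\emptyset})\,(\ulflex{\omega_1}{\emptyset})\,W_B(e;c_2;\cdots;c_{2s}),
\]
again a right-hand summand, this time with $b_0=\emptyset$ and $\vecu=(e;c_2;\cdots;c_{2s})$.

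Next I would translate these two computations into an identification of summation ranges. By \eqref{eqn:bijection from D1 to D} there is a bijection $D_{2s}^1(\vecv_r)\to D_{2s-1}(\vecv_r')$, and since $D_t\subseteq E_t$ and $E_1(\vecv_r')=D_1(\vecv_r')$, each image lies in $E_{2s-1}(\vecv_r')=E_{2(s-1)+1}(\vecv_r')$; under this bijection the value $W_B$ of an element of $D_{2s}^1(\vecv_r)$ equals the summand $B(\lrflex{\omega_1}{b_0})(\ulflex{\omega_1}{b_0})W_B(\vecu)$ attached to its image $(b_0;\vecu)$. By \eqref{eqn:bijection from Dgeq2 to D} followed by prepending the empty word there is a bijection $D_{2s}^{\geq2}(\vecv_r)\to\{\emptyset\}\times D_{2s}(\vecv_r')$, and here too the value of $W_B$ is carried to the corresponding summand (with $b_0=\emptyset$). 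Combining these with the partition \eqref{eqn:partition of E} (valid for $t\geq2$) and the equality $E_1(\vecv_r')=D_1(\vecv_r')$, which together give
\[
	\bigsqcup_{s\geq0}E_{2s+1}(\vecv_r')=\Bigl(\bigsqcup_{s\geq1}\{\emptyset\}\times D_{2s}(\vecv_r')\Bigr)\sqcup\Bigl(\bigsqcup_{s\geq0}D_{2s+1}(\vecv_r')\Bigr),
\]
and using the reindexing $2s-1=2(s-1)+1$, I obtain that $\bigsqcup_{s\geq1}D_{2s}(\vecv_r)$ is carried bijectively onto $\bigsqcup_{s\geq0}E_{2s+1}(\vecv_r')$ with matching summands. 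Summing the two identities over $s\geq1$ then yields the asserted formula.

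The step I expect to require the most care is this last recombination: one has to check that the family coming from the long first blocks (which, after peeling off $\omega_1$, fills exactly the $\{\emptyset\}\times D_{2s}(\vecv_r')$ piece of $E_{2s+1}(\vecv_r')$) and the family coming from the length-one first blocks (which fills exactly the $D_{2s+1}(\vecv_r')$ piece, after the shift of index by one) dovetail into $\bigsqcup_{s\geq0}E_{2s+1}(\vecv_r')$ with neither overlap nor omission, and in particular that the degenerate contributions match: $E_1(\vecv_r')$ must be accounted for (it is the image of $D_2^1(\vecv_r)$), and when $r=1$ one has $\vecv_r'=\emptyset$, so that only $D_2^1(\vecv_1)=\{(\omega_1;\emptyset)\}$ and $E_1(\emptyset)=\{(\emptyset)\}$ survive and must correspond to each other. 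By contrast the flexion manipulations themselves — the two factorizations of $W_B$ above, the use of \eqref{eqn:W(uomega;vecu)=W(u;empty;omega;vecu)} and of $B(\lrflex{u}{\emptyset})=1$ — are routine once the correspondence of indices is in place.
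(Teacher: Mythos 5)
Your proposal is correct and follows essentially the same route as the paper: both arguments classify the decompositions $\vecv_r=c_1\cdots c_{2s}$ according to whether the first block is exactly $(\omega_1)$ or longer, use the identity \eqref{eqn:W(uomega;vecu)=W(u;empty;omega;vecu)} together with $B(\lrflex{\omega_1}{\emptyset})=1$ to handle the longer blocks, and reassemble the two families via the partition \eqref{eqn:partition of E} and $E_1=D_1$ into $\bigsqcup_{s\geq0}E_{2s+1}(\vecv_r')$. The only cosmetic difference is that the paper first strips $\omega_1$ with the bijection \eqref{eqn:bijection from E to D} and then splits $E_{2s}(\vecv_r')$, whereas you split $D_{2s}(\vecv_r)=D_{2s}^{\geq2}\sqcup D_{2s}^{1}$ directly and use \eqref{eqn:bijection from Dgeq2 to D} and \eqref{eqn:bijection from D1 to D}; the bookkeeping, including the degenerate $r=1$ case, matches.
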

\begin{proof}
By using the bijection \eqref{eqn:bijection from E to D} for $t=2s$ and $\omega=\vecv_r$, we have
\begin{align*}
g_B(\vecv_r)
&=\sum_{s\geq1}\sum_{(a_1;b_1;\vecu) \in D_{2s}(\vecv_r)}
	W_B(a_1;b_1;\vecu)
=\sum_{s\geq1}\sum_{(a_1;b_1;\vecu) \in E_{2s}(\vecv_r')}
	W_B(\omega_1a_1;b_1;\vecu).
\intertext{By using the partition \eqref{eqn:partition of E} for $t=2s$ and $\omega=\vecv_r'$, we get}
&=\sum_{s\geq1}\sum_{(b_1;\vecu) \in D_{2s-1}(\vecv_r')}
	W_B(\omega_1;b_1;\vecu)
+\sum_{s\geq1}\sum_{(a_1;b_1;\vecu) \in D_{2s}(\vecv_r')}
	W_B(\omega_1a_1;b_1;\vecu).
\intertext{By changing variables of the first summation and by applying \eqref{eqn:W(uomega;vecu)=W(u;empty;omega;vecu)} to the second summation, we calculate}
&=\sum_{s\geq0}\sum_{(b_0;\vecu) \in D_{2s+1}(\vecv_r')}
	W_B(\omega_1;b_0;\vecu)
+\sum_{s\geq1}\sum_{(a_1;b_1;\vecu) \in D_{2s}(\vecv_r')}
	W_B(\omega_1;\emptyset;a_1;b_1;\vecu) \\
&=W_B(\omega_1;\vecv_r')
+\sum_{s\geq1}\sum_{(b_0;\vecu) \in D_{2s+1}(\vecv_r')}
	W_B(\omega_1;b_0;\vecu)
+\sum_{s\geq1}\sum_{(b_0;\vecu) \in \{\emptyset\}\times D_{2s}(\vecv_r')}
	W_B(\omega_1;b_0;\vecu).
\intertext{By using the partition \eqref{eqn:partition of E} again, we have}
&=W_B(\omega_1;\vecv_r')
+\sum_{s\geq1}\sum_{(b_0;\vecu) \in E_{2s+1}(\vecv_r')}
	W_B(\omega_1;b_0;\vecu) \\
&=\sum_{s\geq0}\sum_{(b_0;\vecu) \in E_{2s+1}(\vecv_r')}
	\pic(\lrflex{\omega_1}{b_0})\ (\ulflex{\omega_1}{b_0})W_B(\vecu).
\end{align*}
Hence, we obtain the claim.
\end{proof}

\subsection{Automorphisms between $\overline{\ARI}(\Gamma)_\al$ and $\overline{\ARI}(\Gamma)_\il$ and between $\overline{\GARI}(\Gamma)_\as$ and $\overline{\GARI}(\Gamma)_\is$}\label{subsec:map between alternal and alternil}

In this subsection, we prove the recurrence formula (Proposition \ref{prop:recurrence formula of map g}) of $g_B(\vecv_r)$ for $B=\pic$.
By using this recurrence formula, we prove Theorem \ref{thm: ganit(B) is automorphism between al and il or as and is}, that is, the map $\ganit_v(B)$ induces a group isomorphism from $\overline{\ARI}(\Gamma)_\al$ to $\overline{\ARI}(\Gamma)_\il$ and induces a Lie algebra isomorphism from $\overline{\GARI}(\Gamma)_\as$ to $\overline{\GARI}(\Gamma)_\is$.
As a corollary, we obtain a commutative diagram (Corollary \ref{cor:commutative diagram of al,il,as,is}).

We consider the map $\ganit_v(B)$ in the case of $B=\pic$ defined in Example \ref{ex:al,il,as,is mould}.(d).
For our simplicity, we often denote $g_{\pic}$ by $g$ and denote $W_{\pic}(\vecu)$ by $W(\vecu)$.

We prove the following key recurrence formulas of $g$, which is important to prove Theorem \ref{thm: ganit(B) is automorphism between al and il or as and is}.
\begin{prop}\label{prop:recurrence formula of map g}
For $r\geq2$, we have
\begin{align}\label{eqn:recurrence formula of map g}
g(\vecv_r)
&=(\omega_1)g(\vecv_r')
+\pic(\lrflex{\omega_1}{\omega_2})g(\ulflex{\omega_1}{\omega_2},\vecv_r''),
\end{align}
that is, we have
$$
g(\omega_1,\omega_2,\vecv_r'')
=(\omega_1)g(\omega_2,\vecv_r'')
+\pic(\lrflex{\omega_1}{\omega_2})g(\ulflex{\omega_1}{\omega_2},\vecv_r'').
$$
\end{prop}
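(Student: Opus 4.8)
The plan is to start from the expression for $g(\vecv_r)$ given by Lemma \ref{lem:equivalent equation of g}, specialized to $B=\pic$, and then split the sum according to whether the ``first decomposition slot'' $b_0$ is empty or not. Concretely, Lemma \ref{lem:equivalent equation of g} gives
\[
g(\vecv_r)=\sum_{s\geq0}\sum_{(b_0;\vecu)\in E_{2s+1}(\vecv_r')}
\pic(\lrflex{\omega_1}{b_0})\ (\ulflex{\omega_1}{b_0})\,W(\vecu),
\]
and I would apply the partition \eqref{eqn:partition of E} in the form $E_{2s+1}(\vecv_r')=(\{\emptyset\}\times D_{2s}(\vecv_r'))\sqcup D_{2s+1}(\vecv_r')$. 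The term coming from $b_0=\emptyset$ uses $\pic(\lrflex{\omega_1}{\emptyset})=1$ and $\ulflex{\omega_1}{\emptyset}=\omega_1$, so it collapses (after reindexing the sum over $s$ via \eqref{eqn:definition of map g, by using D}) exactly to $(\omega_1)\,g(\vecv_r')$. So the first summand of \eqref{eqn:recurrence formula of map g} should come out essentially for free.

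The real work is in the remaining piece, the sum over $(b_0;\vecu)\in D_{2s+1}(\vecv_r')$ with $b_0\neq\emptyset$, which I want to identify with $\pic(\lrflex{\omega_1}{\omega_2})\,g(\ulflex{\omega_1}{\omega_2},\vecv_r'')$. Here I would write $b_0=\omega_2 b_0''$ (so $b_0$ begins with the letter $\omega_2$, since $\vecv_r'=(\omega_2,\dots,\omega_r)$), and then I would use the flexion identities for $\pic$ collected in Remark \ref{rem:shufflestar product-pic ver.}. The key algebraic input is that, writing $b_0=\omega_2 b_0''$ and $\vecu$ for the rest of the decomposition of $\vecv_r''$, one has a compatibility of the ``$\pic$-weight'' $\pic(\lrflex{\omega_1}{b_0})$ with the weights that appear inside $W(\vecu)$; the relations \eqref{eqn:pic,upperflex}, \eqref{eqn:pic,lowerflex}, \eqref{eqn:pic,Fay identity} and especially the multiplicativity \eqref{eqn:pic,decomposition}, $\pic(\omega)\pic(\eta)=\pic(\omega,\eta)$, are what let $\pic(\lrflex{\omega_1}{b_0})(\ulflex{\omega_1}{b_0})W(\vecu)$ be rewritten as $\pic(\lrflex{\omega_1}{\omega_2})$ times the corresponding $W$-term for the word $(\ulflex{\omega_1}{\omega_2},\vecv_r'')$. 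I expect to need the bijection \eqref{eqn:bijection from E to D} (or \eqref{eqn:bijection from D1 to D}/\eqref{eqn:bijection from Dgeq2 to D}) once more to match the index sets: decompositions of $\vecv_r'$ whose first block starts with $\omega_2$ correspond to decompositions of the shorter word $(\ulflex{\omega_1}{\omega_2},\vecv_r'')$, exactly as in the statement with $\vecv_r$ replaced by this new word.

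The step I expect to be the main obstacle is precisely this bookkeeping identification in the second piece: showing that after substituting $b_0=\omega_2 b_0''$, the flexion $\ulflex{\omega_1}{b_0}$ factors compatibly so that $\pic(\lrflex{\omega_1}{b_0})(\ulflex{\omega_1}{b_0})$ reorganizes into $\pic(\lrflex{\omega_1}{\omega_2})$ times a single letter $\ulflex{\omega_1}{\omega_2}$ followed by the ``residual'' flexions, and that the residual flexions are exactly those defining $W$ on $(\ulflex{\omega_1}{\omega_2},\vecv_r'')$. This is where the defining formulas for the flexion operators $\lrflex{\ }{\ }$ and $\ulflex{\ }{\ }$ (from \cite[Definition 1.8]{FK}) must be unwound carefully, together with \eqref{eqn:pic,decomposition} to peel off $\pic(\lrflex{\omega_1}{\omega_2})$. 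Once that identity of summands is in hand, summing over all decompositions and all $s\geq1$ gives the second term of \eqref{eqn:recurrence formula of map g}, completing the proof. The reformulation as $g(\omega_1,\omega_2,\vecv_r'')=(\omega_1)g(\omega_2,\vecv_r'')+\pic(\lrflex{\omega_1}{\omega_2})g(\ulflex{\omega_1}{\omega_2},\vecv_r'')$ is then just a restatement, since $\vecv_r=(\omega_1,\vecv_r')$ and $\vecv_r'=(\omega_2,\vecv_r'')$.
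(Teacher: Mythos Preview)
Your approach is correct and the flexion identities you single out are exactly the ones needed: $\pic(\lrflex{\omega_1}{(\omega_2 b_0'')})=\pic(\lrflex{\omega_1}{\omega_2})\pic(\lrflex{\omega_1}{b_0''})$ from \eqref{eqn:pic,decomposition}, together with $\lrflex{\omega_1}{b_0''}=\lrflex{(\ulflex{\omega_1}{\omega_2})}{b_0''}$ and $\ulflex{\omega_1}{(\omega_2 b_0'')}=\ulflex{(\ulflex{\omega_1}{\omega_2})}{b_0''}$, which hold at the level of letters (not just after applying $\pic$). With these in hand, the inverse of the bijection \eqref{eqn:bijection from E to D} sends $(b_0;\vecu)\in D_{2s+1}(\vecv_r')$ to $(b_0'';\vecu)\in E_{2s+1}(\vecv_r'')$, and the second piece is recognized as $\pic(\lrflex{\omega_1}{\omega_2})$ times the right-hand side of Lemma~\ref{lem:equivalent equation of g} for the word $(\ulflex{\omega_1}{\omega_2},\vecv_r'')$. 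So there is no gap.

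Your route differs from the paper's in organization rather than in substance. The paper starts from the defining expression \eqref{eqn:definition of map g, by using D} and splits $D_{2s}(\vecv_r)$ via the partition \eqref{eqn:partition of D} into $D_{2s}^{\geq2}(\vecv_r)$ and $D_{2s}^1(\vecv_r)$; the first piece gives $(\omega_1)g(\vecv_r')$, and the second piece is then subjected to another $D^{\geq2}/D^1$ split (this time on the word $\vecv_r'$) before being reassembled into $g(\ulflex{\omega_1}{\omega_2},\vecv_r'')$ directly from the definition. You instead begin from Lemma~\ref{lem:equivalent equation of g} and split via \eqref{eqn:partition of E}, then invoke Lemma~\ref{lem:equivalent equation of g} a second time on the shorter word. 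Both arguments pass through the same intermediate sum $\sum_{s}\sum_{(b_0;\vecu)\in D_{2s+1}(\vecv_r')}\pic(\lrflex{\omega_1}{b_0})(\ulflex{\omega_1}{b_0})W(\vecu)$ and use the same flexion identities; your version is a bit more economical because it recycles Lemma~\ref{lem:equivalent equation of g} rather than redoing its combinatorics from scratch.
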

\begin{proof}
By using the equation \eqref{eqn:definition of map g, by using D} and by using the partition \eqref{eqn:partition of D}, we calculate
\begin{align*}
g(\vecv_r)
=&\sum_{s\geq1}\sum_{(\omega_1a_1;b_1;\vecu)\in D_{2s}^{\geq2}(\vecv_r)}
	\pic(\lrflex{(\omega_1a_1)}{b_1})
	\ \bigl(\ulflex{(\omega_1a_1)}{b_1}\bigr)W(\vecu) \\
&+\sum_{s\geq1}\sum_{(\omega_1;b_1;\vecu)\in D_{2s}^1(\vecv_r)}
	\pic(\lrflex{\omega_1}{b_1})
	\ \bigl(\ulflex{\omega_1}{b_1}\bigr)W(\vecu).
\end{align*}
Because $a_1\neq\emptyset$ in the first summation, we have $\lrflex{(\omega_1a_1)}{b_1}=\lrflex{a_1}{b_1}$ and $\bigl(\ulflex{(\omega_1a_1)}{b_1}\bigr)=(\omega_1)(\ulflex{a_1}{b_1})$.
By using two bijections \eqref{eqn:bijection from Dgeq2 to D}, \eqref{eqn:bijection from D1 to D} for $t=2s$ and for $\omega=\vecv_r$, we get
\begin{align}\label{eqn: decomposition of g}
g(\vecv_r)
=&(\omega_1)\sum_{s\geq1}\sum_{(a_1;b_1;\vecu)\in D_{2s}(\vecv_r')}
	\pic(\lrflex{a_1}{b_1})
	\ \bigl(\ulflex{a_1}{b_1}\bigr) W(\vecu) \\
&+\sum_{s\geq1}\sum_{(b_1;\vecu)\in D_{2s-1}(\vecv_r')}
	\pic(\lrflex{\omega_1}{b_1})
	\ \bigl(\ulflex{\omega_1}{b_1}\bigr)W(\vecu). \nonumber
\end{align}
Here, by the equation \eqref{eqn:definition of map g, by using D}, the first term of the right hand side of the equation \eqref{eqn: decomposition of g} is equal to $(\omega_1)g(\vecv_r')$.
We calculate the second summation as below:
\begin{align*}
&\sum_{s\geq1}\sum_{(b_1;\vecu)\in D_{2s-1}(\vecv_r')}
	\pic(\lrflex{\omega_1}{b_1})
	\ \bigl(\ulflex{\omega_1}{b_1}\bigr)W(\vecu) \\
=&\pic(\lrflex{\omega_1}{\vecv_r'})\ \bigl(\ulflex{\omega_1}{\vecv_r'}\bigr)
+\sum_{s\geq2}\sum_{(\omega_2b_1;\vecu)\in D_{2s-1}^{\geq2}(\vecv_r')}
	\pic(\lrflex{\omega_1}{(\omega_2b_1)})
	\ \bigl(\ulflex{\omega_1}{(\omega_2b_1)}\bigr)W(\vecu) \\
&\hspace{4cm} +\sum_{s\geq2}\sum_{(\omega_2;\vecu)\in D_{2s-1}^1(\vecv_r')}
	\pic(\lrflex{\omega_1}{\omega_2})
	\ \bigl(\ulflex{\omega_1}{\omega_2}\bigr)W(\vecu).
\intertext{Here, for the first term, we have $\pic(\lrflex{\omega_1}{\vecv_r'})=\pic(\lrflex{\omega_1}{\omega_2})\pic(\lrflex{\omega_1}{\vecv_r''})=\pic(\lrflex{\omega_1}{\omega_2})\pic(\lrflex{\ulflex{\omega_1}{\omega_2}}{\vecv_r''})$ and $\bigl(\ulflex{\omega_1}{\vecv_r'}\bigr)=\bigl(\ulflex{(\ulflex{\omega_1}{\omega_2})}{\vecv_r''}\bigr)$.
For the second term, we have $\pic(\lrflex{\omega_1}{(\omega_2b_1)})=\pic(\lrflex{\omega_1}{\omega_2})\pic(\lrflex{\omega_1}{b_1})=\pic(\lrflex{\omega_1}{\omega_2})\pic(\lrflex{\ulflex{\omega_1}{\omega_2}}{b_1})$ and $\ulflex{\omega_1}{(\omega_2b_1)}=\ulflex{(\ulflex{\omega_1}{\omega_2})}{b_1}$.
So by applying two bijections \eqref{eqn:bijection from Dgeq2 to D}, \eqref{eqn:bijection from D1 to D} for $t=2s-1$ and for $\omega=\vecv_r'$ to the second and the third term, we get}
=&\pic(\lrflex{\omega_1}{\omega_2})\left\{\pic(\lrflex{\ulflex{\omega_1}{\omega_2}}{\vecv_r''})\ \bigl(\ulflex{(\ulflex{\omega_1}{\omega_2})}{\vecv_r''}\bigr)
\vphantom{\sum_{(\omega_2,a_2,b_2,\cdots ,a_s,b_s)\in D_{2s-1}^1(\vecv_r')}}\right. \\
&\left.+\sum_{s\geq2}\sum_{(b_1;\vecu)\in D_{2s-1}(\vecv_r'')}
	\pic(\lrflex{\ulflex{\omega_1}{\omega_2}}{b_1})
	\ \bigl(\ulflex{(\ulflex{\omega_1}{\omega_2})}{b_1}\bigr)W(\vecu)
+\sum_{s\geq2}\sum_{\vecu \in D_{2s-2}(\vecv_r'')}
	\ \bigl(\ulflex{\omega_1}{\omega_2}\bigr)W(\vecu)\right\}.
\intertext{By putting the first and the second term together and by changing the variable $s\geq2$ to $s\geq1$ in the third term, we have}
=&\pic(\lrflex{\omega_1}{\omega_2}) \\
&\hspace{-1.em}\cdot\left\{\sum_{s\geq1}\sum_{(b_1;\vecu)\in D_{2s-1}(\vecv_r'')}
	\pic(\lrflex{\ulflex{\omega_1}{\omega_2}}{b_1})
	\ \bigl(\ulflex{(\ulflex{\omega_1}{\omega_2})}{b_1}\bigr)W(\vecu)
+\sum_{s\geq1}\sum_{\vecu \in D_{2s}(\vecv_r'')}
	\ \bigl(\ulflex{\omega_1}{\omega_2}\bigr)W(\vecu)\right\}.
\intertext{By applying two bijections \eqref{eqn:bijection from Dgeq2 to D}, \eqref{eqn:bijection from D1 to D} for $t=2s$ and for $\omega=(\ulflex{\omega_1}{\omega_2},\vecv_r'')$ to the first and the second term, we have}
=&\pic(\lrflex{\omega_1}{\omega_2}) \\
&\hspace{-1.em}\cdot\left\{\sum_{s\geq1}\sum_{(\ulflex{\omega_1}{\omega_2};b_1;\vecu)\in D_{2s}^1((\ulflex{\omega_1}{\omega_2},\vecv_r''))}
	\pic(\lrflex{\ulflex{\omega_1}{\omega_2}}{b_1})
	\ \bigl(\ulflex{(\ulflex{\omega_1}{\omega_2})}{b_1}\bigr) W(\vecu) \right. \\
&\left.+\sum_{s\geq1}\sum_{(\ulflex{\omega_1}{\omega_2}a_1;b_1;\vecu)\in D_{2s}^{\geq2}((\ulflex{\omega_1}{\omega_2},\vecv_r''))}
	\pic(\lrflex{(\ulflex{\omega_1}{\omega_2}a_1)}{b_1})
	\ \bigl(\ulflex{(\ulflex{\omega_1}{\omega_2}a_1)}{b_1}\bigr) W(\vecu)\right\} \\
=&\pic(\lrflex{\omega_1}{\omega_2})
\sum_{s\geq1}\sum_{(a_1;b_1;\vecu)\in D_{2s}((\ulflex{\omega_1}{\omega_2},\vecv_r''))}
	\pic(\lrflex{a_1}{b_1})
	\ \bigl(\ulflex{a_1}{b_1}\bigr) W(\vecu) \\
=&\pic(\lrflex{\omega_1}{\omega_2})
\sum_{s\geq1}\sum_{\vecu\in D_{2s}((\ulflex{\omega_1}{\omega_2},\vecv_r''))}
	W(\vecu).
\end{align*}
By the equation \eqref{eqn:definition of map g, by using D}, we see that the last member is equal to $\pic(\lrflex{\omega_1}{\omega_2})g(\ulflex{\omega_1}{\omega_2},\vecv_r'')$, that is, the second term of the right hand side of the equation \eqref{eqn: decomposition of g} is equal to $\pic(\lrflex{\omega_1}{\omega_2})g(\ulflex{\omega_1}{\omega_2},\vecv_r'')$. 
Hence, we obtain the claim.
\end{proof}

\begin{lem}\label{lem:lemma on shuffle of map g}
Let $r,s\geq1$ and put $\alpha:=(\omega_2,\dots,\omega_r)$ and $\beta:=(\omega_{r+1},\dots,\omega_{r+s})$. Then we have
\begin{align}\label{eqn:lemma on shuffle of map g}
g(\omega_1,\alpha \shuffle_* \beta) 
=\sum_{p,q\geq0}\sum_{\substack{
	(b_0;\vecu)\in E_{2p+1}(\alpha) \\
	(d_0;\vecv)\in E_{2q+1}(\beta)}}
\pic(\lrflex{\omega_1}{(b_0d_0)})
\ \bigl(\ulflex{\omega_1}{(b_0d_0)}\bigr) \left\{ W(\vecu)\shuffle W(\vecv) \right\}.
\end{align}
\end{lem}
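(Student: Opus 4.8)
The plan is to prove the identity \eqref{eqn:lemma on shuffle of map g} by induction on $N:=l(\alpha)+l(\beta)=r+s-2$, noting that every identity used below — the definition of $g=g_{\pic}$, the recurrence of Proposition \ref{prop:recurrence formula of map g}, Lemma \ref{lem:equivalent equation of g}, and \eqref{eqn:lemma on shuffle of map g} itself — remains valid when an arbitrary, possibly flexed, letter of $V_\Z$ plays the role of $\omega_1$, which is what makes the induction self-referential. For the base case it suffices to treat $\alpha=\emptyset$ or $\beta=\emptyset$ (in particular $N=0$): if $\alpha=\emptyset$ then $\alpha\shuffle_*\beta=\beta$, the only surviving term on the right of \eqref{eqn:lemma on shuffle of map g} is the one with $p=0$, $(b_0;\vecu)=(\emptyset)$ and $W(\vecu)=\emptyset$, so $W(\vecu)\shuffle W(\vecv)=W(\vecv)$, $\lrflex{\omega_1}{(b_0d_0)}=\lrflex{\omega_1}{d_0}$ and $\ulflex{\omega_1}{(b_0d_0)}=\ulflex{\omega_1}{d_0}$, and the resulting equality is precisely Lemma \ref{lem:equivalent equation of g} applied to $g(\omega_1,\beta)$; the case $\beta=\emptyset$ is symmetric.

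For the inductive step assume $r,s\geq 2$ and write $\alpha=(\omega_2)\alpha'$, $\beta=(\omega_{r+1})\beta'$ with $\alpha'=(\omega_3,\dots,\omega_r)$, $\beta'=(\omega_{r+2},\dots,\omega_{r+s})$. Since the $v_i$ are pairwise distinct we have $v_2\neq v_{r+1}$, so the recursion \eqref{eqn:shufflestar product-pic ver.} for $\shuffle_*$ applies verbatim and expands $\alpha\shuffle_*\beta$ into four summands. Applying the $\Q$-linear operator $g(\omega_1,{-})$ and then the recurrence of Proposition \ref{prop:recurrence formula of map g} to each of these — each being of the form $g(\omega_1,\xi,\dots)$ with $\xi\in\{\omega_2,\,\omega_{r+1},\,\ulflex{\omega_2}{\omega_{r+1}},\,\ulflex{\omega_{r+1}}{\omega_2}\}$ a single letter — yields, for each, a part with leading factor $(\omega_1)$ and a part with leading factor $\pic(\lrflex{\omega_1}{\xi})\,(\ulflex{\omega_1}{\xi})$. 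Comparing the four parts of the first type against \eqref{eqn:shufflestar product-pic ver.} once more, they collect exactly into $(\omega_1)\,g(\alpha\shuffle_*\beta)$; each of the four parts of the second type has the shape $\pic(\lrflex{\omega_1}{\xi})\,(\ulflex{\omega_1}{\xi})\,g(\xi,\gamma_1\shuffle_*\gamma_2)$ with $l(\gamma_1)+l(\gamma_2)<N$, so the induction hypothesis applies to it (with the indicated single, possibly flexed, letter playing the role of $\omega_1$).

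It then remains to match what we obtained with the right-hand side of \eqref{eqn:lemma on shuffle of map g}, which I would partition according to whether $b_0$ and $d_0$ are empty, using \eqref{eqn:partition of E} and, within the nonempty blocks, \eqref{eqn:partition of D} together with the bijections \eqref{eqn:bijection from Dgeq2 to D}, \eqref{eqn:bijection from D1 to D}, \eqref{eqn:bijection from E to D} to peel off the first letters of $\alpha$ and of $\beta$. By \eqref{eqn:definition of map g, by using D} the block $b_0=d_0=\emptyset$ equals $(\omega_1)\bigl(g(\alpha)\shuffle g(\beta)\bigr)$, and matching it against the collected $(\omega_1)$-leading part reduces to the multiplicativity identity $g(\alpha\shuffle_*\beta)=g(\alpha)\shuffle g(\beta)$; this identity is the $\omega_1$-less shadow of the present lemma and is proved by the same induction on $l(\alpha)+l(\beta)$ (peeling off the first letters via \eqref{eqn:shufflestar product-pic ver.}, using Proposition \ref{prop:recurrence formula of map g} and the recursion \eqref{eqn:shuffle product} for $\shuffle$), so in practice one runs the two inductions in tandem. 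For the three remaining blocks one inserts the induction hypothesis into the relevant parts of the second type and reorganizes: here one repeatedly uses the decomposition \eqref{eqn:pic,decomposition}, the flexion identities \eqref{eqn:pic,upperflex}, \eqref{eqn:pic,lowerflex} and the Fay-type identity \eqref{eqn:pic,Fay identity} to fuse the nested flexion coefficients (products such as $\pic(\lrflex{\omega_2}{\omega_{r+1}})\,\pic(\lrflex{\omega_1}{\ulflex{\omega_2}{\omega_{r+1}}})$) into the single coefficients $\pic(\lrflex{\omega_1}{(b_0d_0)})$ that occur on the right, and the recursion \eqref{eqn:shuffle product} to rebuild the shuffles $W(\vecu)\shuffle W(\vecv)$.

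The main obstacle is exactly this last matching step: tracking the (up to eight) sub-terms produced in the inductive step, identifying which combination of them lands in which block of the partition of the right-hand side, and, above all, verifying the flexion-coefficient identities by which the nested $\pic$-flexions coalesce into the coefficients $\pic(\lrflex{\omega_1}{(b_0d_0)})$. This is the calculation flagged as not straightforward in the introduction, and its careful execution — together with the tandem induction for $g(\alpha\shuffle_*\beta)=g(\alpha)\shuffle g(\beta)$ — is where essentially all the content of the lemma lies.
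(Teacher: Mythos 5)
Your strategy is the same as the paper's: induct on the total length, expand $\alpha\shuffle_*\beta$ by \eqref{eqn:shufflestar product-pic ver.}, apply Proposition \ref{prop:recurrence formula of map g} to each of the four resulting terms, fuse the nested flexion coefficients via Remark \ref{rem:shufflestar product-pic ver.}.(b), and reassemble using the partitions \eqref{eqn:partition of D}, \eqref{eqn:partition of E}, the bijections \eqref{eqn:bijection from Dgeq2 to D}--\eqref{eqn:bijection from E to D} and Lemma \ref{lem:equivalent equation of g}. As a proof, however, the proposal has a genuine gap: the decisive step --- showing that the eight sub-terms, after insertion of the induction hypothesis, reorganize exactly into the sum over $E_{2p+1}(\alpha)\times E_{2q+1}(\beta)$ with the fused coefficients $\pic(\lrflex{\omega_1}{(b_0d_0)})$ --- is only announced, and you yourself flag it as the place ``where essentially all the content of the lemma lies.'' This is precisely what the paper executes: it groups the eight terms as in \eqref{eqn:g(omega,shuffle_*)=3-term}, evaluates the first two groups (each $(\omega_1)$-leading term paired with its $\pic(\lrflex{\omega_2}{\omega_{r+1}})$- resp. $\pic(\lrflex{\omega_{r+1}}{\omega_2})$-correction) by the induction hypothesis, \eqref{eqn:pic relation-1}, \eqref{eqn:pic relation-2}, the partition \eqref{eqn:partition of E} and Lemma \ref{lem:equivalent equation of g}, obtaining exactly the block $b_0=d_0=\emptyset$ of the right-hand side, and turns the third group into the complementary blocks.

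Two concrete points to repair. First, your treatment of the $(\omega_1)$-leading parts, collecting them into $(\omega_1)g(\alpha\shuffle_*\beta)$ and invoking $g(\alpha\shuffle_*\beta)=g(\alpha)\shuffle g(\beta)$, reverses the paper's logical order: that multiplicativity is Proposition \ref{prop:shuffle of map g} and is \emph{deduced} from the present lemma. Your tandem induction can be made non-circular (multiplicativity for words with $m$ letters in total uses only instances of the lemma involving at most $m$ letters, which are available when the lemma is being proved for $m+1$ letters), but this ordering must be checked and stated, or the multiplicativity proved by an independent induction as you suggest; the paper avoids the issue entirely by pairing each $(\omega_1)$-leading term with a correction term and never invoking the multiplicativity. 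Second, your induction as written does not cover the case $l(\beta)=1$ with $\alpha\neq\emptyset$: the base case requires $\alpha=\emptyset$ or $\beta=\emptyset$, while the inductive step assumes $r,s\geq2$. The same expansion works verbatim when $\beta'=\emptyset$ (with $E_{2q+1}(\emptyset)$ interpreted as in the paper's footnote), so the fix is merely to allow $s\geq1$ in the inductive step, but as stated the case analysis is incomplete.
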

\begin{proof}
By the definition of $E_t(\omega)$, we have
\footnote{Here, it means that $E_t(\emptyset)$ is the set consisting of only the empty word $\emptyset$ when $t=1$ and is the emptyset when $t\geq2$.}
$$
E_{2p+1}(\emptyset)
=\left\{\begin{array}{cl}
	\{\emptyset\} & (p=0), \\
	\emptyset & (p\geq1),
\end{array}\right.
$$
for $p\geq0$, so the equation \eqref{eqn:lemma on shuffle of map g} for $\alpha=\emptyset$ (i.e. $r=1$) is equal to
\begin{align*}
g(\omega_1,\beta) 
=\sum_{q\geq0}\sum_{(d_0;\vecv)\in E_{2q+1}(\beta)}
\pic(\lrflex{\omega_1}{d_0})
\ \bigl(\ulflex{\omega_1}{d_0}\bigr) W(\vecv).
\end{align*}
Because this equation follows from Lemma \ref{lem:equivalent equation of g}, the equation \eqref{eqn:lemma on shuffle of map g} holds for $\alpha=\emptyset$.
Therefore, in the following, we assume $\alpha\neq\emptyset$, that is, $r\geq2$.

We prove \eqref{eqn:lemma on shuffle of map g} by induction on $r+s$ $(\geq3)$.
When $r+s=3$ (i.e. $r=2$, $s=1$), we have $\alpha=(\omega_2)$ and $\beta=(\omega_3)$ and 
$$
\alpha\shuffle_*\beta
=(\omega_2,\omega_3) + (\omega_3,\omega_2) 
- \pic(\lrflex{\omega_2}{\omega_3})\ (\ulflex{\omega_2}{\omega_3})
- \pic(\lrflex{\omega_3}{\omega_2})\ (\ulflex{\omega_3}{\omega_2}).
$$
So the left hand side of \eqref{eqn:lemma on shuffle of map g} is equal to
\begin{align*}
&g(\omega_1,\alpha\shuffle_*\beta) \\
&=g(\omega_1,\omega_2,\omega_3) + g(\omega_1,\omega_3,\omega_2) 
- \pic(\lrflex{\omega_2}{\omega_3})g(\omega_1,\ulflex{\omega_2}{\omega_3})
- \pic(\lrflex{\omega_3}{\omega_2})g(\omega_1,\ulflex{\omega_3}{\omega_2}) \\
&=(\omega_1,\omega_2,\omega_3)
	+\pic(\lrflex{\omega_2}{\omega_3})\ (\ulflex{\omega_1,\omega_2}{\omega_3})
	+\pic(\lrflex{\omega_1}{\omega_2})\ (\ulflex{\omega_1}{\omega_2},\omega_3)
	+\pic(\lrflex{\omega_1}{(\omega_2,\omega_3)})\ (\ulflex{\omega_1}{(\omega_2,\omega_3)}) \\
&\quad + (\omega_1,\omega_3,\omega_2)
	+\pic(\lrflex{\omega_3}{\omega_2})\ (\ulflex{\omega_1,\omega_3}{\omega_2})
	+\pic(\lrflex{\omega_1}{\omega_3})\ (\ulflex{\omega_1}{\omega_3},\omega_2)
	+\pic(\lrflex{\omega_1}{(\omega_3,\omega_2)})\ (\ulflex{\omega_1}{(\omega_3,\omega_2)}) \\
&\quad - \pic(\lrflex{\omega_2}{\omega_3})\left\{ (\omega_1,\ulflex{\omega_2}{\omega_3})+\pic(\lrflex{\omega_1}{\ulflex{\omega_2}{\omega_3}})\ (\ulflex{\omega_1}{\ulflex{\omega_2}{\omega_3}}) \right\} \\
&\qquad - \pic(\lrflex{\omega_3}{\omega_2})\left\{ (\omega_1,\ulflex{\omega_3}{\omega_2})+\pic(\lrflex{\omega_1}{\ulflex{\omega_3}{\omega_2}})\ (\ulflex{\omega_1}{\ulflex{\omega_3}{\omega_2}}) \right\}.
\intertext{By using $(\ulflex{\omega_1}{\ulflex{\omega_2}{\omega_3}})=(\ulflex{\omega_1}{\ulflex{\omega_3}{\omega_2}})=(\ulflex{\omega_1}{(\omega_2,\omega_3)})$ and by using Remark \ref{rem:shufflestar product-pic ver.}.(b), we get}
&=(\omega_1,\omega_2,\omega_3)
	+\pic(\lrflex{\omega_1}{\omega_2})\ (\ulflex{\omega_1}{\omega_2},\omega_3)
	+\pic(\lrflex{\omega_1}{(\omega_2,\omega_3)})\ (\ulflex{\omega_1}{(\omega_2,\omega_3)}) \\
&\quad + (\omega_1,\omega_3,\omega_2)
	+\pic(\lrflex{\omega_1}{\omega_3})\ (\ulflex{\omega_1}{\omega_3},\omega_2).
\end{align*}
While, by the definition of $E_t(\omega)$, we have
$$
E_{2p+1}(\omega_2)
=\left\{\begin{array}{cl}
	\{(\omega_2)\} & (p=0), \\
	\{(\emptyset;\omega_2;\emptyset)\} & (p=1), \\
	\emptyset & (p\geq2),
\end{array}\right.
\quad
E_{2q+1}(\omega_3)
=\left\{\begin{array}{cl}
	\{(\omega_3)\} & (q=0), \\
	\{(\emptyset;\omega_3;\emptyset)\} & (q=1), \\
	\emptyset & (q\geq2),
\end{array}\right.
$$
for $p,q\geq0$. Therefore, the right hand side of \eqref{eqn:lemma on shuffle of map g} is equal to
\begin{align*}
&\sum_{p,q\in \{0,1\}}\sum_{\substack{
	(b_0;\vecu)\in E_{2p+1}(\omega_2) \\
	(d_0;\vecv)\in E_{2q+1}(\omega_3)}}
\pic(\lrflex{\omega_1}{(b_0d_0)})
\ \bigl(\ulflex{\omega_1}{(b_0d_0)}\bigr) \left\{ W(\vecu)\shuffle W(\vecv) \right\} \\
&=\sum_{p\in \{0,1\}}\sum_{(b_0;\vecu)\in E_{2p+1}(\omega_2)}
\pic(\lrflex{\omega_1}{(b_0\omega_3)})
\ \bigl(\ulflex{\omega_1}{(b_0\omega_3)}\bigr) \left\{ W(\vecu)\shuffle \emptyset \right\} \\
&\quad +\sum_{p\in \{0,1\}}\sum_{(b_0;\vecu)\in E_{2p+1}(\omega_2)}
\pic(\lrflex{\omega_1}{b_0})
\ \bigl(\ulflex{\omega_1}{b_0}\bigr) \left\{ W(\vecu)\shuffle W(\omega_3;\emptyset) \right\} \\
&= \pic(\lrflex{\omega_1}{(\omega_2,\omega_3)})
\ \bigl(\ulflex{\omega_1}{(\omega_2,\omega_3)}\bigr) \left\{ \emptyset\shuffle \emptyset \right\} 
+\pic(\lrflex{\omega_1}{\omega_3})
\ \bigl(\ulflex{\omega_1}{\omega_3}\bigr) \left\{ W(\omega_2;\emptyset)\shuffle \emptyset \right\} \\
&\quad +\pic(\lrflex{\omega_1}{\omega_2})
\ \bigl(\ulflex{\omega_1}{\omega_2}\bigr) \left\{ \emptyset\shuffle W(\omega_3;\emptyset) \right\}
+\pic(\lrflex{\omega_1}{\emptyset})
\ \bigl(\ulflex{\omega_1}{\emptyset}\bigr) \left\{ W(\omega_2;\emptyset)\shuffle W(\omega_3;\emptyset) \right\} \\
&= \pic(\lrflex{\omega_1}{(\omega_2,\omega_3)})
\ \bigl(\ulflex{\omega_1}{(\omega_2,\omega_3)}\bigr)
+\pic(\lrflex{\omega_1}{\omega_3})
\ \bigl(\ulflex{\omega_1}{\omega_3},\omega_2\bigr) \\
&\quad +\pic(\lrflex{\omega_1}{\omega_2})
\ \bigl(\ulflex{\omega_1}{\omega_2},\omega_3\bigr)
+\left\{ \bigl(\omega_1,\omega_2,\omega_3\bigr)+\bigl(\omega_1,\omega_3,\omega_2\bigr) \right\}.
\end{align*}
Hence, the equation \eqref{eqn:lemma on shuffle of map g} holds for $r+s=3$.

Assume that \eqref{eqn:lemma on shuffle of map g} holds for $r+s\leq t\, (\geq3)$.
When $r+s=t+1$, by the equation \eqref{eqn:shufflestar product-pic ver.} of the product $\shuffle_*$, we get
\begin{align*}
&g(\omega_1,\alpha \shuffle_* \beta) \\
&=g(\omega_1,\omega_2,\alpha' \shuffle_* \beta)
+g(\omega_1,\omega_{r+1},\alpha \shuffle_* \beta') \\
&\quad -\pic(\lrflex{\omega_2}{\omega_{r+1}})g(\omega_1,\ulflex{\omega_2}{\omega_{r+1}},\alpha' \shuffle_* \beta')
-\pic(\lrflex{\omega_{r+1}}{\omega_2})g(\omega_1,\ulflex{\omega_{r+1}}{\omega_2},\alpha' \shuffle_* \beta').
\intertext{By applying Proposition \ref{prop:recurrence formula of map g} to each term, we get}
&=(\omega_1)g(\omega_2,\alpha' \shuffle_* \beta)
	+\pic(\lrflex{\omega_1}{\omega_2})g(\ulflex{\omega_1}{\omega_2},\alpha' \shuffle_* \beta) \\
&\quad +(\omega_1)g(\omega_{r+1},\alpha \shuffle_* \beta')
	+\pic(\lrflex{\omega_1}{\omega_{r+1}})g(\ulflex{\omega_1}{\omega_{r+1}},\alpha \shuffle_* \beta') \\
&\quad -\pic(\lrflex{\omega_2}{\omega_{r+1}})
	\left\{ (\omega_1)g(\ulflex{\omega_2}{\omega_{r+1}},\alpha' \shuffle_* \beta')
	+\pic(\lrflex{\omega_1}{\ulflex{\omega_2}{\omega_{r+1}}})g(\ulflex{\omega_1}{\ulflex{\omega_2}{\omega_{r+1}}},\alpha' \shuffle_* \beta') \right\} \\
&\quad -\pic(\lrflex{\omega_{r+1}}{\omega_2})
	\left\{ (\omega_1)g(\ulflex{\omega_{r+1}}{\omega_2},\alpha' \shuffle_* \beta')
	+\pic(\lrflex{\omega_1}{\ulflex{\omega_{r+1}}{\omega_2}})g(\ulflex{\omega_1}{\ulflex{\omega_{r+1}}{\omega_2}},\alpha' \shuffle_* \beta') \right\}.
\intertext{Because $\pic(\lrflex{\omega_1}{\ulflex{\omega_2}{\omega_{r+1}}})=\pic(\lrflex{\omega_1}{\omega_2})$ and $\ulflex{\omega_1}{\ulflex{\omega_2}{\omega_{r+1}}}=\ulflex{\omega_1}{(\omega_2,\omega_{r+1})}$, we have}
&=(\omega_1)g(\omega_2,\alpha' \shuffle_* \beta)
	+\pic(\lrflex{\omega_1}{\omega_2})g(\ulflex{\omega_1}{\omega_2},\alpha' \shuffle_* \beta) \\
&\quad +(\omega_1)g(\omega_{r+1},\alpha \shuffle_* \beta')
	+\pic(\lrflex{\omega_1}{\omega_{r+1}})g(\ulflex{\omega_1}{\omega_{r+1}},\alpha \shuffle_* \beta') \\
&\quad -\pic(\lrflex{\omega_2}{\omega_{r+1}})
	\left\{ (\omega_1)g(\ulflex{\omega_2}{\omega_{r+1}},\alpha' \shuffle_* \beta')
	+\pic(\lrflex{\omega_1}{\omega_2})g(\ulflex{\omega_1}{(\omega_2,\omega_{r+1})},\alpha' \shuffle_* \beta') \right\} \\
&\quad -\pic(\lrflex{\omega_{r+1}}{\omega_2})
	\left\{ (\omega_1)g(\ulflex{\omega_{r+1}}{\omega_2},\alpha' \shuffle_* \beta')
	+\pic(\lrflex{\omega_1}{\omega_{r+1}})g(\ulflex{\omega_1}{(\omega_2,\omega_{r+1})},\alpha' \shuffle_* \beta') \right\}.
\end{align*}
By using Remark \ref{rem:shufflestar product-pic ver.}.(b) (i.e. $\pic(\lrflex{\omega_2}{\omega_{r+1}})\pic(\lrflex{\omega_1}{\omega_2})+\pic(\lrflex{\omega_{r+1}}{\omega_2})\pic(\lrflex{\omega_1}{\omega_{r+1}})=\pic(\lrflex{\omega_1}{(\omega_2,\omega_{r+1})})$) and by rearranging each term, we get
\begin{align}\label{eqn:g(omega,shuffle_*)=3-term}
g(\omega_1,\alpha \shuffle_* \beta)
&=\left\{ (\omega_1)g(\omega_2,\alpha' \shuffle_* \beta)
	-\pic(\lrflex{\omega_2}{\omega_{r+1}})
	(\omega_1)g(\ulflex{\omega_2}{\omega_{r+1}},\alpha' \shuffle_* \beta') \right\} \\
&\quad + \left\{ (\omega_1)g(\omega_{r+1},\alpha \shuffle_* \beta')
	-\pic(\lrflex{\omega_{r+1}}{\omega_2})
	(\omega_1)g(\ulflex{\omega_{r+1}}{\omega_2},\alpha' \shuffle_* \beta') \right\} \nonumber\\
&\qquad +\left\{ \pic(\lrflex{\omega_1}{\omega_2})g(\ulflex{\omega_1}{\omega_2},\alpha' \shuffle_* \beta)
	+\pic(\lrflex{\omega_1}{\omega_{r+1}})g(\ulflex{\omega_1}{\omega_{r+1}},\alpha \shuffle_* \beta') \right. \nonumber\\
&\qquad\qquad \left. -\pic(\lrflex{\omega_1}{(\omega_2,\omega_{r+1})})
g(\ulflex{\omega_1}{(\omega_2,\omega_{r+1})},\alpha' \shuffle_* \beta') \right\}. \nonumber
\end{align}
In the following, we calculate the above each 3 terms.

Firstly, by induction hypothesis, we calculate the first term of the right hand side of \eqref{eqn:g(omega,shuffle_*)=3-term} as below:
\begin{align*}
&(\omega_1)g(\omega_2,\alpha' \shuffle_* \beta)
	-\pic(\lrflex{\omega_2}{\omega_{r+1}})
	(\omega_1)g(\ulflex{\omega_2}{\omega_{r+1}},\alpha' \shuffle_* \beta') \\
&=(\omega_1)
\sum_{p,q\geq0}\sum_{\substack{
		(b_0;\vecu)\in E_{2p+1}(\alpha') \\
		(d_0;\vecv)\in E_{2q+1}(\beta)}}
	\pic(\lrflex{\omega_2}{(b_0d_0)})
	\ \bigl(\ulflex{\omega_2}{(b_0d_0)}\bigr) \left\{ W(\vecu)\shuffle W(\vecv) \right\} \\
&\quad -\pic(\lrflex{\omega_2}{\omega_{r+1}})
	(\omega_1)
	\sum_{p,q\geq0}\sum_{\substack{
		(b_0;\vecu)\in E_{2p+1}(\alpha') \\
		(d_0;\vecv)\in E_{2q+1}(\beta')}}
	\pic(\lrflex{(\ulflex{\omega_2}{\omega_{r+1}})}{(b_0d_0)})
	\ \bigl(\ulflex{(\ulflex{\omega_2}{\omega_{r+1}})}{(b_0d_0)}\bigr) \left\{ W(\vecu)\shuffle W(\vecv) \right\}.
\end{align*}
Here, by using Remark \ref{rem:shufflestar product-pic ver.}.(b), we have
\begin{align}\label{eqn:pic relation-1}
\pic(\lrflex{\omega_2}{\omega_{r+1}})\pic(\lrflex{(\ulflex{\omega_2}{\omega_{r+1}})}{(b_0d_0)})
	&=\pic(\lrflex{\omega_2}{\omega_{r+1}})\pic(\lrflex{\omega_2}{(b_0d_0)}) \\
	&=\pic(\lrflex{\omega_2}{(b_0(\omega_{r+1}d_0))}), \nonumber\\ 
\label{eqn:pic relation-2}
\bigl(\ulflex{(\ulflex{\omega_2}{\omega_{r+1}})}{(b_0d_0)}\bigr)
	&=\bigl(\ulflex{\omega_2}{(b_0(\omega_{r+1}d_0))}\bigr),
\end{align}
so by using these equations and using the bijection \eqref{eqn:bijection from E to D}, we get
\begin{align*}
&(\omega_1)g(\omega_2,\alpha' \shuffle_* \beta)
	-\pic(\lrflex{\omega_2}{\omega_{r+1}})
	(\omega_1)g(\ulflex{\omega_2}{\omega_{r+1}},\alpha' \shuffle_* \beta') \\
&=(\omega_1)
\sum_{p,q\geq0}\sum_{\substack{
	(b_0;\vecu)\in E_{2p+1}(\alpha') \\
	(d_0;\vecv)\in E_{2q+1}(\beta)}}
\pic(\lrflex{\omega_2}{(b_0d_0)})
\ \bigl(\ulflex{\omega_2}{(b_0d_0)}\bigr) \left\{ W(\vecu)\shuffle W(\vecv) \right\} \\
&\quad -(\omega_1)
	\sum_{p,q\geq0}\sum_{\substack{
		(b_0;\vecu)\in E_{2p+1}(\alpha') \\
		(d_0;\vecv)\in D_{2q+1}(\beta)}}
	\pic(\lrflex{\omega_2}{(b_0d_0)})
	\ \bigl(\ulflex{\omega_2}{(b_0d_0)}\bigr) \left\{ W(\vecu)\shuffle W(\vecv) \right\}.
\intertext{By using the partition \eqref{eqn:partition of E} for $t=2q+1$ and by using $E_1(\beta)=D_1(\beta)$, we have}
&=(\omega_1)
\sum_{\substack{
	p\geq0 \\
	q\geq1}}\sum_{\substack{
	(b_0;\vecu)\in E_{2p+1}(\alpha') \\
	(d_0;\vecv)\in \{\emptyset\}\times D_{2q}(\beta)}}
\pic(\lrflex{\omega_2}{(b_0d_0)})
\ \bigl(\ulflex{\omega_2}{(b_0d_0)}\bigr) \left\{ W(\vecu)\shuffle W(\vecv) \right\} \\
&=(\omega_1)
\sum_{\substack{
	p\geq0 \\
	q\geq1}}\sum_{\substack{
	(b_0;\vecu)\in E_{2p+1}(\alpha') \\
	\vecv\in D_{2q}(\beta)}}
\pic(\lrflex{\omega_2}{b_0})
\ \bigl(\ulflex{\omega_2}{b_0}\bigr) \left\{ W(\vecu)\shuffle W(\vecv) \right\}.
\end{align*}
Hence, by using Lemma \ref{lem:equivalent equation of g} (change the variables $\vecv_r$ to $\beta$), we obtain
\begin{align}\label{eqn:first term formula of g}
&(\omega_1)g(\omega_2,\alpha' \shuffle_* \beta)
	-\pic(\lrflex{\omega_2}{\omega_{r+1}})
	(\omega_1)g(\ulflex{\omega_2}{\omega_{r+1}},\alpha' \shuffle_* \beta') \\
&=(\omega_1)
\sum_{p,q\geq0}\sum_{\substack{
	(b_0;\vecu)\in E_{2p+1}(\alpha') \\
	(d_0;\vecv)\in E_{2q+1}(\beta')}}
	\pic(\lrflex{\omega_2}{b_0})\pic(\lrflex{\omega_{r+1}}{d_0})
	\ \bigl(\ulflex{\omega_2}{b_0}\bigr) \left\{ W(\vecu)\shuffle \bigl(\ulflex{\omega_{r+1}}{d_0}\bigr)W(\vecv) \right\}. \nonumber
\end{align}
Similarly, for the second term of the right hand side of \eqref{eqn:g(omega,shuffle_*)=3-term}, we obtain
\begin{align}\label{eqn:second term formula of g}
&(\omega_1)g(\omega_{r+1},\alpha \shuffle_* \beta')
	-\pic(\lrflex{\omega_{r+1}}{\omega_2})
	(\omega_1)g(\ulflex{\omega_{r+1}}{\omega_2},\alpha' \shuffle_* \beta') \\
&=(\omega_1)
\sum_{p,q\geq0}\sum_{\substack{
	(b_0;\vecu)\in E_{2p+1}(\alpha') \\
	(d_0;\vecv)\in E_{2q+1}(\beta')}}
	\pic(\lrflex{\omega_2}{b_0})\pic(\lrflex{\omega_{r+1}}{d_0})
	\ \bigl(\ulflex{\omega_{r+1}}{d_0}\bigr) \left\{ \bigl(\ulflex{\omega_2}{b_0}\bigr)W(\vecu)\shuffle W(\vecv) \right\}. \nonumber
\end{align}
By taking the sum of \eqref{eqn:first term formula of g} and \eqref{eqn:second term formula of g} and by using the definition \eqref{eqn:shuffle product} of the product $\shuffle$, we get
\begin{align*}
&\left\{ (\omega_1)g(\omega_2,\alpha' \shuffle_* \beta)
	-\pic(\lrflex{\omega_2}{\omega_{r+1}})
	(\omega_1)g(\ulflex{\omega_2}{\omega_{r+1}},\alpha' \shuffle_* \beta') \right\} \\
&+\left\{ (\omega_1)g(\omega_{r+1},\alpha \shuffle_* \beta')
	-\pic(\lrflex{\omega_{r+1}}{\omega_2})
	(\omega_1)g(\ulflex{\omega_{r+1}}{\omega_2},\alpha' \shuffle_* \beta') \right\} \\
&=(\omega_1)
\sum_{p,q\geq0}\sum_{\substack{
	(b_0;\vecu)\in E_{2p+1}(\alpha') \\
	(d_0;\vecv)\in E_{2q+1}(\beta')}}
	\pic(\lrflex{\omega_2}{b_0})\pic(\lrflex{\omega_{r+1}}{d_0})
	\ \left\{ \bigl(\ulflex{\omega_2}{b_0}\bigr)W(\vecu) \shuffle \bigl(\ulflex{\omega_{r+1}}{d_0}\bigr)W(\vecv) \right\}.
\end{align*}
Hence, by using Lemma \ref{lem:equivalent equation of g}, we obtain
\begin{align}\label{eqn:first+second terms formula of g}
&\left\{ (\omega_1)g(\omega_2,\alpha' \shuffle_* \beta)
	-\pic(\lrflex{\omega_2}{\omega_{r+1}})
	(\omega_1)g(\ulflex{\omega_2}{\omega_{r+1}},\alpha' \shuffle_* \beta') \right\} \\
&+\left\{ (\omega_1)g(\omega_{r+1},\alpha \shuffle_* \beta')
	-\pic(\lrflex{\omega_{r+1}}{\omega_2})
	(\omega_1)g(\ulflex{\omega_{r+1}}{\omega_2},\alpha' \shuffle_* \beta') \right\} \nonumber\\
&=(\omega_1)
\sum_{p,q\geq1}\sum_{\substack{
	(b_0;\vecu)\in \{\emptyset\}\times D_{2p}(\alpha) \\
	(d_0;\vecv)\in \{\emptyset\}\times D_{2q}(\beta)}}
	\left\{ W(\vecu)\shuffle W(\vecv) \right\}. \nonumber
\end{align}

Secondly, by induction hypothesis, we calculate the third term of the right hand side of \eqref{eqn:g(omega,shuffle_*)=3-term} as below:
\begin{align*}
&\pic(\lrflex{\omega_1}{\omega_2})g(\ulflex{\omega_1}{\omega_2},\alpha' \shuffle_* \beta)
	+\pic(\lrflex{\omega_1}{\omega_{r+1}})g(\ulflex{\omega_1}{\omega_{r+1}},\alpha \shuffle_* \beta')  \\
& -\pic(\lrflex{\omega_1}{(\omega_2,\omega_{r+1})})
	g(\ulflex{\omega_1}{(\omega_2,\omega_{r+1})},\alpha' \shuffle_* \beta') \\
&=\pic(\lrflex{\omega_1}{\omega_2})
	\sum_{p,q\geq0}\sum_{\substack{
		(b_0;\vecu)\in E_{2p+1}(\alpha') \\
		(d_0;\vecv)\in E_{2q+1}(\beta)}}
	\pic(\lrflex{(\ulflex{\omega_1}{\omega_2})}{(b_0d_0)})
	\ \bigl(\ulflex{(\ulflex{\omega_1}{\omega_2})}{(b_0d_0)}\bigr) \left\{ W(\vecu)\shuffle W(\vecv) \right\} \\
&\quad +\pic(\lrflex{\omega_1}{\omega_{r+1}})
	\sum_{p,q\geq0}\sum_{\substack{
		(b_0;\vecu)\in E_{2p+1}(\alpha) \\
		(d_0;\vecv)\in E_{2q+1}(\beta')}}
	\pic(\lrflex{(\ulflex{\omega_1}{\omega_{r+1}})}{(b_0d_0)}) \\
&\hspace{6.5cm} \cdot \bigl(\ulflex{(\ulflex{\omega_1}{\omega_{r+1}})}{(b_0d_0)}\bigr) \left\{ W(\vecu)\shuffle W(\vecv) \right\} \\
&\quad -\pic(\lrflex{\omega_1}{(\omega_2,\omega_{r+1})})
	\sum_{p,q\geq0}\sum_{\substack{
		(b_0;\vecu)\in E_{2p+1}(\alpha') \\
		(d_0;\vecv)\in E_{2q+1}(\beta')}}
	\pic(\lrflex{(\ulflex{\omega_1}{(\omega_2,\omega_{r+1})})}{(b_0d_0)}) \\
&\hspace{6cm} \cdot \bigl(\ulflex{(\ulflex{\omega_1}{(\omega_2,\omega_{r+1})})}{(b_0d_0)}\bigr) \left\{ W(\vecu)\shuffle W(\vecv) \right\}.
\intertext{By using \eqref{eqn:pic relation-1} and \eqref{eqn:pic relation-2} and using the bijection \eqref{eqn:bijection from E to D}, we get}
&=\sum_{p,q\geq0}\sum_{\substack{
		(b_0;\vecu)\in D_{2p+1}(\alpha) \\
		(d_0;\vecv)\in E_{2q+1}(\beta)}}
	\pic(\lrflex{\omega_1}{(b_0d_0)})
	\ \bigl(\ulflex{\omega_1}{(b_0d_0)}\bigr) \left\{ W(\vecu)\shuffle W(\vecv) \right\} \\
&\quad +\sum_{p,q\geq0}\sum_{\substack{
		(b_0;\vecu)\in E_{2p+1}(\alpha) \\
		(d_0;\vecv)\in D_{2q+1}(\beta)}}
	\pic(\lrflex{\omega_1}{(b_0d_0)})
	\ \bigl(\ulflex{\omega_1}{(b_0d_0)}\bigr) \left\{ W(\vecu)\shuffle W(\vecv) \right\} \\
&\quad\quad -\sum_{p,q\geq0}\sum_{\substack{
		(b_0;\vecu)\in D_{2p+1}(\alpha) \\
		(d_0;\vecv)\in D_{2q+1}(\beta)}}
	\pic(\lrflex{\omega_1}{(b_0d_0)})
	\ \bigl(\ulflex{\omega_1}{(b_0d_0)}\bigr) \left\{ W(\vecu)\shuffle W(\vecv) \right\}.
\end{align*}
Hence, by using the partition \eqref{eqn:partition of E} and by using $E_1(\alpha)=D_1(\alpha)$, we obtain
\begin{align}\label{eqn:third term formula of g}
&\pic(\lrflex{\omega_1}{\omega_2})g(\ulflex{\omega_1}{\omega_2},\alpha' \shuffle_* \beta)
	+\pic(\lrflex{\omega_1}{\omega_{r+1}})g(\ulflex{\omega_1}{\omega_{r+1}},\alpha \shuffle_* \beta')  \\
& -\pic(\lrflex{\omega_1}{(\omega_2,\omega_{r+1})})
	g(\ulflex{\omega_1}{(\omega_2,\omega_{r+1})},\alpha' \shuffle_* \beta') \nonumber\\
&=\sum_{p,q\geq0}\sum_{\substack{
	(b_0;\vecu)\in D_{2p+1}(\alpha) \\
	(d_0;\vecv)\in E_{2q+1}(\beta)}}
\pic(\lrflex{\omega_1}{(b_0d_0)})
\ \bigl(\ulflex{\omega_1}{(b_0d_0)}\bigr) \left\{ W(\vecu)\shuffle W(\vecv) \right\} \nonumber\\
&\quad +\sum_{\substack{
	p\geq1 \\
	q\geq0}}\sum_{\substack{
	(b_0;\vecu)\in \{\emptyset\}\times D_{2p}(\alpha) \\
	(d_0;\vecv)\in D_{2q+1}(\beta)}}
\pic(\lrflex{\omega_1}{(b_0d_0)})
\ \bigl(\ulflex{\omega_1}{(b_0d_0)}\bigr) \left\{ W(\vecu)\shuffle W(\vecv) \right\}. \nonumber
\end{align}
Therefore, by taking the sum of \eqref{eqn:first+second terms formula of g} and \eqref{eqn:third term formula of g} and by using the partition \eqref{eqn:partition of E} and by using $E_1(\alpha)=D_1(\alpha)$, we have
\begin{align*}
g(\omega_1,\alpha \shuffle_* \beta) 
&=\sum_{p,q\geq0}\sum_{\substack{
		(b_0;\vecu)\in D_{2p+1}(\alpha) \\
		(d_0;\vecv)\in E_{2q+1}(\beta)}}
	\pic(\lrflex{\omega_1}{(b_0d_0)})
	\ \bigl(\ulflex{\omega_1}{(b_0d_0)}\bigr) \left\{ W(\vecu)\shuffle W(\vecv) \right\} \\
&\quad +\sum_{\substack{
	p\geq1 \\
	q\geq0}}\sum_{\substack{
	(b_0;\vecu)\in \{\emptyset\}\times D_{2p}(\alpha) \\
	(d_0;\vecv)\in E_{2q+1}(\beta)}}
\pic(\lrflex{\omega_1}{(b_0d_0)})
\ \bigl(\ulflex{\omega_1}{(b_0d_0)}\bigr) \left\{ W(\vecu)\shuffle W(\vecv) \right\} \\
&=\sum_{p,q\geq0}\sum_{\substack{
		(b_0;\vecu)\in E_{2p+1}(\alpha) \\
		(d_0;\vecv)\in E_{2q+1}(\beta)}}
	\pic(\lrflex{\omega_1}{(b_0d_0)})
	\ \bigl(\ulflex{\omega_1}{(b_0d_0)}\bigr) \left\{ W(\vecu)\shuffle W(\vecv) \right\}.
\end{align*}
Hence, we finish the proof.
\end{proof}

\begin{prop}\label{prop:shuffle of map g}
For $r,s\geq1$ and for $\alpha:=(\omega_1,\dots,\omega_r)$, $\beta:=(\omega_{r+1},\dots,\omega_{r+s})$, we have
\begin{equation}\label{eqn:shuffle of map g}
g(\alpha \shuffle_* \beta)=g(\alpha)\shuffle g(\beta).
\end{equation}
\end{prop}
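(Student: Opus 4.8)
The plan is to reduce the identity to Lemma~\ref{lem:lemma on shuffle of map g} and Lemma~\ref{lem:equivalent equation of g}, expand both sides into explicit sums indexed by the decomposition sets $D_t$ and $E_t$ of \S\ref{subsec:Algebraic preparation}, and then match the two expansions term by term. Throughout I would use that Lemma~\ref{lem:lemma on shuffle of map g} and its proof hold verbatim for an arbitrary leading letter $\ell\in V_\Z$ and arbitrary words $P,Q\in V_\Z^\bullet$ with pairwise distinct $v$-indices, not merely for the particular indexing in which it is stated (recall $g=g_{\pic}$ and $W(\vecu)=W_{\pic}(\vecu)$).

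First I would write $\alpha=(\omega_1,\alpha')$ and $\beta=(\omega_{r+1},\beta')$, where $\alpha',\beta'$ denote the tails, and expand $\alpha\shuffle_*\beta$ by the $\pic$-form \eqref{eqn:shufflestar product-pic ver.} of the $\shuffle_*$-recursion:
\begin{align*}
\alpha\shuffle_*\beta
&=(\omega_1)(\alpha'\shuffle_*\beta)+(\omega_{r+1})(\alpha\shuffle_*\beta') \\
&\quad -\pic(\lrflex{\omega_1}{\omega_{r+1}})\,(\ulflex{\omega_1}{\omega_{r+1}},\alpha'\shuffle_*\beta')
-\pic(\lrflex{\omega_{r+1}}{\omega_1})\,(\ulflex{\omega_{r+1}}{\omega_1},\alpha'\shuffle_*\beta').
\end{align*}
Applying $g$ and then Lemma~\ref{lem:lemma on shuffle of map g} to each of the four summands (with leading letter $\omega_1$, $\omega_{r+1}$, $\ulflex{\omega_1}{\omega_{r+1}}$ and $\ulflex{\omega_{r+1}}{\omega_1}$ respectively) presents $g(\alpha\shuffle_*\beta)$ as an explicit combination of sums of the shape $\sum\pic(\lrflex{\ell}{b_0d_0})(\ulflex{\ell}{b_0d_0})\{W(\vecu)\shuffle W(\vecv)\}$, where $\ell$ runs over those four letters and the decompositions $(b_0;\vecu)$, $(d_0;\vecv)$ range over the relevant sets $E_{2p+1}$, $E_{2q+1}$. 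Independently, by Lemma~\ref{lem:equivalent equation of g},
\begin{align*}
g(\alpha)&=\sum_{p\geq0}\sum_{(b_0;\vecu)\in E_{2p+1}(\alpha')}\pic(\lrflex{\omega_1}{b_0})(\ulflex{\omega_1}{b_0})W(\vecu), \\
g(\beta)&=\sum_{q\geq0}\sum_{(d_0;\vecv)\in E_{2q+1}(\beta')}\pic(\lrflex{\omega_{r+1}}{d_0})(\ulflex{\omega_{r+1}}{d_0})W(\vecv),
\end{align*}
and, after pulling the leading letters $\ulflex{\omega_1}{b_0}$ and $\ulflex{\omega_{r+1}}{d_0}$ out front by the defining recursion \eqref{eqn:shuffle product} of $\shuffle$, the product $g(\alpha)\shuffle g(\beta)$ becomes once more a sum of terms of that same shape.

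The final, and main, step is to check that the two expansions agree. Here I would split every sum according to whether the prefix pieces $b_0$ and $d_0$ are empty, using the partition \eqref{eqn:partition of E}, namely $E_t(\omega)=(\{\emptyset\}\times D_{t-1}(\omega))\sqcup D_t(\omega)$, and the bijection \eqref{eqn:bijection from E to D} to pass between decompositions of $\beta'$ and of $\beta=(\omega_{r+1},\beta')$ (and likewise for $\alpha$). The two ``contraction'' summands produced by the last two terms of \eqref{eqn:shufflestar product-pic ver.} get absorbed exactly by the cross terms in which a prefix of $\alpha$ and a prefix of $\beta$ have been merged; this uses the multiplicativity \eqref{eqn:pic,decomposition} of $\pic$, the skew-symmetry \eqref{eqn:pic,lowerflex}, the flexion identities \eqref{eqn:pic relation-1}--\eqref{eqn:pic relation-2} and the three-term relation of Remark~\ref{rem:shufflestar product-pic ver.}(b), together with $\ulflex{(u\omega)}{\eta}=(u)\ulflex{\omega}{\eta}$ — precisely the manipulations already carried out in the proof of Lemma~\ref{lem:lemma on shuffle of map g}. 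Equivalently, the whole comparison can be set up as an induction on $r+s$ parallel to that proof, applying the inductive hypothesis to the strictly shorter products $\alpha'\shuffle_*\beta$, $\alpha\shuffle_*\beta'$ and $\alpha'\shuffle_*\beta'$.

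I expect this reorganization of the double sums over the decomposition sets — tracking which prefix pieces vanish and absorbing the contraction terms against the first-level contraction terms of $g$ — to be the only real difficulty; no idea beyond Lemmas~\ref{lem:lemma on shuffle of map g} and~\ref{lem:equivalent equation of g} and the $\pic$-identities of Remark~\ref{rem:shufflestar product-pic ver.}(b) is needed.
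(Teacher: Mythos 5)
Your plan is correct and is essentially the paper's own proof: expand $\alpha\shuffle_*\beta$ by the $\pic$-form recursion \eqref{eqn:shufflestar product-pic ver.}, apply Lemma \ref{lem:lemma on shuffle of map g} to the four summands (with the flexed leading letters, as the paper also implicitly does), absorb the two contraction terms via \eqref{eqn:pic relation-1}--\eqref{eqn:pic relation-2}, the bijection \eqref{eqn:bijection from E to D} and the partition \eqref{eqn:partition of E}, and then recognize the resulting double sum as $g(\alpha)\shuffle g(\beta)$ through Lemma \ref{lem:equivalent equation of g} and the shuffle recursion \eqref{eqn:shuffle product}. The only differences are presentational (you match two expansions rather than simplifying the left-hand side in two paired blocks, and the extra induction on $r+s$ you mention is not needed since the induction already lives inside Lemma \ref{lem:lemma on shuffle of map g}).
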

\begin{proof}
By the definition \eqref{eqn:shufflestar product-pic ver.} of the product $\shuffle_*$, we have
\begin{align*}
g(\alpha \shuffle_* \beta)
&=g(\omega_1,\alpha' \shuffle_* \beta)
+g(\omega_{r+1},\alpha \shuffle_* \beta') \\
&\quad -\pic(\lrflex{\omega_1}{\omega_{r+1}})g(\ulflex{\omega_1}{\omega_{r+1}},\alpha' \shuffle_* \beta')
-\pic(\lrflex{\omega_{r+1}}{\omega_1})g(\ulflex{\omega_{r+1}}{\omega_1},\alpha' \shuffle_* \beta')
\end{align*}
By using Lemma \ref{lem:lemma on shuffle of map g}, we have
\begin{align*}
&g(\omega_1,\alpha' \shuffle_* \beta)
-\pic(\lrflex{\omega_1}{\omega_{r+1}})g(\ulflex{\omega_1}{\omega_{r+1}},\alpha' \shuffle_* \beta') \\
&=\sum_{p,q\geq0}\sum_{\substack{
	(b_0;\vecu)\in E_{2p+1}(\alpha') \\
	(d_0;\vecv)\in E_{2q+1}(\beta)}}
\pic(\lrflex{\omega_1}{(b_0d_0)})
\ \bigl(\ulflex{\omega_1}{(b_0d_0)}\bigr) \left\{ W(\vecu)\shuffle W(\vecv) \right\} \\
&\quad -\pic(\lrflex{\omega_1}{\omega_{r+1}})
\sum_{p,q\geq0}\sum_{\substack{
	(b_0;\vecu)\in E_{2p+1}(\alpha') \\
	(d_0;\vecv)\in E_{2q+1}(\beta')}}
\pic(\lrflex{(\ulflex{\omega_1}{\omega_{r+1}})}{(b_0d_0)})
\ \bigl(\ulflex{(\ulflex{\omega_1}{\omega_{r+1}})}{(b_0d_0)}\bigr) \left\{ W(\vecu)\shuffle W(\vecv) \right\} \\
&=\sum_{p,q\geq0}\sum_{\substack{
	(b_0;\vecu)\in E_{2p+1}(\alpha') \\
	(d_0;\vecv)\in E_{2q+1}(\beta)}}
\pic(\lrflex{\omega_1}{(b_0d_0)})
\ \bigl(\ulflex{\omega_1}{(b_0d_0)}\bigr) \left\{ W(\vecu)\shuffle W(\vecv) \right\} \\
&\quad -\sum_{p,q\geq0}\sum_{\substack{
	(b_0;\vecu)\in E_{2p+1}(\alpha') \\
	(d_0;\vecv)\in E_{2q+1}(\beta')}}
\pic(\lrflex{\omega_1}{(b_0(\omega_{r+1}d_0))})
\ \bigl(\ulflex{\omega_1}{(b_0(\omega_{r+1}d_0))}\bigr) \left\{ W(\vecu)\shuffle W(\vecv) \right\} \\
&=\sum_{p,q\geq0}\sum_{\substack{
	(b_0;\vecu)\in E_{2p+1}(\alpha') \\
	(d_0;\vecv)\in E_{2q+1}(\beta)}}
\pic(\lrflex{\omega_1}{(b_0d_0)})
\ \bigl(\ulflex{\omega_1}{(b_0d_0)}\bigr) \left\{ W(\vecu)\shuffle W(\vecv) \right\} \\
&\quad -\sum_{p,q\geq0}\sum_{\substack{
	(b_0;\vecu)\in E_{2p+1}(\alpha') \\
	(d_0;\vecv)\in D_{2q+1}(\beta)}}
\pic(\lrflex{\omega_1}{(b_0d_0)})
\ \bigl(\ulflex{\omega_1}{(b_0d_0)}\bigr) \left\{ W(\vecu)\shuffle W(\vecv) \right\}.
\intertext{By using the partition \eqref{eqn:partition of E} for $t=2q+1$ and by using $E_1(\beta)=D_1(\beta)$, we get}
&=\sum_{\substack{
	p\geq0 \\
	q\geq1}}\sum_{\substack{
	(b_0;\vecu)\in E_{2p+1}(\alpha') \\
	(d_0;\vecv)\in \{\emptyset\}\times D_{2q}(\beta)}}
\pic(\lrflex{\omega_1}{(b_0d_0)})
\ \bigl(\ulflex{\omega_1}{(b_0d_0)}\bigr) \left\{ W(\vecu)\shuffle W(\vecv) \right\} \\
&=\sum_{\substack{
	p\geq0 \\
	q\geq1}}\sum_{\substack{
	(b_0;\vecu)\in E_{2p+1}(\alpha') \\
	\vecv \in D_{2q}(\beta)}}
\pic(\lrflex{\omega_1}{b_0})
\ \bigl(\ulflex{\omega_1}{b_0}\bigr) \left\{ W(\vecu)\shuffle W(\vecv) \right\}.
\end{align*}
By using Lemma \ref{lem:equivalent equation of g} (change the variables $\vecv_r$ to $\beta$), we obtain
\begin{align}\label{eqn:first formula of g}
&g(\omega_1,\alpha' \shuffle_* \beta)
-\pic(\lrflex{\omega_1}{\omega_{r+1}})g(\ulflex{\omega_1}{\omega_{r+1}},\alpha' \shuffle_* \beta') \\
&=\sum_{p,q\geq0}\sum_{\substack{
	(b_0;\vecu)\in E_{2p+1}(\alpha') \\
	(d_0;\vecv) \in E_{2q+1}(\beta')}}
\pic(\lrflex{\omega_1}{b_0})\pic(\lrflex{\omega_{r+1}}{d_0})
\ \bigl(\ulflex{\omega_1}{b_0}\bigr) \left\{ W(\vecu)\shuffle \bigl(\ulflex{\omega_{r+1}}{d_0}\bigr)W(\vecv) \right\}. \nonumber
\end{align}
Similarly, we obtain
\begin{align}\label{eqn:second formula of g}
&g(\omega_{r+1},\alpha \shuffle_* \beta')
-\pic(\lrflex{\omega_{r+1}}{\omega_1})g(\ulflex{\omega_{r+1}}{\omega_1},\alpha' \shuffle_* \beta') \\
&=\sum_{p,q\geq0}\sum_{\substack{
	(b_0;\vecu)\in E_{2p+1}(\alpha') \\
	(d_0;\vecv) \in E_{2q+1}(\beta')}}
\pic(\lrflex{\omega_1}{b_0})\pic(\lrflex{\omega_{r+1}}{d_0})
\ \bigl(\ulflex{\omega_{r+1}}{d_0}\bigr) \left\{ \bigl(\ulflex{\omega_1}{b_0}\bigr)W(\vecu)\shuffle W(\vecv) \right\}. \nonumber
\end{align}
Therefore, by taking the sum of \eqref{eqn:first formula of g} and \eqref{eqn:second formula of g} and by using the definition \eqref{eqn:shuffle product} of the product $\shuffle$, we get
\begin{align*}
&g(\alpha \shuffle_* \beta) \\
&=\sum_{p,q\geq0}\sum_{\substack{
	(b_0;\vecu)\in E_{2p+1}(\alpha') \\
	(d_0;\vecv) \in E_{2q+1}(\beta')}}
\pic(\lrflex{\omega_1}{b_0})\pic(\lrflex{\omega_{r+1}}{d_0})
\ \left\{ \bigl(\ulflex{\omega_1}{b_0}\bigr)W(\vecu)\shuffle \bigl(\ulflex{\omega_{r+1}}{d_0}\bigr)W(\vecv) \right\} \\
&=\left\{ \sum_{p\geq0}\sum_{(b_0;\vecu)\in E_{2p+1}(\alpha')}
\pic(\lrflex{\omega_1}{b_0})\ \bigl(\ulflex{\omega_1}{b_0}\bigr)W(\vecu) \right\} \\
&\hspace{4.5cm} \shuffle 
\left\{ \sum_{q\geq0}\sum_{(d_0;\vecv) \in E_{2q+1}(\beta')}
\pic(\lrflex{\omega_{r+1}}{d_0})\ \bigl(\ulflex{\omega_{r+1}}{d_0}\bigr)W(\vecv) \right\}.
\intertext{By using Lemma \ref{lem:equivalent equation of g}, we obtain}
&=g(\alpha) \shuffle g(\beta).
\end{align*}
Hence, we obtain the claim.
\end{proof}

By the above theorem, we immediately obtain the following corollary.
\begin{cor}\label{cor:sh*(ganit(pic))(M)=sigma sh(M)}
For $M\in \overline{\BIMU}(\Gamma)$ and for $r,s\geq1$ and for $\alpha:=(\omega_1,\dots,\omega_r)$, $\beta:=(\omega_{r+1},\dots,\omega_{r+s})$, we have
\begin{align}\label{eqn:sh*(ganit(pic))(M)=sigma sh(M)}
&\shmap_* \bigl( \ganit_v(\pic)(M) \bigr)(\alpha;\beta) \\
&=\sum_{p,q\geq1}\sum_{\substack{
	(a_1;b_1;\cdots;a_p;b_p)\in D_{2p}(\alpha) \\
	(c_1;d_1;\cdots;c_q;d_q)\in D_{2q}(\beta)}}
\pic(\lrflex{a_1}{b_1})\cdots\pic(\lrflex{a_p}{b_p})\pic(\lrflex{c_1}{d_1})\cdots\pic(\lrflex{c_q}{d_q}) \nonumber\\
&\hspace{5cm} \cdot \shmap(M)\bigl(\ulflex{a_1}{b_1}\cdots \ulflex{a_p}{b_p};\ulflex{c_1}{d_1}\cdots \ulflex{c_q}{d_q}\bigr). \nonumber
\end{align}
\end{cor}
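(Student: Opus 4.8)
The plan is to unwind both sides by linearity and then invoke Proposition~\ref{prop:shuffle of map g}. The starting point is the elementary identity, immediate upon comparing Definition~\ref{def:ganit} with Definition~\ref{def:definition of map g}, that for every word $\gamma\in V_\Z^\bullet$ of length $k$ one has
$$
\ganit_v(\pic)(M)(\gamma)=M\bigl(g(\gamma)\bigr),
$$
where $M$ is extended $\mathcal K$-linearly to $\mathcal A_V$ and $g(\gamma)$ denotes the image of $g(\vecv_k)$ under the substitution of the letters of $\vecv_k$ by those of $\gamma$, as in the Remark following Definition~\ref{def:definition of map g}. Indeed the only difference between the two defining formulas is whether the word $\ulflex{a_1}{b_1}\cdots\ulflex{a_s}{b_s}$ is fed into $M$ or kept as a formal word.

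Using this together with the definition of the map $\shmap_*$ and the $\mathcal K$-linearity of $M$, I would then compute
$$
\shmap_*\bigl(\ganit_v(\pic)(M)\bigr)(\alpha;\beta)
=\sum_{\gamma\in V_\Z^\bullet}\Shstar{\alpha}{\beta}{\gamma}\,M\bigl(g(\gamma)\bigr)
=M\bigl(g(\alpha\shuffle_*\beta)\bigr),
$$
the last step being the $\mathcal K$-linear extension of $g$ applied to $\alpha\shuffle_*\beta=\sum_{\gamma}\Shstar{\alpha}{\beta}{\gamma}\gamma$. By Proposition~\ref{prop:shuffle of map g} this equals $M\bigl(g(\alpha)\shuffle g(\beta)\bigr)$. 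It then remains to substitute the explicit formula \eqref{eqn:definition of map g, by using D} for $g(\alpha)$ and $g(\beta)$ with $B=\pic$, expand the shuffle by bilinearity, and apply $M$ termwise: the scalar $\pic(\lrflex{a_1}{b_1})\cdots\pic(\lrflex{a_p}{b_p})\pic(\lrflex{c_1}{d_1})\cdots\pic(\lrflex{c_q}{d_q})$ factors out of each term, and what is left is
$$
M\Bigl(\bigl(\ulflex{a_1}{b_1}\cdots\ulflex{a_p}{b_p}\bigr)\shuffle\bigl(\ulflex{c_1}{d_1}\cdots\ulflex{c_q}{d_q}\bigr)\Bigr)
=\shmap(M)\bigl(\ulflex{a_1}{b_1}\cdots\ulflex{a_p}{b_p};\ulflex{c_1}{d_1}\cdots\ulflex{c_q}{d_q}\bigr)
$$
by the very definition of $\shmap$. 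Collecting terms gives the asserted formula.

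The only point requiring any care is the bookkeeping around the linear extensions: one must check that $g$ is well defined as a $\mathcal K$-linear map on $\mathcal A_V$ compatibly with relabelling of letters, and that $M$ may be applied termwise to such $\mathcal K$-linear combinations, so that the interchanges above (pulling $M$ through $\sum_{\gamma}\Shstar{\alpha}{\beta}{\gamma}(-)$ and through the shuffle expansion) are legitimate. All sums occurring are finite, the lengths involved being bounded by $r+s$, so no convergence issue arises; everything else is formal rewriting, which is why the statement follows at once from Proposition~\ref{prop:shuffle of map g}.
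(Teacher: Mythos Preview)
Your proof is correct and follows exactly the approach the paper intends: the corollary is stated right after Proposition~\ref{prop:shuffle of map g} with only the remark ``By the above theorem, we immediately obtain the following corollary,'' and your argument spells out precisely that deduction---rewriting $\ganit_v(\pic)(M)(\gamma)$ as $M(g(\gamma))$, pulling $M$ through the $\shuffle_*$-sum to reach $M(g(\alpha\shuffle_*\beta))=M(g(\alpha)\shuffle g(\beta))$, and then expanding via \eqref{eqn:definition of map g, by using D} and the definition of $\shmap$.
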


\begin{thm}[{cf. \cite[Proposition 6.2]{SS}, \cite[Lemma 4.4.2]{S-ARIGARI}}]\label{thm: ganit(B)(ARIal) subset ARIil}
The following two hold:
\begin{description}
	\item[(i)] $\ganit_v(\pic)(\overline{\ARI}(\Gamma)_\al) \subset \overline{\ARI}(\Gamma)_\il$.
	\item[(ii)] $\ganit_v(\pic)(\overline{\GARI}(\Gamma)_\as) \subset \overline{\GARI}(\Gamma)_\is$.
\end{description}
\end{thm}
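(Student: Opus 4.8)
The plan is to reduce both inclusions to the dimould characterizations of the four symmetry types established earlier. Recall from Proposition \ref{prop:gp-like, Lie-like} and Remark \ref{rem:gp-like, Lie-like of il, is} that a mould $N\in\overline{\BIMU}(\Gamma)$ is alternal (resp.\ symmetral) iff $\shmap(N)=N\otimes I+I\otimes N$ (resp.\ $\shmap(N)=N\otimes N$), and is alternil (resp.\ symmetril) iff $\shmap_*(N)=N\otimes I+I\otimes N$ (resp.\ $\shmap_*(N)=N\otimes N$). Thus, writing $N:=\ganit_v(\pic)(M)$ in case (i) and $N:=\ganit_v(\pic)(S)$ in case (ii), the task is to evaluate $\shmap_*(N)(\alpha;\beta)$ and to identify it with the corresponding product of dimoulds. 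For $\alpha=\emptyset$ or $\beta=\emptyset$ this reduces, exactly as in the proof of Proposition \ref{prop:gp-like, Lie-like}, to the single observation that $\ganit_v(\pic)$ preserves the empty-word component (by Definition \ref{def:ganit}, $N(\emptyset)=M(\emptyset)=0$ in case (i) and $N(\emptyset)=S(\emptyset)=1$ in case (ii)). Hence it suffices to treat $\alpha,\beta\neq\emptyset$, where Corollary \ref{cor:sh*(ganit(pic))(M)=sigma sh(M)} already supplies a closed expression for $\shmap_*(N)(\alpha;\beta)$ in terms of $\shmap(M)$ (resp.\ $\shmap(S)$).

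For (i), take $M\in\overline{\ARI}(\Gamma)_\al$. In the formula of Corollary \ref{cor:sh*(ganit(pic))(M)=sigma sh(M)}, every summand contains the factor $\shmap(M)\bigl(\ulflex{a_1}{b_1}\cdots\ulflex{a_p}{b_p};\ulflex{c_1}{d_1}\cdots\ulflex{c_q}{d_q}\bigr)$ in which both entries are nonempty: in a decomposition belonging to $D_{2p}(\alpha)$ one has $a_1,\dots,a_p\neq\emptyset$, so the left entry has length $\geq1$, and similarly for the right entry via $D_{2q}(\beta)$. Since alternality of $M$ means $\shmap(M)=M\otimes I+I\otimes M$, which vanishes on pairs $(\omega;\eta)$ with $\omega\neq\emptyset$ and $\eta\neq\emptyset$, every summand is $0$. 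Therefore $\shmap_*(N)(\alpha;\beta)=0$ for $\alpha,\beta\neq\emptyset$; combined with $N(\emptyset)=0$ this gives $\shmap_*(N)=N\otimes I+I\otimes N$, i.e.\ $N\in\overline{\ARI}(\Gamma)_\il$.

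For (ii), take $S\in\overline{\GARI}(\Gamma)_\as$, so $\shmap(S)(\omega;\eta)=S(\omega)S(\eta)$. Substituting this into Corollary \ref{cor:sh*(ganit(pic))(M)=sigma sh(M)}, each summand becomes $\pic(\lrflex{a_1}{b_1})\cdots\pic(\lrflex{a_p}{b_p})\,S(\ulflex{a_1}{b_1}\cdots\ulflex{a_p}{b_p})$ times $\pic(\lrflex{c_1}{d_1})\cdots\pic(\lrflex{c_q}{d_q})\,S(\ulflex{c_1}{d_1}\cdots\ulflex{c_q}{d_q})$, which separates the $\alpha$-variables from the $\beta$-variables. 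Hence the double sum over $D_{2p}(\alpha)\times D_{2q}(\beta)$ factors as a product of two single sums, and by Definition \ref{def:ganit} (rewritten in terms of the decomposition sets $D_{2p}$) the first factor equals $\ganit_v(\pic)(S)(\alpha)=N(\alpha)$ and the second equals $\ganit_v(\pic)(S)(\beta)=N(\beta)$. So $\shmap_*(N)(\alpha;\beta)=N(\alpha)N(\beta)$ for $\alpha,\beta\neq\emptyset$, which together with $N(\emptyset)=1$ yields $\shmap_*(N)=N\otimes N$, i.e.\ $N\in\overline{\GARI}(\Gamma)_\is$.

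I do not expect a genuine obstacle at this stage: all the combinatorial difficulty has already been absorbed into Proposition \ref{prop:shuffle of map g} and Corollary \ref{cor:sh*(ganit(pic))(M)=sigma sh(M)}, whose proofs rely on the recurrence of Proposition \ref{prop:recurrence formula of map g} and on the flexion identities \eqref{eqn:pic,upperflex}--\eqref{eqn:pic,decomposition} for $\pic$. The only points needing attention here are the bookkeeping of the empty-word components and, in case (ii), checking that the factorization of the double sum reproduces Definition \ref{def:ganit} exactly---in particular that the trailing blocks $b_p$ and $d_q$ are permitted to be empty, matching the constraint ``$b_1,\dots,b_{s-1}\neq\emptyset$'' there.
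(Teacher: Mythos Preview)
Your proposal is correct and follows essentially the same route as the paper: both arguments invoke Corollary \ref{cor:sh*(ganit(pic))(M)=sigma sh(M)} for $\alpha,\beta\neq\emptyset$, observe that the two entries $\ulflex{a_1}{b_1}\cdots\ulflex{a_p}{b_p}$ and $\ulflex{c_1}{d_1}\cdots\ulflex{c_q}{d_q}$ are nonempty, and then use the dimould characterizations to conclude (vanishing in case (i), factorization into $\ganit_v(\pic)(S)(\alpha)\cdot\ganit_v(\pic)(S)(\beta)$ in case (ii)). Your remarks on the empty-word components and on the constraint that only $b_1,\dots,b_{s-1}$ (not $b_s$) must be nonempty are exactly the minor bookkeeping points the paper handles as well.
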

\begin{proof}
Let $r,s\geq1$ and $\alpha:=(\omega_1,\dots,\omega_r)$, $\beta:=(\omega_{r+1},\dots,\omega_{r+s})$.
Note that we have
$$
\ulflex{a_1}{b_1}\cdots \ulflex{a_p}{b_p} \neq \emptyset
\quad (\mbox{resp.\quad } \ulflex{c_1}{d_1}\cdots \ulflex{c_q}{d_q} \neq \emptyset)
$$
for $(a_1;b_1;\cdots;a_p;b_p)\in D_{2p}(\alpha)$ (resp. $(c_1;d_1;\cdots;c_q;d_q)\in D_{2q}(\beta)$). \\
(i). Let $A\in\overline{\ARI}(\Gamma)_\al$.
By the definition of the alternal mould, we have $\shmap(A)(\omega;\eta)=0$ for $\omega,\eta\in V_\Z^\bullet\setminus\{\emptyset\}$.
Because by Corollary \ref{cor:sh*(ganit(pic))(M)=sigma sh(M)} we have
$$
\shmap_* \bigl( \ganit_v(\pic)(A) \bigr)(\alpha;\beta)=0,
$$
we get
$$
\shmap_* \bigl( \ganit_v(\pic)(A) \bigr)
= \ganit_v(\pic)(A) \otimes I + I \otimes \ganit_v(\pic)(A),
$$
that is, by Remark \ref{rem:gp-like, Lie-like of il, is}.(i), we see that $\ganit_v(\pic)(A)$ is alternil.
Hence, we obtain $\ganit_v(\pic)(\overline{\ARI}(\Gamma)_\al) \subset \overline{\ARI}(\Gamma)_\il$. \\
(ii). Let $S\in\overline{\GARI}(\Gamma)_\as$.
By Proposition \ref{prop:gp-like, Lie-like}.(ii), we have $\shmap(S)=S\otimes S$.
So by using Corollary \ref{cor:sh*(ganit(pic))(M)=sigma sh(M)}, we get
\begin{align*}
&\shmap_* \bigl( \ganit_v(\pic)(S) \bigr)(\alpha;\beta) \\
&=\sum_{p,q\geq1}\sum_{\substack{
	(a_1;b_1;\cdots;a_p;b_p)\in D_{2p}(\alpha) \\
	(c_1;d_1;\cdots;c_q;d_q)\in D_{2q}(\beta)}}
\pic(\lrflex{a_1}{b_1})\cdots\pic(\lrflex{a_p}{b_p})\pic(\lrflex{c_1}{d_1})\cdots\pic(\lrflex{c_q}{d_q}) \\
&\hspace{5cm} \cdot S\bigl(\ulflex{a_1}{b_1}\cdots \ulflex{a_p}{b_p}\bigr) S\bigl(\ulflex{c_1}{d_1}\cdots \ulflex{c_q}{d_q}\bigr) \\
&=\left\{
\sum_{p\geq1}\sum_{(a_1;b_1;\cdots;a_p;b_p)\in D_{2p}(\alpha)}
\pic(\lrflex{a_1}{b_1})\cdots\pic(\lrflex{a_p}{b_p})
S\bigl(\ulflex{a_1}{b_1}\cdots \ulflex{a_p}{b_p}\bigr)
\right\} \\
&\qquad \cdot \left\{
\sum_{q\geq1}\sum_{(c_1;d_1;\cdots;c_q;d_q)\in D_{2q}(\beta)}
\pic(\lrflex{c_1}{d_1})\cdots\pic(\lrflex{c_q}{d_q})
S\bigl(\ulflex{c_1}{d_1}\cdots \ulflex{c_q}{d_q}\bigr)
\right\} \\
&= \bigl( \ganit_v(\pic)(S) \bigr)(\alpha) \bigl( \ganit_v(\pic)(S) \bigr)(\beta) \\
&= \bigl( \ganit_v(\pic)(S) \otimes \ganit_v(\pic)(S) \bigr)(\alpha;\beta).
\end{align*}
Hence, we obtain $\shmap_* \bigl( \ganit_v(\pic)(S) \bigr)=\ganit_v(\pic)(S) \otimes \ganit_v(\pic)(S)$, that is, we see that $\ganit_v(\pic)(S)$ is symmetril.
Hence, we obtain $\ganit_v(\pic)(\overline{\GARI}(\Gamma)_\as)\in\overline{\GARI}(\Gamma)_\is$.
\end{proof}

\begin{remark}\label{rem: inverse map of ganit(pic)}
As an analogue of Proposition \ref{prop:recurrence formula of map g}, we have
\begin{align}\label{eqn:recurrence formula of map gpoc}
g_{\poc}(\vecv_r)
&=g_{\poc}(\vecv_{r-1})(\omega_r)
+\poc(\lrflex{\omega_{r-1}}{\omega_r})g_{\poc}(\vecv_{r-2},\ulflex{\omega_{r-1}}{\omega_r}),
\end{align}
for $r\geq2$. Here, $\poc$ is defined in \eqref{eqn:def of poc}.
By using this equation \eqref{eqn:recurrence formula of map gpoc}, we get an analogue of Proposition \ref{prop:shuffle of map g}, that is, we have
\begin{equation}\label{eqn:shuffle* of map gpoc}
g_{\poc}(\alpha \shuffle \beta)=g_{\poc}(\alpha)\shuffle_* g_{\poc}(\beta)
\end{equation}
for $r,s\geq1$ and for $\alpha:=(\omega_1,\dots,\omega_r)$, $\beta:=(\omega_{r+1},\dots,\omega_{r+s})$.
By using this equation, we obtain
\begin{align}\label{eqn:sh(ganit(poc))(M)=sigma sh*(M)}
&\shmap \bigl( \ganit_v(\poc)(M) \bigr)(\alpha;\beta) \\
&=\sum_{p,q\geq1}\sum_{\substack{
	(a_1;b_1;\cdots;a_p;b_p)\in D_{2p}(\alpha) \\
	(c_1;d_1;\cdots;c_q;d_q)\in D_{2q}(\beta)}}
\poc(\lrflex{a_1}{b_1})\cdots\poc(\lrflex{a_p}{b_p})\poc(\lrflex{c_1}{d_1})\cdots\poc(\lrflex{c_q}{d_q}) \nonumber\\
&\hspace{5cm} \cdot \shmap_*(M)\bigl(\ulflex{a_1}{b_1}\cdots \ulflex{a_p}{b_p};\ulflex{c_1}{d_1}\cdots \ulflex{c_q}{d_q}\bigr), \nonumber
\end{align}
as an analogue of Corollary \ref{cor:sh*(ganit(pic))(M)=sigma sh(M)}.
It should be noted that the equation \eqref{eqn:sh(ganit(poc))(M)=sigma sh*(M)} is equivalent to
\begin{align*}
\sum_{w\in sh(u,v)}B(w)
=\sum_{I,J}
\frac{1}{\prod_{i\in I}(v_i-v_{i+1})\prod_{j\in J}(v_j-v_{j+1})}
A_{|I'|,|J'|}(v_{I'},v_{J'})
\end{align*}
denoted in \cite[the end of page 55]{S-ARIGARI}.
Hence, similarly to the proof of Theorem \ref{thm: ganit(B)(ARIal) subset ARIil}, we obtain
\begin{align}
\ganit_v(\poc) \left( \overline{\ARI}(\Gamma)_\il \right) &\subset \overline{\ARI}(\Gamma)_\al, \\
\ganit_v(\poc) \left( \overline{\GARI}(\Gamma)_\is \right) &\subset \overline{\GARI}(\Gamma)_\as.
\end{align}
\end{remark}

\begin{thm}[{cf. \cite[Proposition 6.2]{SS}, \cite[Lemma 4.4.2]{S-ARIGARI}}]\label{thm: ganit(B) is automorphism between al and il or as and is}
The map $\ganit_v(\pic)$ induces a group isomorphism from $(\overline{\GARI}(\Gamma)_\as,\times)$ to $(\overline{\GARI}(\Gamma)_\is,\times)$ and induces a Lie algebra isomorphism from $(\overline{\ARI}(\Gamma)_\al,[,])$ to $(\overline{\ARI}(\Gamma)_\il,[,])$.
\end{thm}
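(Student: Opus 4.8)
The plan is to deduce this theorem formally from three facts already established above: that $\ganit_v(B)$ is a group (resp.\ Lie algebra) automorphism of the ambient object for every $B\in\overline{\GARI}(\Gamma)$ (Theorem \ref{thm: ganit(B) is automorphism}); that $\ganit_v(\pic)$ carries the symmetral/alternal subsets into the symmetril/alternil subsets (Theorem \ref{thm: ganit(B)(ARIal) subset ARIil}); and that its inverse $\ganit_v(\poc)$ carries the symmetril/alternil subsets back into the symmetral/alternal subsets (Remark \ref{rem: inverse map of ganit(pic)}). The only additional observations needed are that a bijective morphism of groups (resp.\ Lie algebras) is an isomorphism, and that the restriction of such a morphism to a substructure whose image lands in a substructure is again a morphism of substructures.

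\textbf{The group case.} By Theorem \ref{thm: ganit(B) is automorphism}, $\ganit_v(\pic)$ is a group automorphism of $(\overline{\GARI}(\Gamma),\times)$, so its restriction to the subgroup $(\overline{\GARI}(\Gamma)_\as,\times)$ is an injective group homomorphism; by Theorem \ref{thm: ganit(B)(ARIal) subset ARIil}(ii) this restriction takes values in $\overline{\GARI}(\Gamma)_\is$, hence defines a group homomorphism
$$
\ganit_v(\pic)\colon\ \overline{\GARI}(\Gamma)_\as \longrightarrow \overline{\GARI}(\Gamma)_\is .
$$
For surjectivity, take $S\in\overline{\GARI}(\Gamma)_\is$. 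By Remark \ref{rem: inverse map of ganit(pic)} we have $\ganit_v(\poc)(S)\in\overline{\GARI}(\Gamma)_\as$, and since $\ganit_v(\poc)$ is the inverse of $\ganit_v(\pic)$ on $\overline{\GARI}(\Gamma)$ (Remark \ref{rem:ganit induce map on ARI and GARI}) we get $\ganit_v(\pic)\bigl(\ganit_v(\poc)(S)\bigr)=S$. Thus $\ganit_v(\pic)$ restricts to a bijective group homomorphism from $(\overline{\GARI}(\Gamma)_\as,\times)$ onto $(\overline{\GARI}(\Gamma)_\is,\times)$, i.e.\ a group isomorphism, whose inverse is the restriction of $\ganit_v(\poc)$.

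\textbf{The Lie algebra case.} This is entirely analogous: by Theorem \ref{thm: ganit(B) is automorphism} the map $\ganit_v(\pic)$ is a Lie algebra automorphism of $(\overline{\ARI}(\Gamma),[,])$; by Theorem \ref{thm: ganit(B)(ARIal) subset ARIil}(i) it carries the Lie subalgebra $\overline{\ARI}(\Gamma)_\al$ into the Lie subalgebra $\overline{\ARI}(\Gamma)_\il$; and by Remark \ref{rem: inverse map of ganit(pic)} its inverse $\ganit_v(\poc)$ carries $\overline{\ARI}(\Gamma)_\il$ into $\overline{\ARI}(\Gamma)_\al$. Being mutually inverse, the two restricted maps are mutually inverse Lie algebra homomorphisms between the subalgebras, hence $\ganit_v(\pic)$ restricts to a Lie algebra isomorphism from $(\overline{\ARI}(\Gamma)_\al,[,])$ onto $(\overline{\ARI}(\Gamma)_\il,[,])$.

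\textbf{Main obstacle.} Essentially all the substantive content sits in Theorem \ref{thm: ganit(B)(ARIal) subset ARIil} and in Remark \ref{rem: inverse map of ganit(pic)}, the latter requiring the recurrence \eqref{eqn:recurrence formula of map gpoc} and the shuffle compatibility \eqref{eqn:shuffle* of map gpoc} for $g_{\poc}$ in parallel with Proposition \ref{prop:recurrence formula of map g} and Proposition \ref{prop:shuffle of map g}. Granted those, the present statement is pure bookkeeping, and I expect the only point needing care to be the precise identification of the inverse map $\ganit_v(\poc)$ and the verification that its inclusion properties are asserted on the correct source and target sets, so that the restrictions genuinely invert one another.
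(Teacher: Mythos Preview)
Your proof is correct and follows essentially the same approach as the paper's own proof: combine Theorem \ref{thm: ganit(B)(ARIal) subset ARIil} and Remark \ref{rem: inverse map of ganit(pic)} to obtain the set equalities $\ganit_v(\pic)(\overline{\ARI}(\Gamma)_\al)=\overline{\ARI}(\Gamma)_\il$ and $\ganit_v(\pic)(\overline{\GARI}(\Gamma)_\as)=\overline{\GARI}(\Gamma)_\is$, then invoke Theorem \ref{thm: ganit(B) is automorphism} for the morphism structure. The paper compresses this into three lines, while you spell out the injectivity/surjectivity bookkeeping and the role of $\ganit_v(\poc)$ as inverse more explicitly, but the substance is identical.
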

\begin{proof}
By combining Theorem \ref{thm: ganit(B)(ARIal) subset ARIil} and Remark \ref{rem: inverse map of ganit(pic)}, we get
\begin{align*}
&\ganit_v(\pic)(\overline{\ARI}(\Gamma)_\al) = \overline{\ARI}(\Gamma)_\il, \\
&\ganit_v(\pic)(\overline{\GARI}(\Gamma)_\as) = \overline{\GARI}(\Gamma)_\is.
\end{align*}
Hence, by using Theorem \ref{thm: ganit(B) is automorphism}, we obtain the claim.
\end{proof}


\begin{cor}[{\cite[\S 4.7]{E-flex}}]\label{cor:commutative diagram of al,il,as,is}
The following diagram commutes:
\footnote{This diagram is not presented in \cite{SS} and \cite{S-ARIGARI}.}
$$
\xymatrix{
\overline{\GARI}(\Gamma)_\as \ar[rr]^{\ganit_v(\pic)} & & \overline{\GARI}(\Gamma)_\is  \\
\overline{\ARI}(\Gamma)_\al \ar[rr]_{\ganit_v(\pic)} \ar@{->}[u]^{\exp_\times}& & \overline{\ARI}(\Gamma)_\il \ar@{->}[u]_{\exp_\times}
}
$$
\end{cor}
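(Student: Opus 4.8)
The plan is to deduce this immediately from the results already established, since the square in question is nothing but the square of Corollary \ref{cor:commutative diagram} taken with $B=\pic$ and then restricted along the four inclusions $\overline{\ARI}(\Gamma)_\al\hookrightarrow\overline{\ARI}(\Gamma)$, $\overline{\ARI}(\Gamma)_\il\hookrightarrow\overline{\ARI}(\Gamma)$, $\overline{\GARI}(\Gamma)_\as\hookrightarrow\overline{\GARI}(\Gamma)$ and $\overline{\GARI}(\Gamma)_\is\hookrightarrow\overline{\GARI}(\Gamma)$. First I would check that each of the four arrows in the displayed diagram is well defined, i.e.\ that it has image in the indicated subset. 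The two horizontal maps $\ganit_v(\pic)\colon\overline{\ARI}(\Gamma)_\al\to\overline{\ARI}(\Gamma)_\il$ and $\ganit_v(\pic)\colon\overline{\GARI}(\Gamma)_\as\to\overline{\GARI}(\Gamma)_\is$ are precisely the isomorphisms produced by Theorem \ref{thm: ganit(B) is automorphism between al and il or as and is}. The two vertical maps $\exp_\times\colon\overline{\ARI}(\Gamma)_\al\to\overline{\GARI}(\Gamma)_\as$ and $\exp_\times\colon\overline{\ARI}(\Gamma)_\il\to\overline{\GARI}(\Gamma)_\is$ are the restrictions supplied by the equalities \eqref{eqn:exp:barARIal->barGARIas} and \eqref{eqn:exp:barARIil->barGARIis} of Theorem \ref{thm:exp(ARIal)=GARIas etc}. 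Hence all four composites in the diagram make sense.

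Then for commutativity I would take an arbitrary $A\in\overline{\ARI}(\Gamma)_\al$, regard it as an element of $\overline{\ARI}(\Gamma)$, and apply Corollary \ref{cor:commutative diagram} with $B=\pic$; this already gives the identity $\ganit_v(\pic)\bigl(\exp_\times(A)\bigr)=\exp_\times\bigl(\ganit_v(\pic)(A)\bigr)$ in $\overline{\GARI}(\Gamma)$. By the first step both sides of this identity lie in $\overline{\GARI}(\Gamma)_\is$, so the equality persists there, and that is exactly the commutativity of the square. I do not expect any real obstacle: all of the substance has been absorbed into Theorem \ref{thm:exp(ARIal)=GARIas etc}, Theorem \ref{thm: ganit(B) is automorphism between al and il or as and is} and Corollary \ref{cor:commutative diagram}, and the only point requiring care is the bookkeeping that the four restricted arrows are the intended ones, which is precisely what the first step records.
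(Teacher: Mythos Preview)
Your proof is correct and follows exactly the same route as the paper's own proof, which simply cites Theorem \ref{thm:exp(ARIal)=GARIas etc}, Corollary \ref{cor:commutative diagram} and Theorem \ref{thm: ganit(B) is automorphism between al and il or as and is}. Your version merely spells out the bookkeeping (well-definedness of the four restricted arrows, then commutativity via Corollary \ref{cor:commutative diagram} with $B=\pic$) that the paper leaves implicit.
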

\begin{proof}
By using Theorem \ref{thm:exp(ARIal)=GARIas etc}, Corollary \ref{cor:commutative diagram} and Theorem \ref{thm: ganit(B) is automorphism between al and il or as and is}, we obtain this claim.
\end{proof}

Theorem \ref{thm: ganit(B) is automorphism between al and il or as and is} is also used to prove the following.
\begin{thm}[{cf. \cite[Theorem 7.2]{SS}, \cite[Theorem 4.6.1]{S-ARIGARI}}]\label{thm:adari(pal) induce bijection}
The Lie algebra automorphism $\adari(\pal)$ on $\ARI(\Gamma)$ induces a bijection $\adari(\pal):\ARI(\Gamma)_{\underline{\al}*\underline{\al}} \longrightarrow \ARI(\Gamma)_{\underline{\al}*\underline{\il}}$.
\end{thm}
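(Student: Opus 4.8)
Since $\adari(\pal)$ is a Lie algebra automorphism of $\ARI(\Gamma)$, it is in particular a bijection of the underlying set, so it suffices to show that it carries $\ARI(\Gamma)_{\underline{\al}*\underline{\al}}$ \emph{onto} $\ARI(\Gamma)_{\underline{\al}*\underline{\il}}$. Membership of $M\in\ARI(\Gamma)$ in $\ARI(\Gamma)_{\underline{\al}*\underline{\al}}$ (resp.\ in $\ARI(\Gamma)_{\underline{\al}*\underline{\il}}$) is cut out by three conditions: $M$ is alternal; $M$ satisfies the push-type symmetry recorded by the underline; and $\swap(M)$ is alternal (resp.\ alternil). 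The plan is to verify that $\adari(\pal)$ preserves each of the first two conditions, while exchanging the ``$\al$'' and ``$\il$'' forms of the third one by means of $\ganit_v(\pic)$.

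For the first two conditions I would use that $\pal$ is symmetral, i.e.\ $\pal\in\GARI(\Gamma)_\as$. By the appendix (Theorem \ref{thm:expari(ARIal) = GARIas}) one has $\GARI(\Gamma)_\as=\expari(\ARI(\Gamma)_\al)$, so $\GARI(\Gamma)_\as$ is the exponential group of the Lie algebra $\bigl(\ARI(\Gamma)_\al,\ari\bigr)$; hence its adjoint action, which is $\gari$-conjugation and which underlies $\adari$, stabilises $\ARI(\Gamma)_\al$. Therefore $M$ is alternal if and only if $\adari(\pal)(M)$ is, and the same holds for $\adari(\pal)^{-1}$; the analogous stability of the underline symmetry under $\gari$-conjugation by a symmetral, push-symmetric mould gives that $\adari(\pal)$ preserves that condition as well.

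The key point is the compatibility of $\adari(\pal)$ with $\ganit_v(\pic)$ through the swap involution:
$$
\swap\bigl(\adari(\pal)(M)\bigr)=\ganit_v(\pic)\bigl(\swap(M)\bigr)\qquad\text{for all }M\in\ARI(\Gamma).
$$
Granting this identity, Theorem \ref{thm: ganit(B) is automorphism between al and il or as and is}---which asserts precisely that $\ganit_v(\pic)$ restricts to a bijection $\overline{\ARI}(\Gamma)_\al\to\overline{\ARI}(\Gamma)_\il$---shows that $\swap(\adari(\pal)(M))$ is alternil if and only if $\swap(M)$ is alternal. Combining this with the previous paragraph, $\adari(\pal)(M)\in\ARI(\Gamma)_{\underline{\al}*\underline{\il}}$ if and only if $M\in\ARI(\Gamma)_{\underline{\al}*\underline{\al}}$; applying the same reasoning to $\adari(\pal)^{-1}$ (which on the $v$-side corresponds to $\ganit_v(\poc)$, cf.\ Remark \ref{rem: inverse map of ganit(pic)}) yields the reverse containment, and hence $\adari(\pal)$ restricts to the asserted bijection.

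I expect the intertwining identity $\swap\circ\adari(\pal)=\ganit_v(\pic)\circ\swap$ to be the main obstacle: it is a flexion-algebra computation expressing how the swap conjugates the $u$-side operator $\adari(\pal)$ into a $\ganit$-type operator on the $v$-side, together with the identification of the resulting $v$-side mould with $\pic$. One may either import it from Ecalle's papers or derive it directly from the definitions of $\adari$, $\ganit_v$, $\swap$ and $\pal$, where the bookkeeping of flexions is the delicate part.
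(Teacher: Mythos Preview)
Your overall strategy matches the paper's: the paper does not give a self-contained proof of this theorem but indicates (in the introduction and in the remark immediately following) that the argument of \cite{S-ARIGARI} goes through once one supplies the two ingredients it left incomplete, namely Theorem~\ref{thm: ganit(B) is automorphism between al and il or as and is} and Theorem~\ref{thm:expari(ARIal) = GARIas}. You have correctly isolated exactly these two results as the essential inputs and placed them in the right roles---the $\ganit_v(\pic)$ isomorphism to convert $\swap$-alternality into $\swap$-alternility, and the $\expari$ result to show that conjugation by a symmetral mould preserves alternality.

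The one point to sharpen is your intertwining identity. The clean equality
\[
\swap\circ\adari(\pal)=\ganit_v(\pic)\circ\swap
\]
does not hold on all of $\ARI(\Gamma)$ as written; the swap of a $\gari$-conjugation is not simply a $\ganit$-action, and the formula actually used in \cite{S-ARIGARI} carries an additional $\adari$-type factor on the $v$-side involving $\pil=\swap(\pal)$. Dealing with that extra factor is part of why Theorem~\ref{thm:expari(ARIal) = GARIas} is genuinely needed (beyond the $u$-side use you already make of it), and is also where the push-type symmetry recorded by the underline plays a role. You are right to flag this step as the main obstacle and to say it must be imported; just be aware that the version you wrote down is a simplification of what actually has to be verified.
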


\begin{cor}
The vector space $\ARI(\Gamma)_{\underline{\al}*\underline{\il}}$ forms a Lie subalgebra of $\ARI(\Gamma)$ under the $\ari_u$-bracket.
\end{cor}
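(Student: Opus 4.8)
The plan is to obtain the assertion by transporting a known Lie algebra structure across the bijection of Theorem \ref{thm:adari(pal) induce bijection}. Recall two facts, neither of which needs to be reproved here. First, $\adari(\pal)$ is a Lie algebra automorphism of $(\ARI(\Gamma),\ari_u)$; this is exactly how it enters Theorem \ref{thm:adari(pal) induce bijection}. Second, the space $\ARI(\Gamma)_{\underline{\al}*\underline{\al}}$ of bialternal moulds is a Lie subalgebra of $(\ARI(\Gamma),\ari_u)$ (Ecalle; see \cite{SS}, \cite{S-ARIGARI}).

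First I would observe that, since $\adari(\pal)\colon(\ARI(\Gamma),\ari_u)\to(\ARI(\Gamma),\ari_u)$ is a Lie algebra isomorphism, the image $\adari(\pal)\bigl(\ARI(\Gamma)_{\underline{\al}*\underline{\al}}\bigr)$ of the Lie subalgebra $\ARI(\Gamma)_{\underline{\al}*\underline{\al}}$ is again a Lie subalgebra of $(\ARI(\Gamma),\ari_u)$: it is a $\Q$-linear subspace because $\adari(\pal)$ is $\Q$-linear, and it is closed under the $\ari_u$-bracket because, for $M=\adari(\pal)(M')$ and $N=\adari(\pal)(N')$ with $M',N'\in\ARI(\Gamma)_{\underline{\al}*\underline{\al}}$, one has $\ari_u(M,N)=\adari(\pal)\bigl(\ari_u(M',N')\bigr)$ and $\ari_u(M',N')\in\ARI(\Gamma)_{\underline{\al}*\underline{\al}}$.

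Next I would invoke Theorem \ref{thm:adari(pal) induce bijection}, which identifies this image precisely: $\adari(\pal)\bigl(\ARI(\Gamma)_{\underline{\al}*\underline{\al}}\bigr)=\ARI(\Gamma)_{\underline{\al}*\underline{\il}}$. Combining the two steps, $\ARI(\Gamma)_{\underline{\al}*\underline{\il}}$ is a Lie subalgebra of $(\ARI(\Gamma),\ari_u)$, which is the claim.

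There is no substantial obstacle once Theorem \ref{thm:adari(pal) induce bijection} and the two recalled facts are in hand; the only mild subtlety is to make sure the bijection of Theorem \ref{thm:adari(pal) induce bijection} is realised as the restriction of the \emph{global} Lie algebra automorphism $\adari(\pal)$, so that ``image of a Lie subalgebra under a Lie algebra automorphism'' applies verbatim. One could alternatively verify the $\Q$-linearity of $\ARI(\Gamma)_{\underline{\al}*\underline{\il}}$ directly, since alternility and alternality are linear conditions, but this is already subsumed by the argument above.
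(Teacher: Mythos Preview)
Your argument is correct and is exactly the intended deduction: the corollary is stated in the paper without proof precisely because it follows immediately from Theorem~\ref{thm:adari(pal) induce bijection} by transporting the Lie subalgebra $\ARI(\Gamma)_{\underline{\al}*\underline{\al}}$ along the Lie algebra automorphism $\adari(\pal)$. There is nothing to add.
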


\begin{remark}
To finish the proof of Theorem \ref{thm:adari(pal) induce bijection}, we actually need the proof of another claim, that is, Theorem \ref{thm:expari(ARIal) = GARIas}.
We give this proof in Appendix \ref{sec:expari(ARIal)=GARIas}.
\end{remark}

\bigskip
\thanks{ {\it Acknowledgements}. The author is cordially grateful to Professor H. Furusho for guiding him towards this topic and for giving  useful suggestions to him.}

\appendix

\section{On $\expari(\ARI(\Gamma)_\al) =\GARI(\Gamma)_\as$}\label{sec:expari(ARIal)=GARIas}
In this appendix, we give the proof of Theorem \ref{thm:expari(ARIal) = GARIas}, whose claim is that
$\expari(\ARI(\Gamma)_\al) = \GARI(\Gamma)_\as$. 
In \S \ref{subsec:appendix, preparation}, we recall a claim in \cite{FK}, and we introduce a certain sequence and show the important property. These are useful to prove the above theorem.
In \S \ref{subsec:proof of expari(ARIal) =GARIas}, we actually give a proof of the above theorem.
\subsection{Preparation}\label{subsec:appendix, preparation}
The following claim in \cite{FK} is used to prove Theorem \ref{thm:expari(ARIal) = GARIas}.

\begin{lem}[{\cite[(A.3)]{FK} (cf. \cite[Proposition 1.13]{FK})}]\label{lem:alternality of arit}
For $A,B\in\ARI(\Gamma)_\al$, we have
$$
\arit(B)(A)\in\ARI(\Gamma)_\al.
$$
Here, $\arit(B)$ is a derivation on $\BIMU(\Gamma)$ (See \cite[Definition 1.9]{FK} for detailed definition of $\arit$).
\end{lem}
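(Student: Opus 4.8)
The plan is to reproduce the computation of \cite[(A.3)]{FK}, which is the heart of the proof of \cite[Proposition 1.13]{FK}. First I would recall from \cite[Definition 1.9]{FK} the explicit flexion formula for the derivation $\arit(B)$: for a word $w=(u_1,\dots,u_m)$ with letters in $U_\Z$, the value $\bigl(\arit(B)(A)\bigr)(w)$ is a signed sum, over the decompositions $w=abc$ of $w$ into three consecutive subwords, of products $A(\cdots)B(\cdots)$ in which the middle block $b$ is flexed against an adjacent block (the two kinds of term, $A\bigl(a\,\ulflex{b}{c}\bigr)B\bigl(\lrflex{b}{c}\bigr)$ with $b,c\neq\emptyset$ and $A\bigl(\urflex{b}{a}\,c\bigr)B\bigl(\llflex{b}{a}\bigr)$ with $a,b\neq\emptyset$, entering with opposite signs). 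Since $\bigl(\arit(B)(A)\bigr)(\emptyset)=0$ we have $\arit(B)(A)\in\ARI(\Gamma)$, so by \eqref{eqn:def of al,as} the task is to prove that for all $p,q\geq1$ the shuffle sum $\sum_{\alpha}\Sh{u_1,\dots,u_p}{u_{p+1},\dots,u_{p+q}}{\alpha}\bigl(\arit(B)(A)\bigr)(\alpha)$ — with the group components $\sigma_i$ suppressed — vanishes.

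The second step is to substitute the flexion formula for $\bigl(\arit(B)(A)\bigr)(\alpha)$, interchange the two summations, and use the factorization \eqref{eqn:shuffle coefficient} of the shuffle coefficients to distribute the shuffle of the two windows $(u_1,\dots,u_p)$ and $(u_{p+1},\dots,u_{p+q})$ over the three-block decomposition $\alpha=abc$. After this regrouping the terms fall into two families, according to the position of the flexed block: either $b$ (together with the matching end of its neighbour) comes entirely from one of the two windows, or it straddles the cut between the windows. In the first family the resulting inner sums are, once the flexions have been carried out, exactly alternality relations for $A$ — in suitably flexed variables — or for $B$, hence vanish because $A,B\in\ARI(\Gamma)_\al$. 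In the second family, the straddling contributions produced by the two signed sums in the $\arit$ formula cancel one another after applying the elementary identities relating $\ulflex{\cdot}{\cdot}$ with $\urflex{\cdot}{\cdot}$ and $\lrflex{\cdot}{\cdot}$ with $\llflex{\cdot}{\cdot}$ recorded in \cite[Definition 1.8]{FK}.

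The main obstacle, and the only part that is not pure bookkeeping, is the \emph{compatibility of the flexion operations with the shuffle product}: one must check that shuffling two words and then flexing agrees, term by term, with first flexing the constituent pieces and then shuffling, which is what legitimizes the regrouping above. This is precisely where the careful flexion markings of \cite{FK} are used. Alternatively, one could try to run the argument in the dimould language of \S\ref{sec:Exponential map}: by Proposition \ref{prop:gp-like, Lie-like}(i) it would suffice to exhibit a linear operator $T_B$ on $\DIMU(\Gamma)$ preserving Lie-like dimoulds and satisfying $\shmap\circ\arit(B)=T_B\circ\shmap$, and then conclude that $\shmap\bigl(\arit(B)(A)\bigr)=\arit(B)(A)\otimes I+I\otimes\arit(B)(A)$. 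However, establishing such an intertwining relation reduces once more to the same flexion--shuffle compatibility, so this route does not shorten the essential work.
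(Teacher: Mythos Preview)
The paper does not give its own proof of this lemma: it is stated in \S\ref{subsec:appendix, preparation} purely as a quotation of \cite[(A.3)]{FK} (cf.\ \cite[Proposition 1.13]{FK}), with no argument supplied. Your proposal is therefore not competing with a proof in the paper but rather filling in the cited reference, and the sketch you give is exactly the computation carried out in \cite{FK}: expand $\arit(B)(A)$ via its flexion formula, feed it into the shuffle sum \eqref{eqn:def of al,as}, use the factorization \eqref{eqn:shuffle coefficient} of the shuffle coefficients to split according to how the middle block $b$ meets the two windows, and then invoke the alternality of $A$ and $B$ together with the basic flexion identities from \cite[Definition 1.8]{FK} to kill each family of terms. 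That is the right plan, and you have correctly identified the only genuinely nontrivial step, namely the compatibility of the flexion markings with the shuffle product, which is precisely what the combinatorial bookkeeping in \cite[(A.3)]{FK} establishes.

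One small point of caution: double-check which family is disposed of by which hypothesis. In the standard organization it is the \emph{straddling} case (the middle block meeting both windows) that is killed by the alternality of $B$, since the inner shuffle sum over the middle block becomes an alternality relation for $B$ in flexed variables; the case where $b$ lies entirely in one window is the one where the two signed halves of the $\arit$ formula partially cancel and the residue is an alternality relation for $A$. Your description seems to swap these roles. This does not affect the validity of the argument, only the labeling.
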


In \S \ref{sec:Mould}, we introduced the non-commutative free monoid $U_\Z^\bullet$ (Notation \ref{not:on expression of elements in V_Z}) generated by all element of the set $U_\Z$ with the empty word $\emptyset$ as the unit, and introduced the non-commutative polynomial $\Q$-algebra $\mathcal A_U:=\Q \langle U_\Z \rangle$ (\S \ref{subsec:al,il,as,is}) generated by $U_\Z$.
As an analogue of these, we consider the non-commutative free monoid $\N^\bullet$ generated by all natural number with the empty word $\emptyset$ as the unit, that is, $\N^\bullet=\bigsqcup_{k\geq0}\N^k$, and consider the non-commutative polynomial $\Q$-algebra $\Q \langle \N \rangle$ generated by all natural number.
We equip $\Q \langle \N \rangle$ the product $\shuffle$ defined in \eqref{eqn:shuffle product}, and then the pair $(\Q \langle \N \rangle, \shuffle)$ forms a commutative, associative, unital $\Q$-algebra.
Similarly to \eqref{eqn:shuffle product, coefficient def}, we define the family $\{\Sh{\Bf{\footnotesize $m$}}{\Bf{\footnotesize $n$}}{\Bf{\footnotesize $k$}}\}_{\Bf{\footnotesize $m$},\Bf{\footnotesize $n$},\Bf{\footnotesize $k$}\in\N^\bullet}$ in $\Z$ by
$$
\Bf{$m$} \shuffle \Bf{$n$}
=\sum_{\Bf{\footnotesize $k$}\in\N^\bullet}
\Sh{\Bf{$m$}}{\Bf{$n$}}{\Bf{$k$}} \Bf{$k$},
$$
for $\Bf{$m$},\Bf{$n$}\in\N^\bullet$.

\begin{definition}
Let $f=\{f(n_1,\dots,n_r)\}_{r\in\N,n_i\in\N}$ be a family in $\C$.
We call $f$ \textit{symmetral} if $f$ satisfies
$$
\sum_{\Bf{\footnotesize $k$}\in\N^\bullet}
\Sh{\Bf{$m$}}{\Bf{$n$}}{\Bf{$k$}}f(\Bf{$k$})
=f(\Bf{$m$})f(\Bf{$n$}),
$$
for $\Bf{$m$},\Bf{$n$}\in\N^\bullet$.
\end{definition}

We define the family $Ex=\{Ex(n_1,\dots,n_r)\}_{r\in\N,n_i\in\N}$ in $\Q$ by
\footnote{This $Ex$ is denoted in \cite[(2.52)]{E-flex}.}
\begin{equation}\label{eqn:def of Ex}
Ex(n_1,\dots,n_r)
:=\frac{1}{(n_1-1)!\cdots(n_r-1)!}\frac{1}{(n_1+\cdots+n_r)\cdots (n_{r-1}+n_r)n_r}
\end{equation}
for $r\in\mathbb{N}$ and $n_1,\dots,n_r\in\N$.

\begin{lem}\label{lem:Ex is symmetral}
The family $Ex$ is symmetral.
\end{lem}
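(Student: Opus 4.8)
The plan is to realise the family $Ex$ through a parametrised iterated integral over an ordered simplex and then to deduce its symmetrality from the classical shuffle behaviour of iterated integrals. For $r\geq 1$ I would introduce the generating series
$$\mathcal E_r(X_1,\dots,X_r):=\sum_{n_1,\dots,n_r\geq 1}Ex(n_1,\dots,n_r)\,X_1^{n_1-1}\cdots X_r^{n_r-1}\in\Q[[X_1,\dots,X_r]],$$
and first establish the integral formula
$$\mathcal E_r(X_1,\dots,X_r)=\int_{1>s_1>\cdots>s_r>0}e^{s_1X_1}\cdots e^{s_rX_r}\,ds_1\cdots ds_r,$$
understood as term-by-term integration. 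Expanding each exponential and using the elementary evaluation
$$\int_{1>s_1>\cdots>s_r>0}s_1^{k_1}\cdots s_r^{k_r}\,ds=\prod_{i=1}^{r}\bigl(k_i+k_{i+1}+\cdots+k_r+(r-i+1)\bigr)^{-1},$$
the substitution $n_i=k_i+1$ — under which $k_i+\cdots+k_r+(r-i+1)=n_i+\cdots+n_r$ and $k_i!=(n_i-1)!$ — turns the right-hand side into exactly $\sum Ex(n_1,\dots,n_r)X_1^{n_1-1}\cdots X_r^{n_r-1}$. This is a short direct computation.

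The heart of the argument is the product identity: for all $p,q\geq 1$,
$$\mathcal E_p(X_1,\dots,X_p)\,\mathcal E_q(Y_1,\dots,Y_q)=\sum_{\tau}\mathcal E_{p+q}\bigl(W^{\tau}_1,\dots,W^{\tau}_{p+q}\bigr),$$
where $\tau$ runs over the $\binom{p+q}{p}$ order-preserving interleavings of $(1,\dots,p)$ with $(p+1,\dots,p+q)$ and $(W^{\tau}_1,\dots,W^{\tau}_{p+q})$ is the corresponding interleaving of the letters $(X_1,\dots,X_p)$ with $(Y_1,\dots,Y_q)$. This is the usual shuffle-of-simplices argument: the product of the two integrals equals the integral of $\prod_i e^{s_iX_i}\prod_j e^{t_jY_j}$ over the box $\{1>s_1>\cdots>s_p>0\}\times\{1>t_1>\cdots>t_q>0\}\subset(0,1)^{p+q}$; decomposing this region according to the relative order of all $p+q$ coordinates produces, up to a null set, a disjoint union of $(p+q)$-dimensional ordered simplices indexed by $\tau$, and on the $\tau$-piece relabelling the coordinates in decreasing order recovers $\mathcal E_{p+q}(W^{\tau})$.

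It remains to extract coefficients. Fix $\mathbf m=(m_1,\dots,m_p)$ and $\mathbf n=(n_1,\dots,n_q)$ in $\N^\bullet$ with $p,q\geq 1$ and compare, on both sides of the product identity, the coefficient of the monomial $X_1^{m_1-1}\cdots X_p^{m_p-1}Y_1^{n_1-1}\cdots Y_q^{n_q-1}$. The left-hand side gives $Ex(\mathbf m)\,Ex(\mathbf n)$. On the right-hand side, since $X_1,\dots,X_p,Y_1,\dots,Y_q$ are pairwise distinct formal variables, the $\tau$-term contributes exactly $Ex(\mathbf k^{\tau})$, where $\mathbf k^{\tau}\in\N^\bullet$ is the interleaving of $\mathbf m$ and $\mathbf n$ prescribed by $\tau$; summing over all $\tau$ gives $\sum_{\mathbf k\in\N^\bullet}\Sh{\mathbf m}{\mathbf n}{\mathbf k}Ex(\mathbf k)$ by the definition of the shuffle product on $\Q\langle\N\rangle$ — the $\binom{p+q}{p}$ interleavings accounting for the total multiplicity $\sum_{\mathbf k}\Sh{\mathbf m}{\mathbf n}{\mathbf k}$, repeated entries of $\mathbf m,\mathbf n$ being handled automatically. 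Hence $\sum_{\mathbf k}\Sh{\mathbf m}{\mathbf n}{\mathbf k}Ex(\mathbf k)=Ex(\mathbf m)Ex(\mathbf n)$ whenever $\mathbf m,\mathbf n$ are nonempty; the cases with $\mathbf m$ or $\mathbf n$ empty are immediate (with the convention $Ex(\emptyset)=1$). Therefore $Ex$ is symmetral.

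The two displayed computations — the monomial integral over the ordered simplex and the coefficient extraction — are routine. The step that requires care is the product identity, i.e. making the decomposition of the box $\Delta_p\times\Delta_q$ into interleaved ordered simplices precise while keeping track of which formal variable $X_i$ or $Y_j$ is attached to each sorted coordinate, so that the geometric sum over pieces matches the combinatorial sum over shuffles with the correct multiplicities. No convergence issue arises, everything being a finite-dimensional integral of a polynomial over a bounded region; alternatively, if one wishes to dispense with integrals altogether, the product identity can be obtained purely formally inside $\Q[[X_1,\dots,X_p,Y_1,\dots,Y_q]]$ from the elementary simplex-volume identity $\int_{\Delta_p}s_1^{a_1}\cdots s_p^{a_p}\,ds\cdot\int_{\Delta_q}t_1^{b_1}\cdots t_q^{b_q}\,dt=\sum_\tau\int_{\Delta_{p+q}}z_1^{c^{\tau}_1}\cdots z_{p+q}^{c^{\tau}_{p+q}}\,dz$, where $\Delta_r=\{1>s_1>\cdots>s_r>0\}$ and $\mathbf c^{\tau}$ interleaves $\mathbf a$ and $\mathbf b$ according to $\tau$.
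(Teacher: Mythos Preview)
Your proof is correct and takes a genuinely different route from the paper's. The paper proves the symmetrality of $Ex$ by a direct induction on the total length $i+j$: it uses the recursion $Ex(\Bf{$m$})=\frac{1}{(m_1-1)!}\frac{1}{|\Bf{$m$}|}Ex(\Bf{$m$}')$ together with the recursive definition of the shuffle product to reduce $\sum_{\Bf{\footnotesize $k$}}\Sh{\Bf{$m$}}{\Bf{$n$}}{\Bf{$k$}}Ex(\Bf{$k$})$ to the induction hypothesis, then recombines the pieces algebraically. Your approach instead identifies the generating series $\mathcal E_r$ with an iterated integral $\int_{1>s_1>\cdots>s_r>0}\prod_i e^{s_iX_i}\,ds$ and invokes the classical shuffle-of-simplices decomposition to obtain the product identity, from which symmetrality drops out by coefficient extraction. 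The paper's argument is entirely self-contained and elementary, requiring nothing beyond the recursive definitions; your argument is more conceptual, explaining \emph{why} $Ex$ is symmetral --- it is the coefficient family of a genuine iterated integral, hence inherits the shuffle property from Chen's theory --- and the same template would apply to any family arising this way. Both arguments are short; yours trades the induction bookkeeping for a geometric picture, at the cost of importing (or reproving) the simplex decomposition.
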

\begin{proof}
Let $i,j\geq1$ and $\Bf{$m$}=(m_1,\dots,m_i)\in\N^\bullet$ and $\Bf{$n$}=(m_{i+1},\dots,m_{i+j})\in\N^\bullet$.
By the definition, it is sufficient to prove
\begin{equation}\label{eqn:symmetrality of Ex}
\sum_{\Bf{\footnotesize $k$}\in\N^\bullet}
\Sh{\Bf{$m$}}{\Bf{$n$}}{\Bf{$k$}}Ex(\Bf{$k$})
=Ex(\Bf{$m$})Ex(\Bf{$n$}).
\end{equation}
We prove this by induction on $k=i+j\geq2$.
When $k=2$, i.e., $i=j=1$, the left hand side of \eqref{eqn:symmetrality of Ex} is equal to
\begin{align*}
&Ex(m_1,m_2) + Ex(m_2,m_1) \\
&=\frac{1}{(m_1-1)!(m_2-1)!}\frac{1}{(m_1+m_2)m_2}
+ \frac{1}{(m_1-1)!(m_2-1)!}\frac{1}{(m_2+m_1)m_1} \\
&=\frac{1}{(m_1-1)!(m_2-1)!}\frac{m_1+m_2}{(m_1+m_2)m_1m_2} \\
&=Ex(m_1)Ex(m_2).
\end{align*}
Hence, the equation \eqref{eqn:symmetrality of Ex} holds for $k=2$.
Assume that the equation \eqref{eqn:symmetrality of Ex} holds for $2\leq k\leq k_0(\geq2)$.
When $k=k_0+1$, by putting $\Bf{$m$}'=(m_2,\dots,m_i)$ and $\Bf{$n$}'=(n_2,\dots,n_j)$, we have
\begin{align*}
\sum_{\Bf{\footnotesize $k$}\in\N^\bullet}\Sh{\Bf{$m$}}{\Bf{$n$}}{\Bf{$k$}}Ex(\Bf{$k$}) 
=\sum_{\Bf{\footnotesize $k$}\in\N^\bullet}
\left\{\Sh{\Bf{$m$}'}{\Bf{$n$}}{\Bf{$k$}}
Ex(m_1,\Bf{$k$})+\Sh{\Bf{$m$}}{\Bf{$n$}'}{\Bf{$k$}}
Ex(n_1,\Bf{$k$})\right\}.
\end{align*}
Here, by the definition \eqref{eqn:def of Ex}, we have $Ex(\Bf{$m$})=\frac{1}{(m_1-1)!}\frac{1}{|\Bf{$m$}|}Ex(\Bf{$m$}')$ (where $|\Bf{$m$}|:=m_1+\cdots+m_i$), so we get
$$
Ex(m_1,\Bf{$k$})=\frac{1}{(m_1-1)!}\frac{1}{|\Bf{$m$}|+|\Bf{$n$}|}Ex(\Bf{$k$}),
\quad Ex(n_1,\Bf{$k$})=\frac{1}{(n_1-1)!}\frac{1}{|\Bf{$m$}|+|\Bf{$n$}|}Ex(\Bf{$k$}).
$$
Therefore, we calculate
\begin{align*}
&\sum_{\Bf{\footnotesize $k$}\in\N^\bullet}\Sh{\Bf{$m$}}{\Bf{$n$}}{\Bf{$k$}}Ex(\Bf{$k$}) \\
&=\frac{1}{|\Bf{$m$}|+|\Bf{$n$}|}\sum_{\Bf{\footnotesize $k$}\in\N^\bullet}
\left\{\Sh{\Bf{$m$}'}{\Bf{$n$}}{\Bf{$k$}}
\frac{1}{(m_1-1)!}Ex(\Bf{$k$})+\Sh{\Bf{$m$}}{\Bf{$n$}'}{\Bf{$k$}}
\frac{1}{(n_1-1)!}Ex(\Bf{$k$})\right\}.
\intertext{By induction hypothesis, we have}
&=\frac{1}{|\Bf{$m$}|+|\Bf{$n$}|}
\left\{	\frac{1}{(m_1-1)!}Ex(\Bf{$m$}')Ex(\Bf{$n$})+
\frac{1}{(n_1-1)!}Ex(\Bf{$m$})Ex(\Bf{$n$}')\right\} \\
&=\frac{|\Bf{$m$}|}{|\Bf{$m$}|+|\Bf{$n$}|}
Ex(\Bf{$m$})Ex(\Bf{$n$})+
\frac{|\Bf{$n$}|}{|\Bf{$m$}|+|\Bf{$n$}|}
Ex(\Bf{$m$})Ex(\Bf{$n$}) \\
&=Ex(\Bf{$m$})Ex(\Bf{$n$}).
\end{align*}
Hence, we finish the proof.
\end{proof}

For $k\geq0$ and $M\in\BIMU(\Gamma)$, we define
\begin{equation*}
\preari_k(M):=
\left\{\begin{array}{ll}
	I & (k=0), \\
	\preari(\preari_{k-1}(M),M) & (k\geq1).
\end{array}\right.
\end{equation*}
We define the map $\expari:\ARI(\Gamma) \rightarrow \GARI(\Gamma)$ by
\footnote{This map is defined in \cite[(2.50)]{E-flex}.}
\begin{equation}\label{eqn:def of expari}
\expari(M):=\sum_{k\geq0}\frac{1}{k!}\preari_k(M),
\end{equation}
for $M\in\ARI(\Gamma)$.

\subsection{Proof}\label{subsec:proof of expari(ARIal) =GARIas}
In this subsection, we first prove key formula (Proposition \ref{prop: expari expansion}) which is displayed in \cite[(2.51)]{E-flex}.
By using this proposition, we show main claim (Theorem \ref{thm:expari(ARIal) = GARIas}) in this appendix.

\begin{prop}[{\cite[(2.51)]{E-flex}}]\label{prop: expari expansion}
For any $A\in\ARI(\Gamma)$, we have
\begin{equation}\label{eqn:expari expansion}
\expari(A)
=I+\sum_{\substack{
	\Bf{\footnotesize $m$}=(m_p)\in\mathbb{N}^r \\
	r\geq1}}
Ex(\Bf{$m$})A_{m_1}\times\cdots\times A_{m_r},
\end{equation}
where $A_m:=\arit(A)^{m-1}(A)$ for $m\in\mathbb{N}$.
\end{prop}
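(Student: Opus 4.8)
\textbf{Proof proposal for Proposition \ref{prop: expari expansion}.}

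The plan is to prove \eqref{eqn:expari expansion} by expanding the definition \eqref{eqn:def of expari} of $\expari$, unwinding each $\preari_k(A)$ into a sum of ordinary $\times$-products of the moulds $A_m = \arit(A)^{m-1}(A)$, and recognizing the resulting combinatorial coefficients as the values $Ex(\Bf{$m$})$. First I would recall that $\preari(M,N) = M \times N + \arit(N)(M)$ (the standard decomposition of the $\preari$-product into its ``separative'' and ``fissile'' parts), so that $\preari_k(A) = \preari(\preari_{k-1}(A), A) = \preari_{k-1}(A) \times A + \arit(A)(\preari_{k-1}(A))$. This gives a recursion for $\preari_k(A)$ in terms of $\preari_{k-1}(A)$ combining a right-multiplication by $A$ and an application of the derivation $\arit(A)$. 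Iterating this recursion, $\preari_k(A)$ becomes a sum, over all the ways of interleaving $k$ ``multiply by $A$'' operations with some number of ``apply $\arit(A)$'' operations, of products $A_{m_1} \times \cdots \times A_{m_r}$ with $m_1 + \cdots + m_r = k$; the coefficient of such a product is the number of monotone lattice paths realizing that interleaving, which one computes to be $\binom{k}{m_1, m_2 - 1 + \text{(correction)}, \dots}$-type multinomial — more precisely the product $\tfrac{k!}{(m_1-1)!\cdots(m_r-1)!}\cdot\tfrac{1}{(m_1+\cdots+m_r)(m_2+\cdots+m_r)\cdots m_r}$, so that after dividing by $k!$ in \eqref{eqn:def of expari} one gets exactly $Ex(m_1,\dots,m_r)$.

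The cleanest way to carry out the coefficient bookkeeping is by induction on $k$: I would introduce $c_k(\Bf{$m$})$ as the coefficient of $A_{m_1}\times\cdots\times A_{m_r}$ in $\preari_k(A)$, establish from the recursion $\preari_k(A) = \preari_{k-1}(A)\times A + \arit(A)(\preari_{k-1}(A))$ the recurrence
\[
c_k(m_1,\dots,m_r) = c_{k-1}(m_1,\dots,m_{r-1}, m_r - 1) + \sum_{i=1}^{r} c_{k-1}(m_1,\dots,m_i - 1,\dots,m_r),
\]
where in the first term $m_r-1$ is dropped if it becomes $0$ (the ``$\times A$'' appended a fresh $A_1$), and in the sum a term is dropped when $m_i = 1$; then check that $c_k(\Bf{$m$}) = \tfrac{k!}{(m_1-1)!\cdots(m_r-1)!}\cdot\tfrac{1}{|\Bf{$m$}|\,|m_2\cdots m_r|\cdots m_r}$ satisfies the same recurrence and initial condition $c_1(1) = 1$. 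This last verification is a direct algebraic identity of the same flavor as the one appearing in the proof of Lemma \ref{lem:Ex is symmetral}. Summing $\tfrac{1}{k!}\preari_k(A)$ over $k\geq 0$ and collecting by $r = l(\Bf{$m$})$ then yields \eqref{eqn:expari expansion}, with the $k=0$ term giving the $I$.

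The main obstacle I expect is pinning down the correct weight in the recursion when $\arit(A)$ hits a product $A_{m_1}\times\cdots\times A_{m_r}$: since $\arit(A)$ is a derivation, it produces a sum over positions $i$ of $A_{m_1}\times\cdots\times\arit(A)(A_{m_i})\times\cdots\times A_{m_r} = A_{m_1}\times\cdots\times A_{m_i+1}\times\cdots\times A_{m_r}$, and one must be careful that this genuinely only raises one index by $1$ and does not create cross terms between adjacent factors — here I would invoke that $\arit(A)$ is a derivation for $\times$ (which is part of the cited definition of $\arit$ in \cite{FK}, also used in Lemma \ref{lem:alternality of arit}) precisely so that no such cross terms arise. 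The other delicate point is the boundary behavior of the recurrence (terms with an index hitting $0$), which must be matched against the factorial formula's behavior when some $m_i = 1$; this is exactly the same boundary phenomenon handled in Lemma \ref{lem:Ex is symmetral}, so the bookkeeping there can be reused.
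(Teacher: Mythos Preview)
Your approach is essentially the one the paper takes: it proves the proposition via two lemmas (Lemma~\ref{lem:expansion of preari k} and Lemma~\ref{lem:C=const. Ex}) that, exactly as you propose, expand $\preari_k(A)$ as $\sum C(\Bf{$m$})A_{m_1}\times\cdots\times A_{m_r}$ using the derivation property of $\arit(A)$, derive a recurrence for the coefficients $C(\Bf{$m$})$, and then verify that $k!\cdot Ex(\Bf{$m$})$ satisfies that same recurrence with the same initial condition.

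However, your written recurrence has a bookkeeping error. The contribution of $\preari_{k-1}(A)\times A$ to the coefficient of $A_{m_1}\times\cdots\times A_{m_r}$ arises only from right-multiplying $A_{m_1}\times\cdots\times A_{m_{r-1}}$ by $A=A_1$, so it contributes \emph{only when $m_r=1$}, in which case it gives $c_{k-1}(m_1,\dots,m_{r-1})$. For $m_r\geq2$ this operation contributes nothing; your written term $c_{k-1}(m_1,\dots,m_{r-1},m_r-1)$ in that case is spurious and in fact coincides with the $i=r$ summand of your $\arit$-sum, so your formula double-counts that summand. The correct recurrence (which is what the paper derives) is
\[
c_k(m_1,\dots,m_r)\;=\;\sum_{i=1}^{r} c_{k-1}(m_1,\dots,m_i-1,\dots,m_r)\;+\;\delta_{m_r,1}\,c_{k-1}(m_1,\dots,m_{r-1}),
\]
with the convention $c_{k-1}(\dots,0,\dots)=0$. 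Once this is fixed, the verification that $k!\cdot Ex(\Bf{$m$})$ satisfies it is exactly the algebraic identity you allude to (and the paper carries it out in Lemma~\ref{lem:C=const. Ex}); the rest of your outline is sound.
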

To prove this proposition, we show the following two lemmas.
\begin{lem}\label{lem:expansion of preari k}
There exists a family $\left\{C(m_1,\dots,m_r)\right\}_{r\in\mathbb{N}, (m_j)\in\mathbb{N}^r}$ in $\Z$ independent of the mould $A\in\BIMU(\Gamma)$ such that
\begin{equation}\label{eqn:def of preari k}
\preari_k(A)=\sum_{\substack{
	\Bf{\footnotesize $m$}=(m_p)\in\mathbb{N}^r \\
	m_1+\dots+m_r=k \\
	r\geq1 }}
C(\Bf{$m$})A_{m_1}\times\cdots\times A_{m_r}.
\end{equation}
for $k\in\mathbb{N}$.
\end{lem}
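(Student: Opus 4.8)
The plan is to prove this by induction on $k\ge 1$, reducing everything to two elementary facts. Recall the defining identity $\preari(X,Y)=\arit(Y)(X)+X\times Y$; that $\arit(A)$ is a derivation of $(\BIMU(\Gamma),\times)$ (see \cite[Definition 1.9]{FK}, as noted in the statement of Lemma \ref{lem:alternality of arit}), so in particular $\arit(A)(I)=0$ since any derivation annihilates the unit; and that $\arit(A)(A_m)=\arit(A)^{m}(A)=A_{m+1}$, which is immediate from the definition $A_m=\arit(A)^{m-1}(A)$.

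For the base case $k=1$ one computes
\[
\preari_1(A)=\preari(\preari_0(A),A)=\preari(I,A)=\arit(A)(I)+I\times A=A=A_1,
\]
so \eqref{eqn:def of preari k} holds with $C(1):=1$. For the inductive step, assume \eqref{eqn:def of preari k} holds at level $k-1$ with integer coefficients $C(\mathbf m)$ not depending on $A$. Using $\preari_k(A)=\arit(A)\bigl(\preari_{k-1}(A)\bigr)+\preari_{k-1}(A)\times A$, the second summand contributes $\sum_{\mathbf m}C(\mathbf m)\,A_{m_1}\times\cdots\times A_{m_r}\times A_1$, and the Leibniz rule together with $\arit(A)(A_m)=A_{m+1}$ gives
\[
\arit(A)\bigl(A_{m_1}\times\cdots\times A_{m_r}\bigr)
=\sum_{i=1}^{r}A_{m_1}\times\cdots\times A_{m_i+1}\times\cdots\times A_{m_r}.
\]
Both contributions are $\Z$-linear combinations of products $A_{n_1}\times\cdots\times A_{n_s}$ whose indices sum to $k$. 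Grouping these terms by the underlying composition $\mathbf n=(n_1,\dots,n_s)$ of $k$ and reading off the coefficient defines $C(\mathbf n)$; it is an integer built solely from the $C(\mathbf m)$ of level $k-1$ and from the combinatorial factor $1$ attached to each Leibniz term, hence independent of $A$. This closes the induction.

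The argument is essentially bookkeeping, so there is no real obstacle; the single point deserving attention is that the regrouping in the inductive step must not reintroduce a dependence on $A$. This is guaranteed because $\arit(A)$ acts on the formal building blocks $A_m$ only through the two $A$-independent operations ``index shift $A_m\mapsto A_{m+1}$'' and ``Leibniz rule for $\times$'', so that the passage from the coefficients at level $k-1$ to those at level $k$ is purely combinatorial.
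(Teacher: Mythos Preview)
Your proof is correct and follows essentially the same approach as the paper: induction on $k$, using $\preari_k(A)=\arit(A)(\preari_{k-1}(A))+\preari_{k-1}(A)\times A$ together with the Leibniz rule for $\arit(A)$ and $\arit(A)(A_m)=A_{m+1}$. The only difference is that the paper goes on to record the explicit recurrence $C(\Bf{$m$})=\sum_{i=1}^{r}C(m_1,\dots,m_i-1,\dots,m_r)+\delta_{m_r,1}C(m_1,\dots,m_{r-1})$, which it then uses in the next lemma to identify $C(\Bf{$m$})$ with $(m_1+\cdots+m_r)!\cdot Ex(\Bf{$m$})$.
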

\begin{proof}
We prove the existence of $C(m_1,\dots,m_r)$ by induction on $k=m_1+\cdots+m_r\geq1$.
When $k=1$, the left hand side of \eqref{eqn:def of preari k} is equal to $A$, and the right hand side of \eqref{eqn:def of preari k} is equal to $C(1)A_1=C(1)A$.
So by putting $C(1):= 1$, we get the equation \eqref{eqn:def of preari k}.
Assume the equation \eqref{eqn:def of preari k} holds for $k\leq k_0 (\in\N)$.
When $k=k_0+1$, we have
\begin{align*}
&\preari_{k_0+1}(A) \\
&=\preari(\preari_{k_0}(A),A).
\intertext{By induction hypothesis, we calculate}
&=\preari \left(
\sum_{\substack{
	\Bf{\footnotesize $m$}=(m_p)\in\mathbb{N}^r \\
	m_1+\dots+m_r=k_0 \\
	r\geq1 }}
C(\Bf{$m$})A_{m_1}\times\cdots\times A_{m_r}
,A
\right) \\
&=\arit(A) \left(
\sum_{\substack{
	\Bf{\footnotesize $m$}=(m_p)\in\mathbb{N}^r \\
	m_1+\dots+m_r=k_0 \\
	r\geq1 }}
C(\Bf{$m$})A_{m_1}\times\cdots\times A_{m_r}
\right)
+ \left(
\sum_{\substack{
	\Bf{\footnotesize $m$}=(m_p)\in\mathbb{N}^r \\
	m_1+\dots+m_r=k_0 \\
	r\geq1 }}
C(\Bf{$m$})A_{m_1}\times\cdots\times A_{m_r}
\right)
\times A.
\intertext{Because $\arit(A)$ is a derivation and $A=A_1$, we have}
&=\sum_{\substack{
	\Bf{\footnotesize $m$}=(m_p)\in\mathbb{N}^r \\
	m_1+\dots+m_r=k_0 \\
	r\geq1}}
C(\Bf{$m$})
\sum_{i=1}^r
A_{m_1}\times\cdots\times A_{m_{i-1}}\times A_{m_i+1}\times A_{m_{i+1}}\times\cdots\times A_{m_r} \\
&\quad+\sum_{\substack{
	\Bf{\footnotesize $m$}=(m_p)\in\mathbb{N}^r \\
	m_1+\dots+m_r=k_0 \\
	r\geq1}}
C(\Bf{$m$})A_{m_1}\times\cdots\times A_{m_r}\times A_1 \\
&=\sum_{\substack{
	\Bf{\footnotesize $m$}=(m_p)\in\mathbb{N}^r \\
	m_1+\dots+m_r=k_0+1 \\
	r\geq1}}
\sum_{i=1}^r C(m_1,\dots,m_{i-1},m_i-1,m_{i+1},\dots,m_r)
A_{m_1}\times\cdots\times A_{m_r} \\
&\quad+\sum_{\substack{
	\Bf{\footnotesize $m$}=(m_p)\in\mathbb{N}^{r-1} \\
	m_1+\dots+m_{r-1}=k_0 \\
	r\geq2}}
C(m_1,\dots,m_{r-1})A_{m_1}\times\cdots\times A_{m_{r-1}}\times A_1.
\end{align*}
Here, for the first summation, we regard $C(m_1,\dots,m_r)$ as $0$ when there exists $i\in\{1,\dots,r\}$ such that $m_i=0$.
On the other hand, we have
\begin{equation*}
\preari_{k_0+1}(A)
=\sum_{\substack{
	\Bf{\footnotesize $m$}=(m_p)\in\mathbb{N}^r \\
	m_1+\dots+m_r=k_0+1 \\
	r\geq1 }}
C(\Bf{$m$})A_{m_1}\times\cdots\times A_{m_r}.
\end{equation*}
Therefore, we get the following recurrence formulae
\begin{equation}\label{eqn:recurrence formula of C}
\left\{\begin{array}{ll}
	C(m_1)=C(m_1-1) & (r=1), \\
	C(\Bf{$m$})=\displaystyle\sum_{i=1}^rC(m_1,\dots,m_{i-1},m_i-1,m_{i+1},\dots,m_r)+\delta_{m_r,1}\cdot C(m_1,\dots,m_{r-1}) & (r\geq 2),
\end{array}\right.
\end{equation}
for $\Bf{$m$}\in\mathbb{N}^r$ with $k_0+1=m_1+\cdots+m_r\geq2$.
Here, $\delta_{m,1}$ is the Kronecker delta and $C(m_1,\dots,m_r):=0$ if there exists $i\in\{1,\dots,r\}$ such that $m_i=0$.
Hence, by using these recurrence formulae, we obtain a family $\left\{C(m_1,\dots,m_r)\right\}_{r\in\mathbb{N}, (m_j)\in\mathbb{N}^r}$ in $\Z$.
\end{proof}

\begin{lem}\label{lem:C=const. Ex}
For $r\geq1$ and $\Bf{$m$}=(m_1,\dots,m_r)\in\N^r$, we have
$$
C(\Bf{$m$})=(m_1+\dots+m_r)!\cdot Ex(\Bf{$m$}).
$$ 
\end{lem}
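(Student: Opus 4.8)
The plan is to show that the sequence $\widetilde{C}(\mathbf{m}):=(m_1+\cdots+m_r)!\cdot Ex(\mathbf{m})$, with the convention $\widetilde{C}(m_1,\dots,m_r):=0$ whenever some $m_i=0$, satisfies exactly the data that defines $C$, namely the base value $C(1)=1$ together with the recurrence \eqref{eqn:recurrence formula of C}; since \eqref{eqn:recurrence formula of C} expresses $C(\mathbf m)$ for $|\mathbf m|\geq2$ through values of strictly smaller total degree, this data determines a sequence uniquely, so $C=\widetilde C$. The base case follows from $Ex(m)=\frac{1}{(m-1)!\,m}=\frac{1}{m!}$; the same computation gives $\widetilde C(m)=m!\,Ex(m)=1$ for every $m\geq1$, which settles the $r=1$ instance $\widetilde C(m_1)=\widetilde C(m_1-1)$ of \eqref{eqn:recurrence formula of C}.

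For $r\geq2$ and $\mathbf m=(m_1,\dots,m_r)$ with $k:=m_1+\cdots+m_r\geq2$, every nonvanishing term on the right of \eqref{eqn:recurrence formula of C} has total degree $k-1$, so after dividing by $(k-1)!$ the desired identity $\widetilde C(\mathbf m)=\sum_i\widetilde C(m_1,\dots,m_i-1,\dots,m_r)+\delta_{m_r,1}\widetilde C(m_1,\dots,m_{r-1})$ is equivalent to
\begin{equation*}
k\cdot Ex(\mathbf m)=\sum_{\substack{1\leq i\leq r\\ m_i\geq2}}Ex(m_1,\dots,m_i-1,\dots,m_r)+\delta_{m_r,1}\,Ex(m_1,\dots,m_{r-1}).
\end{equation*}
Multiplying through by $\prod_{j=1}^r(m_j-1)!$ and writing $s_j:=m_j+m_{j+1}+\cdots+m_r$ (so $s_1=k$, $s_r=m_r$, and $s_j\geq2$ for $j<r$), this reduces to the purely rational identity
\begin{equation*}
\frac{1}{s_2s_3\cdots s_r}=\sum_{\substack{1\leq i\leq r\\ m_i\geq2}}\frac{m_i-1}{(s_1-1)\cdots(s_i-1)\,s_{i+1}\cdots s_r}+\delta_{m_r,1}\,\frac{1}{(s_1-1)\cdots(s_{r-1}-1)}.
\end{equation*}

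I would prove this last identity by telescoping. Put $R_i:=\bigl((s_1-1)\cdots(s_{i-1}-1)\,s_{i+1}\cdots s_r\bigr)^{-1}$ for $1\leq i\leq r$; each $R_i$ is a well-defined nonzero rational number because $s_j\geq2$ for $j<r$. A one-line computation using $s_i-1-s_{i+1}=m_i-1$ gives $R_i-R_{i+1}=\dfrac{m_i-1}{(s_1-1)\cdots(s_i-1)\,s_{i+1}\cdots s_r}$ for $1\leq i\leq r-1$, which is precisely the $i$-th summand above (and vanishes when $m_i=1$, matching the range of summation). Hence $\sum_{i=1}^{r-1}(\text{summand }i)=R_1-R_r=\dfrac{1}{s_2\cdots s_r}-\dfrac{1}{(s_1-1)\cdots(s_{r-1}-1)}$. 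If $m_r\geq2$, the remaining summand $i=r$ equals $\dfrac{m_r-1}{(s_1-1)\cdots(s_r-1)}=\dfrac{1}{(s_1-1)\cdots(s_{r-1}-1)}$ since $s_r-1=m_r-1$; if $m_r=1$, that summand is absent and the $\delta_{m_r,1}$-term contributes $\dfrac{1}{(s_1-1)\cdots(s_{r-1}-1)}$ instead. In either case the right-hand side collapses to $\dfrac{1}{s_2\cdots s_r}$, which is the left-hand side.

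The base case and the bookkeeping of degrees are routine; the one genuinely delicate point is the case $m_r=1$, where $s_r-1=0$ forces the summand $i=r$ to be dropped (rather than treated as $0$), and it is exactly the $\delta_{m_r,1}$-term of \eqref{eqn:recurrence formula of C} that fills the resulting gap. As a side remark, the argument also yields the closed form $C(m_1,\dots,m_r)=\prod_{j=1}^r\binom{s_j-1}{m_j-1}$, which is an equivalent restatement of the lemma.
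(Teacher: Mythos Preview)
Your proof is correct and follows essentially the same approach as the paper: both verify that $(m_1+\cdots+m_r)!\cdot Ex(\mathbf m)$ satisfies the defining recurrence \eqref{eqn:recurrence formula of C}, reducing to the same rational identity in the partial sums $s_j=m_j+\cdots+m_r$ and splitting into the cases $m_r\geq2$ and $m_r=1$. Your telescoping via $R_i$ is a somewhat cleaner packaging of the same computation that the paper carries out by an iterative expansion of $s_1$.
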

\begin{proof}
By the definition \eqref{eqn:def of Ex}, we have $Ex(1)=1=C(1)$. So it is sufficient to prove that $(m_1+\dots+m_r)!\cdot Ex(\Bf{$m$})$ satisfies the recurrence formulae \eqref{eqn:recurrence formula of C}.
When $r=1$, we have $Ex(m)=\frac{1}{m!}$, so we get
\begin{equation*}
	m!\cdot Ex(m)=m!\cdot\frac{1}{m!}=1=(m-1)!\cdot Ex(m-1).
\end{equation*}
We prove the case of $r\geq2$. Note that we have
\begin{align}\label{eqn:simple expression of Ex}
&Ex(m_1,\dots,m_{i-1},m_i-1,m_{i+1},\dots,m_r) \\
&=\left\{(m_i-1)\prod_{k=1}^r\frac{1}{(m_k-1)!}\right\}
	\cdot\prod_{j=1}^i\frac{m_j+\cdots+m_r}{m_j+\cdots+m_r-1}
	\cdot \prod_{j=1}^r\frac{1}{m_j+\cdots+m_r}, \nonumber
\end{align}
for $1\leq i\leq r$.
When $m_r\neq1$, by using the expression \eqref{eqn:simple expression of Ex}, we calculate
\begin{align*}
&\sum_{i=1}^r(m_1+\dots+m_r-1)!\cdot Ex(m_1,\dots,m_{i-1},m_i-1,m_{i+1},\dots,m_r) \\
&=(m_1+\dots+m_r-1)! \\
&\qquad \cdot \sum_{i=1}^r\left\{(m_i-1)\prod_{k=1}^r\frac{1}{(m_k-1)!}\right\}
	\cdot\prod_{j=1}^i\frac{m_j+\cdots+m_r}{m_j+\cdots+m_r-1}
	\cdot \prod_{j=1}^r\frac{1}{m_j+\cdots+m_r} \\
&=(m_1+\dots+m_r-1)! \\ 
&\qquad \cdot \prod_{k=1}^r\left\{\frac{1}{(m_k-1)!}\frac{1}{m_k+\cdots+m_r}\right\}
	\sum_{i=1}^r\left\{(m_i-1)\prod_{j=1}^i\frac{m_j+\cdots+m_r}{m_j+\cdots+m_r-1}\right\}.
\end{align*}
Here, we have
{\footnotesize
\begin{align*}
&m_1+\cdots+m_r \\
&=\frac{m_1+\cdots+m_r}{m_1+\cdots+m_r-1}\cdot\{(m_1-1)+(m_2+\cdots+m_r)\} \\
&=\frac{m_1+\cdots+m_r}{m_1+\cdots+m_r-1}\cdot\left\{(m_1-1)+\frac{m_2+\cdots+m_r}{m_2+\cdots+m_r-1}\cdot\{(m_2-1)+(m_3+\cdots+m_r)\}\right\}
\intertext{{\normalsize By using this transformation repeatedly, we get}}
&=\frac{m_1+\cdots+m_r}{m_1+\cdots+m_r-1} \\
&\hspace{.2cm}\cdot\left\{(m_1-1)+\frac{m_2+\cdots+m_r}{m_2+\cdots+m_r-1}\cdot\left\{(m_2-1)+\cdots +\frac{m_{r-1}+m_r}{m_{r-1}+m_r-1}\left\{(m_{r-1}-1)+\frac{m_r}{m_r-1}(m_r-1)\right\}\cdots\right\}\right\} \\
&=\sum_{k=1}^r\left\{(m_k-1)\prod_{j=1}^k\frac{m_j+\cdots+m_r}{m_j+\cdots+m_r-1}\right\}.
\end{align*}}
Therefore, we have
\begin{align*}
&\sum_{k=1}^r(m_1+\dots+m_r-1)!\cdot Ex(m_1,\dots,m_{k-1},m_k-1,m_{k+1},\dots,m_r) \\
&=(m_1+\dots+m_r-1)!\prod_{	i=1}^r\left\{\frac{1}{(m_i-1)!}\frac{1}{m_i+\cdots+m_r}\right\}
(m_1+\cdots+m_r) \\
&=(m_1+\dots+m_r)!\cdot Ex(m_1,\dots,m_r),
\end{align*}
that is, we obtain the recurrence formulae \eqref{eqn:recurrence formula of C} for $m_r\neq1$.
Similar to the case of $m_r\neq1$, we obtain the case of $m_r=1$ by using the following equation
\begin{equation*}
	m_1+\cdots+m_{r-1}+1=\sum_{k=1}^{r-1}\left\{(m_k-1)\prod_{j=1}^k\frac{m_j+\cdots+m_{r-1}+1}{m_j+\cdots+m_{r-1}}\right\}+\prod_{j=1}^{r-1}\frac{m_j+\cdots+m_{r-1}+1}{m_j+\cdots+m_{r-1}}.
\end{equation*}
Hence, we get the claim.
\end{proof}

By using the above two lemmas, we prove Proposition \ref{prop: expari expansion}:

\medskip
\noindent
\textit{Proof of Proposition \ref{prop: expari expansion}.}
By using Lemma \ref{lem:expansion of preari k} and Lemma \ref{lem:C=const. Ex}, we have
\begin{align*}
\expari(A)
&=I+\sum_{k\geq1}\frac{1}{k!}\preari_k(A)
\intertext{By using Lemma \ref{lem:expansion of preari k}, we get}
&=I+\sum_{k\geq1}\frac{1}{k!}\sum_{\substack{
	\Bf{\footnotesize $m$}=(m_p)\in\mathbb{N}^r \\
	m_1+\dots+m_r=k \\
	r\geq1 }}
C(\Bf{$m$})A_{m_1}\times\cdots\times A_{m_r}
\intertext{By using Lemma \ref{lem:C=const. Ex}, we calculate}
&=I+\sum_{k\geq1}\sum_{\substack{
	\Bf{\footnotesize $m$}=(m_p)\in\mathbb{N}^r \\
	m_1+\dots+m_r=k \\
	r\geq1 }}
Ex(\Bf{$m$})A_{m_1}\times\cdots\times A_{m_r} \\
&=I + \sum_{\substack{
	\Bf{\footnotesize $m$}=(m_p)\in\mathbb{N}^r \\
	r\geq1}}
Ex(\Bf{$m$})A_{m_1}\times\cdots\times A_{m_r}.
\end{align*}
So we obtain the claim. \hfill $\Box$\\

By using Proposition \ref{prop: expari expansion}, we prove the following theorem.
\begin{thm}[{\cite[Proposition 2.6.1]{S-ARIGARI}}]\label{thm:expari(ARIal) = GARIas}
We have
\begin{equation*}
	\expari(\ARI(\Gamma)_\al)=\GARI(\Gamma)_\as.
\end{equation*}
\end{thm}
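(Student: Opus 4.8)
The plan is to establish the two inclusions $\expari(\ARI(\Gamma)_\al)\subseteq\GARI(\Gamma)_\as$ and $\GARI(\Gamma)_\as\subseteq\expari(\ARI(\Gamma)_\al)$ separately, reducing the second one to the first by an induction on the length of moulds. Throughout I will use the $\BIMU(\Gamma)$-versions of the maps $\shmap$ and $i$ and of Corollary \ref{cor:2.3.1}, Lemma \ref{lem:shuffle map is alg. hom.}, Lemma \ref{lem:tensor is commu.} and Proposition \ref{prop:gp-like, Lie-like}; these are defined and proved word for word as in \S\ref{subsec:Dimould}, with the alphabet $V_\Z$ replaced by $U_\Z$.

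\emph{First inclusion.} Fix $A\in\ARI(\Gamma)_\al$. By induction on $m$, using that $\arit(B)$ raises the length when $B\in\ARI(\Gamma)$ (see \cite{FK}) together with Lemma \ref{lem:alternality of arit}, each $A_m:=\arit(A)^{m-1}(A)$ lies in $\ARI(\Gamma)_\al$. Applying $\shmap$ to the expansion of Proposition \ref{prop: expari expansion} and using that $\shmap$ is an algebra homomorphism with $\shmap(I)=I\otimes I$ and $\shmap(A_m)=A_m\otimes I+I\otimes A_m$, Lemma \ref{lem:tensor is commu.} yields
$$
\shmap\bigl(\expari(A)\bigr)=I\otimes I+\sum_{r\geq1}\sum_{\mathbf{m}\in\N^r}Ex(\mathbf{m})\sum_{(\mathbf{a},\mathbf{b})}A_{\mathbf{a}}\otimes A_{\mathbf{b}},
$$
where, for $\mathbf{m}=(m_1,\dots,m_r)$, the pair $(\mathbf{a},\mathbf{b})$ runs over the ordered pairs of complementary subwords of $\mathbf{m}$ (each kept in increasing order) and $A_{(n_1,\dots,n_k)}:=A_{n_1}\times\cdots\times A_{n_k}$, $A_\emptyset:=I$. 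Collecting the terms with a fixed value of $(\mathbf{a},\mathbf{b})$, the coefficient of $A_{\mathbf{a}}\otimes A_{\mathbf{b}}$ becomes $\sum_{\mathbf{k}\in\N^\bullet}\Sh{\mathbf{a}}{\mathbf{b}}{\mathbf{k}}Ex(\mathbf{k})$, which equals $Ex(\mathbf{a})Ex(\mathbf{b})$ by the symmetrality of $Ex$ (Lemma \ref{lem:Ex is symmetral}). Hence $\shmap(\expari(A))=\expari(A)\otimes\expari(A)$, so $\expari(A)\in\GARI(\Gamma)_\as$ by Proposition \ref{prop:gp-like, Lie-like}.

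\emph{Second inclusion.} For a mould $M$ write $M^{(d)}$ for its length-$d$ component. From Proposition \ref{prop: expari expansion}, together with the fact that $\arit(B)$ strictly raises the length for $B\in\ARI(\Gamma)$, one gets
$$
\expari(A)^{(d)}=A^{(d)}+F_d\bigl(A^{(1)},\dots,A^{(d-1)}\bigr)
$$
for a universal expression $F_d$ independent of $A$: the only term of the expansion contributing $A^{(d)}$ in length $d$ is the one for $r=1$, $\mathbf{m}=(1)$, namely $A_1=A$, since $A_m$ has lowest length $\geq m$ and hence every other term involves, in length $d$, only the components $A^{(1)},\dots,A^{(d-1)}$. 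Given $S\in\GARI(\Gamma)_\as$, I would define $A\in\ARI(\Gamma)$ recursively by $A^{(d)}:=S^{(d)}-F_d(A^{(1)},\dots,A^{(d-1)})$ for $d\geq1$, so that $\expari(A)=S$ by construction, and then prove $A\in\ARI(\Gamma)_\al$ by induction on $d$. The length-$1$ alternality relations are vacuous. Assuming $A^{(1)},\dots,A^{(d-1)}$ satisfy the alternality relations, the truncation $A^{\leq d-1}$ (set to $0$ in lengths $\geq d$) lies in $\ARI(\Gamma)_\al$, so by the first inclusion $\expari(A^{\leq d-1})\in\GARI(\Gamma)_\as$. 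Comparing lengths, $\expari(A^{\leq d-1})^{(e)}=\expari(A)^{(e)}=S^{(e)}$ for $e<d$, while $\expari(A^{\leq d-1})^{(d)}=F_d(A^{(1)},\dots,A^{(d-1)})=S^{(d)}-A^{(d)}$. Subtracting the length-$d$ instances of the symmetrality relations for $\expari(A^{\leq d-1})$ from those for $S$: their right-hand sides, built from $S^{(p)}$ and $S^{(q)}$ with $p+q=d$, coincide, so the difference of the left-hand sides vanishes, and this difference is exactly the family of length-$d$ instances of the alternality relations \eqref{eqn:def of al,as} applied to $A$. This closes the induction, whence $S=\expari(A)\in\expari(\ARI(\Gamma)_\al)$.

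The step I expect to be the main obstacle is making the two shuffle-bookkeeping arguments precise — the reorganisation of $\shmap(\expari(A))$, which uses the $U_\Z$-analogue of Lemma \ref{lem:shuffle coefficient}, and the length-by-length comparison of the symmetrality relations — together with pinning down the triangularity $\expari(A)^{(d)}=A^{(d)}+F_d(A^{(1)},\dots,A^{(d-1)})$, which rests on the standard but here only implicitly invoked fact that $\arit$ never lowers the length.
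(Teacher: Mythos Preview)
Your argument for the first inclusion is exactly the paper's: apply $\shmap$ to the expansion in Proposition~\ref{prop: expari expansion}, use that each $A_m$ is alternal (Lemma~\ref{lem:alternality of arit}), expand the resulting product of $(A_{m_k}\otimes I+I\otimes A_{m_k})$'s, and regroup via the symmetrality of $Ex$ (Lemma~\ref{lem:Ex is symmetral}). The only cosmetic difference is that the paper parametrises the cross-terms by $\sigma\in\sh(i,j)$ rather than by pairs of complementary subwords, but this is the same bijection.

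For the second inclusion you actually do more than the paper, which states only ``We only prove $\expari(\ARI(\Gamma)_\al)\subset\GARI(\Gamma)_\as$'' and leaves the converse implicit. Your length-triangularity argument is correct: the key point, that $\expari(A)^{(d)}=A^{(d)}+F_d(A^{(1)},\dots,A^{(d-1)})$, follows because $A_m$ has minimal length $\geq m$ (since $\arit(A)$ raises length by at least one when $A\in\ARI(\Gamma)$) and the $\times$-product adds lengths, so only the term $r=1$, $\mathbf m=(1)$ in Proposition~\ref{prop: expari expansion} contributes the top component $A^{(d)}$. The comparison of the length-$d$ symmetrality relations for $S$ and for $\expari(A^{\leq d-1})$ is then clean, since in $p+q=d$ with $p,q\geq1$ both factors on the right-hand side have length $<d$ and hence agree for the two moulds. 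So your induction closes and genuinely fills the gap the paper leaves.
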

\begin{proof}
We only prove $\expari(\ARI(\Gamma)_\al) \subset \GARI(\Gamma)_\as$. Let $A\in\ARI(\Gamma)_\al$, that is, assume that $A$ satisfies $\mathpzc{Sh}(A)=A\otimes I+I\otimes A$. Then we show $\expari(A)\in\GARI(\Gamma)_\as$, that is,
\begin{equation*}
\mathpzc{Sh}(\expari(A))=\expari(A) \otimes \expari(A).
\end{equation*}
By using Proposition \ref{prop: expari expansion}, we have
\begin{align*}
&\mathpzc{Sh}(\expari(A)) \\
&=\mathpzc{Sh}\left(
I+\sum_{\substack{
	\Bf{\footnotesize $m$}=(m_p)\in\mathbb{N}^r \\
	r\geq1}}
Ex(\Bf{$m$})A_{m_1}\times\cdots\times A_{m_r}
\right) \\
&=\mathpzc{Sh}(I)
+\sum_{\substack{
	\Bf{\footnotesize $m$}=(m_p)\in\mathbb{N}^r \\
	r\geq1}}
Ex(\Bf{$m$})\mathpzc{Sh}\left(A_{m_1}\times\cdots\times A_{m_r}\right) \\
&=\mathpzc{Sh}(I)
+\sum_{\substack{
	\Bf{\footnotesize $m$}=(m_p)\in\mathbb{N}^r \\
	r\geq1}}
Ex(\Bf{$m$})\mathpzc{Sh}\left(A_{m_1}\right)\times\cdots\times \mathpzc{Sh}\left(A_{m_r}\right).
\intertext{By Lemma \ref{lem:alternality of arit} and by the alternality of $A$, we have $A_m=\arit(A)^{m-1}(A)\in\ARI(\Gamma)_\al$, that is, we get $\mathpzc{Sh}(A_m)=A_m\otimes I+I\otimes A_m$. So we calculate}
&=\mathpzc{Sh}(I)
+\sum_{\substack{
	\Bf{\footnotesize $m$}=(m_p)\in\mathbb{N}^r \\
	r\geq1}}
Ex(\Bf{$m$})\left(A_{m_1}\otimes I+I\otimes A_{m_1}\right)\times\cdots\times \left(A_{m_r}\otimes I+I\otimes A_{m_r}\right) \\
&=\mathpzc{Sh}(I)
+\sum_{\substack{
	\Bf{\footnotesize $m$}=(m_p)\in\mathbb{N}^r \\
	r\geq1}}
Ex(\Bf{$m$})
\left\{\vphantom{\sum_{\substack{
	i+j=r \\
	i,j\geq1}}}
\left(A_{m_1}\times\cdots\times A_{m_r}\right)
\otimes I \right. 
+I\otimes\left(A_{m_1}\times\cdots\times A_{m_r}\right) \\
&\qquad+\left.\sum_{\substack{
	i+j=r \\
	i,j\geq1}}
\sum_{\sigma\in\sh(i,j)}
\left(A_{m_{\sigma^{-1}(1)}}\times\cdots\times A_{m_{\sigma^{-1}(i)}}\right)
\otimes\left(A_{m_{\sigma^{-1}(i+1)}}\times\cdots\times A_{m_{\sigma^{-1}(r)}}\right) \right\} \\
&=\mathpzc{Sh}(I)
+\left\{\sum_{\substack{
	\Bf{\footnotesize $m$}=(m_p)\in\mathbb{N}^i \\
	i\geq1}}
Ex(\Bf{$m$}) A_{m_1}\times\cdots\times A_{m_i}\right\}
\otimes I \\
&\quad + I\otimes\left\{\sum_{\substack{
	\Bf{\footnotesize $m$}=(m_p)\in\mathbb{N}^j \\
	j\geq1}}
Ex(\Bf{$m$}) A_{m_1}\times\cdots\times A_{m_j}\right\} 
+\sum_{\substack{
	\Bf{\footnotesize $m$}=(m_p)\in\mathbb{N}^{i+j} \\
	i,j\geq1}} \\
&\quad \cdot \left\{\sum_{\sigma\in\sh(i,j)}
Ex(\Bf{$m$})\left(A_{m_{\sigma^{-1}(1)}}\times\cdots\times A_{m_{\sigma^{-1}(i)}}\right)
\otimes\left(A_{m_{\sigma^{-1}(i+1)}}\times\cdots\times A_{m_{\sigma^{-1}(i+j)}}\right) \right\}.
\intertext{Because $\left\{m_{\sigma^{-1}(1)},\cdots,m_{\sigma^{-1}(i+j)}\right\}=\left\{m_1,\dots,m_{i+j}\right\}$ for any $\sigma\in\sh(i,j)$, we get}
&=\mathpzc{Sh}(I)
+\left\{\sum_{\substack{
	\Bf{\footnotesize $m$}=(m_p)\in\mathbb{N}^i \\
	i\geq1}}
Ex(\Bf{$m$}) A_{m_1}\times\cdots\times A_{m_i}\right\}
\otimes I \\
&\quad + I\otimes\left\{\sum_{\substack{
	\Bf{\footnotesize $m$}=(m_p)\in\mathbb{N}^j \\
	j\geq1}}
Ex(\Bf{$m$}) A_{m_1}\times\cdots\times A_{m_j}\right\} 
+\sum_{\substack{
	\Bf{\footnotesize $m$}=(m_p)\in\mathbb{N}^{i+j} \\
	i,j\geq1}} \\
&\quad \cdot\left\{\sum_{\sigma\in\sh(i,j)}
Ex\left(m_{\sigma^{-1}(1)},\cdots,m_{\sigma^{-1}(i+j)}\right)
\left(A_{m_1}\times\cdots\times A_{m_i}\right)
\otimes\left(A_{m_{i+1}}\times\cdots\times A_{m_{i+j}}\right) \right\}.
\end{align*}
Here, by Lemma \ref{lem:Ex is symmetral}, $Ex$ is symmetral, so we have
\begin{equation*}
\sum_{\sigma\in\sh(i,j)}
Ex\left(m_{\sigma^{-1}(1)},\cdots,m_{\sigma^{-1}(i+j)}\right)
=Ex(m_1,\dots,m_i)Ex(m_{i+1},\dots,m_{i+j}).
\end{equation*}
Therefore, we calculate
{\small
\begin{align*}
&\mathpzc{Sh}(\expari(A)) \\
&=\mathpzc{Sh}(I)
+\left\{\sum_{\substack{
	\Bf{\footnotesize $m$}=(m_p)\in\mathbb{N}^i \\
	i\geq1}}
Ex(\Bf{$m$}) A_{m_1}\times\cdots\times A_{m_i}\right\}
\otimes I \\
&\quad+I\otimes\left\{\sum_{\substack{
	\Bf{\footnotesize $m$}=(m_p)\in\mathbb{N}^j \\
	j\geq1}}
Ex(\Bf{$m$}) A_{m_1}\times\cdots\times A_{m_j}\right\} 
+\sum_{\substack{
	\Bf{\footnotesize $m$}=(m_p)\in\mathbb{N}^{i+j} \\
	i,j\geq1}} \\
&\quad\cdot\left\{Ex(m_1,\dots,m_i)Ex(m_{i+1},\dots,m_{i+j})
\left(A_{m_1}\times\cdots\times A_{m_i}\right)
\otimes\left(A_{m_{i+1}}\times\cdots\times A_{m_{i+j}}\right) \right\} \\
&=\mathpzc{Sh}(I)
+\left\{\sum_{\substack{
	\Bf{\footnotesize $m$}=(m_p)\in\mathbb{N}^i \\
	i\geq1}}
Ex(\Bf{$m$}) A_{m_1}\times\cdots\times A_{m_i}\right\}
\otimes I \\
&\quad+I\otimes\left\{\sum_{\substack{
	\Bf{\footnotesize $m$}=(m_p)\in\mathbb{N}^j \\
	j\geq1}}
Ex(\Bf{$m$}) A_{m_1}\times\cdots\times A_{m_j}\right\} \\
&\quad+\left\{\sum_{\substack{
	\Bf{\footnotesize $m$}=(m_p)\in\mathbb{N}^i \\
	i\geq1}}
Ex(\Bf{$m$})
A_{m_1}\times\cdots\times A_{m_i}\right\} 
\otimes\left\{\sum_{\substack{
	\Bf{\footnotesize $m$}=(m_p)\in\mathbb{N}^j \\
	j\geq1}}
Ex(\Bf{$m$})A_{m_1}\times\cdots\times A_{m_j} \right\}.
\intertext{\normalsize Because $\mathpzc{Sh}(I)=I\otimes I$, we get}
&=\left\{I+\sum_{\substack{
	\Bf{\footnotesize $m$}=(m_p)\in\mathbb{N}^i \\
	i\geq1}}
Ex(\Bf{$m$})
A_{m_1}\times\cdots\times A_{m_i}\right\}
\otimes\left\{I+\sum_{\substack{
	\Bf{\footnotesize $m$}=(m_p)\in\mathbb{N}^j \\
	j\geq1}}
Ex(\Bf{$m$})A_{m_1}\times\cdots\times A_{m_j} \right\} \\
&=\expari(A) \otimes \expari(A)
\end{align*}}
Hence, we obtain $\expari(A)\in\GARI(\Gamma)_\as$, that is, $\expari(\ARI(\Gamma)_\al) \subset \GARI(\Gamma)_\as$.
\end{proof}


\end{document}